\documentclass{amsart}
\usepackage{amssymb}
\usepackage[mathscr]{eucal}
\usepackage{eufrak}
\usepackage{xspace}
\newcommand{\R}{\mathbb{R}}

\newcommand{\Z}{\mathbb{Z}}
\newcommand{\N}{\mathbb{N}}
\newcommand{\C}{\mathbb{C}}
\newcommand{\LL}{\mathcal{L}}

\newcommand{\ve}{\varepsilon}

\newcommand{\loc}{{\text{\rm loc}}}
\newcommand{\X}{\times}

\newcommand{\del}{{\partial}}

\renewcommand{\d}{\delta}
\renewcommand{\l}{\lambda}

\renewcommand{\a}{\alpha}
\renewcommand{\b}{\beta}

\newcommand{\s}{\sigma}
\newcommand{\g}{\gamma} 
\newcommand{\z}{\zeta}
\renewcommand{\k}{\kappa}

\newcommand{\Om}{\Omega}
\newcommand{\om}{\omega}

\newcommand{\supp}{\text{\rm supp}\,}

\renewcommand{\div}{\text{\rm div}}

\renewcommand{\supp}{\text{\rm supp}\,}


\newcommand{\m}{\mathbf{m}}



\newcommand{\cE}{{\mathcal E}}
\newcommand{\cL}{{\mathcal L}}

\newcommand{\bbf}{\mathbf{b}}
\newcommand{\abf}{\mathbf{a}}

\newcommand{\Abf}{\mathbf{A}}
\newcommand{\Bbf}{\mathbf{B}}

\newcommand{\Sbf}{\mathbf{S}}

\newcommand{\Sbb}{\mathbb S}

\newcommand{\bbE}{{\mathbb E}}
\newcommand{\bbT}{{\mathbb T}}
\newcommand{\bbP}{{\mathbb P}}

\newcommand{\dist}{\text{dist\,}}


\theoremstyle{plain}
\newtheorem{theorem}{Theorem}[section]
\newtheorem{corollary}{Corollary}[section]

\newtheorem{lemma}{Lemma}[section]
\newtheorem{proposition}{Proposition}[section]
\theoremstyle{definition}
\newtheorem{definition}{Definition}[section]
\theoremstyle{remark}

\newtheorem{remark}{Remark}[section]
\numberwithin{equation}{section}
\usepackage{mathtools}

\newcommand{\wt}{\widetilde}
\newcommand{\ff}{\mathfrak{f}}
\newcommand{\bb}{\mathfrak{b}}

\newcommand{\qq}{\mathfrak{q}}
\newcommand{\gf}{\mathfrak{g}}
\newcommand{\FF}{\mathfrak{F}}
\newcommand{\vf}{\mathfrak{v}}

\newcommand{\rr}{\mathfrak{r}}
\newcommand{\sg}{\mathfrak{s}}
\newcommand{\cc}{\mathfrak{c}}

\newcommand{\hh}{\mathfrak{h}}

\newcommand{\meas}{{\text{\rm meas}}}
\newcommand{\nn}{\mathfrak{n}}
\newcommand{\dd}{\mathfrak{d}}

\newcommand{\zz}{\mathfrak{z}}
\usepackage{mathrsfs}
\newcommand{\Cc}{\mathscr{C}}
\newcommand{\Dd}{\mathscr{D}}
\newcommand{\Ss}{\mathscr{S}}
\newcommand{\Ff}{\mathscr{F}}
\newcommand{\lf}{\mathfrak{l}}
\newcommand{\af}{\mathfrak{a}}
\newcommand{\pp}{\mathfrak{p}}

\newcommand{\JJ}{\mathscr{J}}

\author[J. F. Nariyoshi]{Jo\~ao Fernando ~Nariyoshi}
\address{Instituto de Matem\'atica Pura e Aplicada - IMPA\\ Estrada Dona Castorina, 110\\
	Rio de Janeiro, RJ, 22460-320, Brazil}
\email{jfcn@impa.br}

\title{Critical velocity averaging lemmas}

\begin{document}

\begin{abstract}
	We prove new velocity averaging lemmas for multi-dimensional hyperbolic-parabolic partial differential equations. These theorems may be applied to establish several compactness results for both deterministic and stochastic convection-diffusion equations. Among the strengths of our theory is the criticality of the source term, which may include spatial derivatives of second order and stochastic noises. 
\end{abstract}

\maketitle

\section{Introduction}

We are concerned with the relative compactness of the so-called \textit{velocity averages}
\begin{equation}
\int_\R f(t,x,v)\eta(v)\,dv, \label{0.-1}
\end{equation}
where $\eta : \R \to \R$ is a weight function, $(t,x,v) \in \R\X\R^N\X\R$, and $f(t,x,v)$ obeys a second-order multidimensional parabolic-hyperbolic equation of the general form
\begin{equation}
\frac{\del f}{\del t} + \abf(v) \cdot \nabla_x f - \div_x (\bbf(v) \nabla_x f) = \Lambda + \Phi \, \frac{dW}{dt}, \label{0.0}
\end{equation}
in which $\abf : \R \to \R^N$ is a convection vector function, $\bbf : \R \to \mathscr{L}(\R^N)$ is a nonnegative diffusion matrix function, $\Lambda(t,x,v)$ is a tempered distribution, $W(t)$ is a cylindrical Wiener process, and $\Phi(t,x,v)$ are diffusion coefficients.

Theorems in this direction, traditionally named \textit{velocity averaging lemmas}---or just \textit{averaging lemmas}---, are of great interest to the discipline of both deterministic and stochastic degenerate convection-diffusion equations such as
\begin{equation}
\frac{\del \varrho}{\del t} + \sum_{j = 1}^N \frac{\del}{\del x_j} \Abf_j(\varrho) - \sum_{j,k=1}^N \frac{\del^2}{\del x_j \del x_k} \Bbf_{jk}(\varrho) = \mathbf S(\varrho), \label{0.1}
\end{equation}
where $\varrho(t,x) \in \R$ is the unknown field, $\Abf = (\Abf_1, \ldots, \Abf_N): \R \to \R^N$ is a flux function, $\Bbf = (\Bbf_{jk})_{1 \leq j,k \leq N} : \R \to \mathscr{L}(\R^N)$ is such that $\Bbf'(v) \geq 0$ everywhere, and $\Sbf$ represents a forcing source term. Indeed, although there does not exist smooth solutions to Equation \eqref{0.1} and its weak solutions lack uniqueness properties in general, one can introduce the concept of an entropy solution, which, besides allowing one to develop a satisfactory mathematical theory, can be shown to be physically relevant; see, e.g., \textsc{S. N. Kruzkov} \cite{K} and \textsc{J. Carrillo} \cite{C}. This last characteristic is profoundly significant, once this equation and its variants model several important natural phenomena, including sedimentation--consolidation processes (see, e.g., \textsc{M. C. Bustos} \textit{et al.} \cite{BCBT}), the fluid motion through a porous medium (see, e.g., \textsc{G. Chavent--J. Jaffre} \cite{CJ}, and \textsc{J. L. Vazquez} \cite{Vz}), etc. Furthermore, from a theoretical standpoint, it is well-known since the groundbreaking works of \textsc{P.-L. Lions--B. Perthame--E. Tadmor} \cite{LPT} and \textsc{G.-Q. Chen--B. Perthame} \cite{CP} (see also \textsc{A. Debussche--M. Hofmanová--J. Vovelle} \cite{DHV}) that this entropy property in particular also entails that Equation \eqref{0.1} enjoys a kinetic notion of a solution. In this kinetic formulation, one defines a new variable $v \in \R$---commonly known as \textit{velocity}---such that, under a change of variables $\varrho \mapsto f$, $f(t,x,v)$ now solves an equation like \eqref{0.0} with $\abf(v) = \Abf'(v)$ and $\bbf(v) = \Bbf'(v)$. For $\varrho(t,x)$ may be then reconstructed from $f(t,x,v)$ via the integral
$$\varrho(t,x) = \int_\R f(t,x,v) \,dv,$$
one can thus deduce nontrivial regularity properties of the original solutions $\varrho$ by means of velocity averaging lemmas. 

So as to illustrate this point, the argument just delineated allowed several authors to successfully establish
\begin{enumerate}
	\item[(a)] the existence of solutions employing the vanishing viscosity method (see, e.g., \textsc{R. Bürgers--H. Frid--K. H. Karlsen} \cite{BFK}, \textsc{H. Frid}--\textsc{Y. Li} \cite{FL}, \textsc{B. Gess--M. Hofmanová} \cite{GH}),
	\item [(b)] the strong trace property (see, e.g., \textsc{A. Vasseur} \cite{Va}, \textsc{Y.-S. Kwon--A. Vasseur} \cite{KV}, \textsc{H. Frid--Y. Li} \cite{FL}, and \textsc{H. Frid} \textit{et al.} \cite{FL1}), which is crucial to prove the uniqueness of solutions to many initial-boundary value problems (see the discussion in \cite{Va}),
	\item [(c)] the existence of an asymptotic state (see, e.g., \textsc{G.-Q. Chen--H. Frid} \cite{CF0', CF0}, and \textsc{G.-Q. Chen--B. Perthame} \cite{CP1}),
	\item [(d)] the Sobolev regularity of entropy solutions (see, e.g., \textsc{P.-L. Lions--B. Perthame--E. Tadmor} \cite{LPT}, \textsc{T. Tao--E. Tadmor} \cite{TT}, \textsc{B. Gess--M. Hofmanová} \cite{GH}, \textsc{B. Gess--X. Lamy} \cite{GL}, \textsc{B. Gess} \cite{Ge}, and \textsc{B. Gess--J. Sauer--E. Tadmor} \cite{GST}),
\end{enumerate}
among other propositions. Specially, averaging lemmas are valuable when studying boundary value problems, once in this setting one may not be able to prove akin compactness results via the classical arguments of $L^1$--contractivity.

Velocity averaging lemmas possess a rich, albeit relatively short history, beginning with the original works of \textsc{V. I. Agoshkov} \cite{A} and \textsc{C. Bardos} \textit{et al.} \cite{BGPS} on transport equations. Later on, these averaging lemmas for transport equations were further delved into by \textsc{F. Golse} \textit{et al.} \cite{GLPS}, \textsc{R. J. DiPerna--P.-L. Lions} \cite{DL1, DL2} (with applications to the Boltzmann and Vlasov--Maxwell equations), \textsc{R. J. DiPerna--P.-L. Lions--Y. Méyer} \cite{DLM} (with a general, noncritical source term in $L^p$), \textsc{M. Bézard} \cite{B} and \textsc{P.-L. Lions} \cite{Li} (both of the latter studying optimal regularity in Sobolev spaces), \textsc{B. Perthame--P. Souganidis} \cite{PS} (with a general critical source term in $L^p$), \textsc{R. DeVore--G. Petrova} \cite{DP} (establishing optimal regularity in Besov spaces), \textsc{L. Saint-Raymond} \cite{SR} and \textsc{F. Golse--L. Saint-Raymond} \cite{GSR1, GSR2} (in an $L^1$--framework and with important consequences to the Navier--Stokes equations), \textsc{P.-E. Jabin--H.-Y. Lin--E. Tadmor} \cite{JLT} (using commutator techniques), and \textsc{D. Arsénio--N. Lerner} \cite{AL} (employing an energy method).

The first applications to nonlinear conservation laws were given by \textsc{P.-L. Lions--B. Perthame--E. Tadmor} \cite{LPT} with the introduction of the celebrated kinetic formulation. Their results were subsequently extended by the aforementioned work of \textsc{B. Perthame--P. Souganidis} \cite{PS}, \textsc{P.-E. Jabin--B. Perthame} \cite{JP} (see also \textsc{P.-E. Jabin--L. Vega} \cite{JV1, JV2} for a similar theorems), \textsc{M. Westdickenberg} \cite{W}, and \textsc{F. Berthelin--S. Junca} \cite{BJ}, just to name a few.

Let us also point out that an $L^2$--theory of averaging lemmas for general partial differential operators was developed by \textsc{P. Gérard} \cite{G1,G2} and \textsc{P. Gérard--F. Golse} \cite{GG}  using techniques of $H$-measures (see also \textsc{M. Lazar--D. Mitrović} \cite{LM}). Additionally, it is equally worth mentioning the applications of velocity averaging lemmas to numerical schemes by \textsc{L. Desvillettes--S. Mischler} \cite{DM}, \textsc{S. Mischler} \cite{M}, \textsc{F. Bouchut--L. Desvillettes} \cite{BD}, \textsc{T. Horsin--S. Mischler--A. Vasseur} \cite{HMV}, and \textsc{N. Ayi--T. Goudon} \cite{AG}.

The vast majority of the aforesaid works were restricted to first-order equations, with notable exceptions being: some statements in \textsc{P.-L. Lions--B. Perthame--E. Tadmor} \cite{LPT} regarding hyperbolic-parabolic equations, the abstract theory of \textsc{P. Gérard} \cite{G1,G2} with \textsc{F. Golse} \cite{GG}, and the parabolic averaging lemma of \textsc{M. Lazar--D. Mitrović} \cite{LM}. Indeed, the study of velocity averaging lemmas for convection-diffusion equations has contrastingly a much smaller body of literature and is largely influenced by the towering theory of \textsc{E. Tadmor--T. Tao} \cite{TT}. Their results delved into the Sobolev regularity of entropy solutions to such second-order equations, and they were based on dyadic partitions of the frequency space in terms of the Littlewood--Paley decomposition and the symbol of \eqref{0.0}
\begin{equation}
\LL(i\tau, i\k, v) \stackrel{\text{def}}{=} i(\tau + \abf(v)\cdot\k) + \k\cdot\bbf(v)\k. \label{2.defL}
\end{equation}
Consequently, in order to ensure the convergence of such expansions, it was necessary to impose uniform decay rates on the quantities
\begin{equation}
\omega(J; \d) = \sup_{\sqrt{\tau^2 + |\k|^2}\sim J} \meas\Big\{ v \in \supp \eta; |\LL(i\tau, i\k, v)| \leq\d \Big\}. \label{0.2}
\end{equation}
By carefully studying the $L^r$--norm of these parcels, one could then verify the $W^{s,r}$--regularity of the averages \eqref{0.-1}. This method was further expanded in a series of works by \textsc{B. Gess--M. Hofmanová} \cite{GH} (with applications to stochastic quasilinear degenerate hyperbolic-parabolic equations), \textsc{B. Gess--X. Lamy} \cite{GL} (studying a conservation law with sources), \textsc{B. Gess} \cite{Ge} and \textsc{B. Gess--J. Sauer--E. Tadmor} \cite{GST} (both of the latter establishing the optimal Sobolev regularity for the porous medium equation).

Despite the impressive power and elegance of such an approach, it is not without a few shortcomings. We enumerate some below.
\begin{itemize}
	\item First of all, except for some elementary examples, the examination of the quantities \eqref{0.2} is somewhat laborious and so far have led only to partial results. For instance, concerning the simple parabolic-hyperbolic equation in $\R_t\X\R_x\X\R_y$
	\begin{equation}
	\frac{\del \varrho}{\del t} + \frac{\del}{\del x} \bigg\{ \frac{1}{\ell+1} \varrho^{\ell+1} \bigg\} - \frac{\del^2}{\del y^2} \bigg\{   \frac{1}{n+1} |\varrho |^{n+1} \bigg\} = 0, \label{0.3}
	\end{equation}
	where $n$ and $\ell$ are positive integers, their theory has as yet been shown to be applicable under the restriction that $n \geq 2\ell$ (see \textsc{E. Tadmor--T. Tao} \cite{TT}).
	\item As the behavior of the symbol $\LL(i\tau, i\k, v)$ is only treated obliquely via the quantities \eqref{0.2}, it is not clear which class of tempered distributions $\Lambda(t,x,v)$ is admissible in the right-hand side of \eqref{0.0}. (An exception to this is the recent work of \textsc{B. Gess--J. Sauer--E. Tadmor} \cite{GST}, who do not avoid the nonhomogeneity of $\LL(i\tau, i\k, v)$. Nevertheless, their method is quite complex and restricted to a particular type of equation.)
	\item Likewise, it is not clear if the \textsc{Tadmor--Tao} theory permits the diffusion matrix $\bbf(v)$ to degenerate on intervals, allowing Equation \eqref{0.0} to display a hyperbolic and a parabolic phase. This hypothesis is not of complete superficiality, as it appears naturally in applications to sedimentation-consolidation processes (see \textsc{M. C. Bustos} \textit{et al.} \cite{BCBT}).
\end{itemize}

The purpose of this manuscript is to present a new theory of averaging lemmas which overcomes the difficulties previously listed. The most interesting features of our method include:
\begin{enumerate}
	\item [(i)] The nondegeneracy conditions we consider are inspired by those introduced by \textsc{P.-L. Lions--B. Perthame--E. Tadmor} \cite{LPT}, once they are variants of
	\begin{align}
	\textnormal{``}\meas\big\{ v \in \supp \eta;\, \LL(i\tau, i\k, v) = 0 \big\} = 0 &\textnormal{ for all $(\tau, \k) \in \R\X\R^N$} \nonumber \\&\textnormal{ with $\tau^2 + |\k|^2 = 1$''.} \label{0.4}
	\end{align}
	As a consequence, they are of substantially easier verification;
	\item [(ii)] The distributions $\Lambda(t,x,v)$ appearing in \eqref{0.0} are allowed to have the form $\mathscr E(-\Delta_v + 1)^{\ell/2}g$, where $\ell \geq 0$, $g \in L^q(\R_t\X\R_x^N\X\R_v)$ ($1<q<\infty$), and $\mathscr E$ is an elliptic operator that ``tightly'' dominates $\LL\big(\frac{\del}{\del t}, \nabla_x, v\big)$. In particular, they can always involve full spatio-temporal derivatives of $g$, and they may contain second-order spatial derivatives of $g$ if $\LL\big(\frac{\del}{\del t}, \nabla_x, v\big)$ is parabolic for that particular velocity $v$, hence the ``criticality'' of our averaging lemmas. Accordingly, one gains an ample notion of the regularizing properties associated to the averaging process $f \mapsto \int_\R f\eta\,dv$;
	\item [(iii)] The proof is straightforward and transparent. Indeed, our arguments are based in the elementary method of \textsc{H. Frid} \textit{et al.} \cite{FL1} (see also \textsc{G.-Q. Chen--H. Frid}  \cite{CF0'}, and the averaging lemma 2.1 in \textsc{E. Tadmor--T. Tao}), but they contain refinements in every aspect. 
\end{enumerate}
Although the velocity averaging lemmas we will introduce are regarding the relative compactness of the averages \eqref{0.-1}, their reasoning may be altered so as to analyze the quantitative regularity of these averages as well. Thus, due to their simplicity, we reckon that our ideas may also elucidate the intricacies of the previous works on the subject.

\begin{remark}
	The results of this text were mainly motivated by the problem of proving the strong trace property for entropy solutions to stochastic parabolic-hyperbolic equations closely resembling \eqref{0.3}. This problem was successfully solved with \textsc{H. Frid} \textit{et al.} \cite{FL2} via the techniques of this manuscript (see also the revised version of \textsc{H. Frid--Y. Li} \cite{FL1}). Moreover, other applications of our averaging lemmas were also derived: in \cite{N1}, we introduced a method of establishing the well-posedness to initial-boundary value problems for stochastic scalar conservation laws, and we also investigated the Sobolev regularity of the solutions obtained therein; and, in \cite{N2}, we deduced simple criteria for the relatively compactness of entropy and kinetic solutions to deterministic convection-diffusion equations. Several other applications are to be explored in the future.
\end{remark}

\section{Main results}

\subsection{An illustrative example} \label{0.example}
Before properly stating our theorems, it is convenient to briefly look into a unidimensional model which not only explains our hypotheses, but also portrays the general principles behind our theory.

Suppose that $N = 1$, and, for all $n \in \N$, the equation
\begin{equation}
\frac{\del f_n}{\del t} + v \frac{\del f_n}{\del x} - \frac{\del}{\del x} \bigg( \bbf(v) \frac{\del f_n}{\del x} \bigg) = (-\Delta_{t,x})^{1/2}\frac{\del g_n}{\del v} \label{0.6.5}
\end{equation}
is satisfied in $\mathscr D'(\R_t\X\R_x\X\R_v)$, where $(f_n)_{n \in \N}$ is a bounded sequence in $L^2(\R_t\X\R_x\X\R_v)$, $(g_n)_{n \in \N}$ converges to zero in $L^2(\R_t\X\R_x\X\R_v)$, and $\bbf : \R \to \R$ is a smooth, nonnegative function. Our desire is to show that, given any weight function $\eta \in \Cc_c^\infty(\R_v)$, the averages $\int_{\R} f_n \eta\, dv $ are relatively compact in $L_\loc^2(\R_t\X\R_x^N\X\R_v)$.

Notice that one may assume that $f_n \rightharpoonup f$ weakly in $\s(L_{t,x,v}^2; L_{t,x,v}^2)$; in this case, the weak limit $f(t,x,v)$ surely obeys the equation $$\frac{\del f}{\del t} + v \frac{\del f}{\del x} - \frac{\del}{\del x} \bigg(\bbf(v)\frac{\del f}{\del x} \bigg) = 0.$$ Since $f \in L^2(\R_t\X\R_x^N\X\R_v)$, one may apply the classical techniques of Sobolev spaces and semigroups of operators to deduce that $f \equiv 0$ in the $L^2$--sense. As a result, it is clear that, if $\int_{\R} f_n \eta\, dv $ is relatively compact, then it converges \textit{a fortiori} to $0$ in $L_\loc^2$.

The traditional argument in the theory of the averaging lemmas is roughly as follows. If $\FF_{t, x}$ denotes the Fourier transform in $(t,x)$, it can be seen that
$$\big( i(\tau + v\k) + \bbf(v)\k^2 \big) (\FF_{t, x} f_n) = \sqrt{\tau^2 + |\k|^2}\frac{\del}{\del v} (\FF_{t, x} g_n).$$
This formula is very meaningful if $\LL(i\tau, i\k, v) = i(\tau + v\k) + \bbf(v)\k^2$ is not too small, as one may then formally divide the equation by $\LL(i\tau, i\k, v)$. In order to discern when $\LL(i\tau, i\k, v)$ is acceptably far away from zero, let $(\tau', \k')$ denote the normalized frequency
\begin{equation}
(\tau', \k') = \frac{1}{\sqrt{\tau^2 + |\k|^2}}(\tau,\k) \label{2.deftauk'}
\end{equation}
for $(\tau, \k) \neq 0$, and introduce some $\psi \in \Cc^\infty_c(\C; \R)$ such that $\psi(z) = 0$ for $|z| < 1/2$ and $\psi(z) = 1$ for $|z| > 1$. Then, for any $0 < \g$ and $\d < 1$, one may decompose $f_n$
as
$$f_n \stackrel{\text{def}}{=} f_n^{(1)} + f_n^{(2)} + f_n^{(3)},$$
where
\begin{align*}
(\FF_{t,x} f_n^{(1)})(\tau, \k, v) &\stackrel{\text{def}}{=} (1-\psi)\bigg( \frac{\sqrt{\tau^2 + |\k|^2}}{\g} \bigg) (\FF_{t,x} f_n)(\tau, \k, v), \\
(\FF_{t,x} f_n^{(2)})(\tau, \k, v) &\stackrel{\text{def}}{=} \psi\bigg( \frac{\sqrt{\tau^2 + |\k|^2}}{\g} \bigg) (1- \psi)\bigg( \frac{\LL(i\tau', i\k', v)}{\d} \bigg) (\FF_{t,x} f_n)(\tau, \k, v), \text{ and} \\
(\FF_{t,x} f_n^{(3)})(\tau, \k, v) &\stackrel{\text{def}}{=} \psi\bigg( \frac{\sqrt{\tau^2 + |\k|^2}}{\g} \bigg)  \psi \bigg( \frac{\LL(i\tau', i\k', v)}{\d} \bigg) (\FF_{t,x} f_n)(\tau, \k, v).
\end{align*}
One may interpret this division as follows. $f_n^{(1)}$ is formed by the low-frequencies of $f_n$, wherefore it is naturally well-behaved (recall, for instance, the Paley--Wiener theorem). On the other hand, $f_n^{(2)}$ is the part of $f_n$ that is supported where $|\LL(i\tau', i\k', v)|$ is small, and thus its average may be uniformly handled thanks to the nondegeneracy condition \eqref{0.4} (hence the necessity of such a hypothesis).  Observe that $\LL(i\tau, i\k, v)$ verily satisfies \eqref{0.4}, for its hyperbolic part $(\tau, \k, v) \mapsto i(\tau + v\k)$ certainly does.

At last, the remainder term, $f_n^{(3)}$, is the parcel of $f_n$ located in the high frequencies for which $|\LL(i\tau', i\k', v)| \geq \d/2$. Therefore, it may be analyzed through the differential equation \eqref{0.6.5}, in the sense that
\begin{align}
(\FF_{t,x} f_n^{(3)}) =  \psi\bigg( \frac{\sqrt{\tau^2 + |\k|^2}}{\g} \bigg)\psi\bigg( \frac{\LL(i\tau', i\k', v)}{\d} \bigg) \frac{\sqrt{\tau^2 + |\k|^2}}{\LL(i\tau, i\k,v)}\frac{\del}{\del v} (\FF_{t, x} g_n). \label{0.6}
\end{align}
As we argued, this is the sole element one should be preoccupied with, consequently we will only pay attention to it for now. Multiplying \eqref{0.6} by $\eta(v)$ and integrating in $v \in \R_v$ implies that
\begin{align}
\FF_{t,x} \bigg(& \int_\R f_n^{(3)} \eta\, dv \bigg) = \nonumber \\ & - \int_{\R}\psi\bigg( \frac{\sqrt{\tau^2 + |\k|^2}}{\g} \bigg) \frac{\del}{\del v} \bigg\{  \eta(v) \psi\bigg( \frac{\LL(i\tau', i\k', v)}{\d} \bigg) \bigg\} \frac{\sqrt{\tau^2 + |\k|^2}}{\LL(i\tau, i\k, v)} (\FF_{t, x} g_n)\, dv \nonumber \\ & - \int_\R \eta(v) \psi\bigg( \frac{\sqrt{\tau^2 + |\k|^2}}{\g} \bigg) \psi\bigg( \frac{\LL(i\tau', i\k', v)}{\d} \bigg) \frac{\del}{\del v} \bigg\{   \frac{\sqrt{\tau^2 + |\k|^2}}{\LL(i\tau, i\k, v)} \bigg\}  (\FF_{t, x} g_n)\, dv. \label{0.7}
\end{align}
On the strength of the Plancherel theorem, and the Cauchy--Schwarz inequality,
$$\int_{\R_{t,x}^2}\bigg|\int_{\R_v} \Lambda(t,x,v) \phi(v)\,dv \bigg|^2\,dxdt \leq \bigg(\int_{\R_v} |\phi(v)|^2\,dv\bigg)\bigg(\int_{\R_{t,x,v}^3} |\Lambda(t,x,v)|^2\,dvdxdt \bigg) $$
and the assumption that $g_n \to 0$ in $L_{t,x,v}^2$, it is not difficult to see that, so as to guarantee that $\int_\R f_n^{(3)} \eta \, dv$ also converges to $0$, it suffices to establish that
\begin{align} \label{0.8}
\begin{dcases}
\sqrt{\tau^2 + |\k|^2} \leq C |\LL(i\tau, i\k, v) |, \text{ and} \\
|\LL_v(i\tau, i\k, v) | \leq C |\LL(i\tau, i\k, v)|
\end{dcases}
\end{align}
where $\LL_v(i\tau, i\k, v) = \frac{\del \LL}{\del v}(i\tau, i\k, v)$, $(\tau, \k) \in \mathscr B(v) = \big\{ |\LL(i\tau', i\k', v)| \geq \d/2 \} \cap \{ \sqrt{\tau^2 + |\k|^2} \geq \g/2\big\}$, and $v \in \supp \eta$.

Due to the restriction $(\tau, \k) \in \mathscr B(v)$, the first inequality \eqref{0.8} follows quite easily. Moreover, if $\bbf(v) \equiv 0$ for such $v$'s (i.e., the equation is hyperbolic in the support of $\eta$), the second inequality is equally trivialized, for it then becomes a relation between two homogeneous functions of degree $0$.

On the other hand, if $\bbf(v) \not \equiv 0$, the second desired estimate becomes much more delicate. For the sake of the argument, let us assume that $\bbf(v) = v^2$, so that $\LL_v(i\tau, i\k, v) = iv\k + 2v \k^2$. Thus, choosing $(\tau', \k')$ such that $\tau'$ is very close to $1$ (forcing $|\LL(i\tau', i\k', v)|$ to be close to $1$ as well), and $v \neq 0$, one can infer that
$$\sup_{(\tau, \k) \in \mathscr B(v)}\bigg| \frac{\LL_v(i\tau, i\k, v)}{\LL(i\tau, i\k, v)}\bigg| \geq \frac{2}{|v|},$$
which becomes very singular when $v$ approaches the origin. As a corollary, \eqref{0.8} is not feasible if $0 \in \supp \eta$. 

Nevertheless, this complicating velocity is just a single point. Thus, one can truncate the weight function $\eta$ near it, and indeed \eqref{0.8} would hold. The residual term, composed by the velocities neighboring $0$, can be made uniformly small due to $L^2$--boundness of $f_n$. As a consequence, one may argue that $\int_\R f_n \eta\, dv \to 0$ in $L_\loc^2$, as we wanted to show.

The issue above and its resolution indicate that solely employing the quantity $\LL(i\tau', i\k', v)$ may not be adequate to measure the degeneracy of equation \eqref{0.6.5} when $\bbf(v) \not\equiv 0$. In reality, the heart of the matter in the parabolic case is not that one should select the non-degenerate directions of $\LL(i\tau, i\k, v)$, but that one should ensure that $\LL(i\tau, i\k, v)$ behaves like the heat equation symbol $\mathcal C(\tau, \k) = i\tau + |\k|^2$. If this property is secured, not only can one bound $\LL_v(i\tau, i\k, v)$, but also one may then control a stronger operator than $(-\Delta_{t,x})^{1/2}$: one may indeed substitute $(-\Delta_{t,x})^{1/2}$ for $(-\Delta_{t,x} + 1)^{1/2} - \Delta_x$, an elliptic operator that ``tightly'' dominates $\mathcal C(\tau, \k)$.

Furthermore, this toy model also suggests the following method for investigating \eqref{0.6.5} with a general $\bbf(v)$. One separates $\R_v$ into two subsets: the one where $\bbf(v) \equiv 0$ identically, and the one where $\bbf(v) > 0$. In the former, one can apply the simple argument of the hyperbolic case, whereas, in the latter, provided that one remains bounded away from $\{\bbf(v) = 0\}$, the argument for $\bbf(v) = v^2$ would hold fine. Then, assuming that the set where \eqref{0.6.5} mutates from a ``hyperbolic'' phase to a ``parabolic'' one---or vice versa---is ``small'', this agglutination would recover the complete average $\int_\R f_n \eta \, dv$, thence showing its convergence to $0$ in $L_\loc^2$. Theorems \ref{1.firstthm} and \ref{1.secondthm} of this paper investigate this reasoning.

Notwithstanding, if $\bbf(v)$ does not degenerate in entire intervals but only in null sets (as, e.g., $\bbf(v) = v^2$), a considerably better manner to evaluate the behavior of $\LL(i\tau, i\k, v)$ would be to employ
\begin{equation}
\psi\bigg( \frac{\text{real part of } \LL(i\tau, i\k, v)}{\d|\k|^2} \bigg) = \psi\bigg(\frac{\bbf(v)}{\d}\bigg), \label{0.9}
\end{equation}
as this function deftly measures the diffuseness of $\LL(i\tau, i\k, v)$. Notice that, when $\bbf(v) = v^2$, $\psi(\bbf(v)/\d)$ only truncates the velocities near $0$, exactly as we have argued before. This hypothesis on the set of degeneracy of $\bbf(v)$, which fundamentally says that Equation \ref{0.6.5} possesses one unique regime (as opposed to the previous scenario), is considered in depth in Theorems \ref{1.firstparthm} and \ref{1.secondparthm}.

One central matter we have not touched upon above is the extension from $L^2$ to a general $L^p$--space for $1 < p < \infty$. This is a quite dramatic paradigm shift, as the Plancherel theorem is unavailable and thus the simple conditions \eqref{0.8} are no longer enough to prove that $\int_\R f_n^{(3)} \eta\, dv$ converges in $L^p$. Consequently, one is forced to apply multiplier theorems in order to analyze such averages; however, most $L^p$--multiplier theorems, such as the celebrated result of Mihlin--Hörmander, are not well-suited to examine functions like
\begin{equation}
\psi\bigg(\frac{\sqrt{\tau^2 +|\k|^2}}{\g}\bigg) \psi\bigg(\frac{\bbf(v)}{\d}\bigg) \frac{\LL_v(i\tau, i\k, v)}{\LL(i\tau, i\k, v)} \label{0.10}
\end{equation}
in virtue of its lack of homogeneity for large $\sqrt{\tau^2+ |\k|^2}$. Fortunately, there exists a criterion due to \textsc{P. I. Lizorkin} \cite{L}, whose usage seems to have been so far restricted to the Fourier analysis in abstract Banach spaces, that neatly facilitates the investigation of anisotropic multipliers such as \eqref{0.10}. In this way, the principles we have just portrayed can be extended $L^p$, which is truly the case of interest in nonlinear problems.

\subsection{The statement of the main results}
With this philosophy in mind, let us determine some notations and hypotheses.

Inspired by the previous work of \textsc{B. Gess--M. Hofmanová} \cite{GH}, we will also consider certain stochastic terms in the right-hand side of \eqref{0.0}; even so, if one is interested in purely deterministic results, one only needs to let the $\Phi_n$'s appearing henceforth to be $0$. In any event, our probabilistic framework is as follows. The triplet $(\Omega, \Ff, \bbP)$ will stand for a probability space endowed with a complete, right-continuous filtration $(\Ff_t)_{t \geq 0}$. Furthermore, it will be assumed the existence of a sequence $(\b_k(t))_{k \in \N}$ of mutually independent Brownian motions in $\left(\Omega, \Ff, (\Ff_t)_{t \geq 0}, \bbP\right)$, so that, if $\mathscr{H}$ is a separable Hilbert space with a hilbertian basis $(e_k)_{k \in \N}$, $W(t) = \sum_{k=1}^\infty \b_k(t) e_k$ defines a cylindrical Wiener process. Recall that, if $\mathfrak U$ is another separable Hilbert space, $HS(\mathscr H; \mathfrak U)$ denotes the space of the Hilbert--Schmidt operators $T \in \mathscr L(\mathscr H; \mathfrak U)$.

Let $N \geq 1$ be an integer. The next definitions are central to the theory here exposed.

\begin{definition} \label{1.defb}
	Let $\bbf : \R \to \mathscr{L}(\R^N)$ be a nonnegative matrix function.
	\begin{enumerate}
		\item $\bbf$ is said to have a \textit{dichotomous range} if there exists a fixed linear subspace $M \subset \R^N$ such that, for every $v \in \R$, $R(\bbf(v))$, the range of $\bbf(v)$, is either $M$ or $\{0\}$. The maximal subspace $M$ for which such an alternative holds is called the \textit{effective range} of $\bbf$.
		\item $\bbf$ is said to satisfy the \textit{nontransiency condition} in a given measurable set $X \subset \R$ if, putting $F$ to be the boundary of $\{ v \in \R; \bbf(v) = 0 \}$, $F \cap X$ is a null set with respect to the Lebesgue measure.
	\end{enumerate}
\end{definition}

\begin{remark}
	The nontransiency condition translates quantitatively the notion that the set of velocities in which \eqref{0.6.5} passes from a parabolic regime to a hyperbolic one, or vice versa, is small. On the other hand, the effective range hypothesis allows one to generalize the syllogism of Subsection \ref{0.example} to multidimensional anisotropic equations.
\end{remark}

Finally, recall that, given any linear subspace $M \subset \R^N$, the Laplacean operator restricted to $M$ is defined as $$\Delta_M \stackrel{\text{def}}{=} \div_x(P_M \nabla_x),$$ where $P_M$ denotes the orthogonal projection onto $M$. Notice that, in terms of the Fourier transform, given any $\phi \in \Ss(\R_x^N)$, $$\FF_{x}( (-\Delta_M) \phi )(\k) = |P_M \k|^2 (\FF_x\phi)(\k).$$ Likewise, recollect that, given any matrix $\m = (\m_{\mu, \nu})_{1 \leq \mu,\nu \leq N} \in \mathscr L(\R^N)$, the differential operator $D_x^2 : \m$ is defined by $$D_x^2 : \m \stackrel{\text{def}}{=} \sum_{\mu, \nu = 1}^N \m_{\mu,\nu}\frac{\del^2}{\del x_\mu \del x_\nu} = \div_x ( \m \nabla_x ).$$

With these conventions in mind, let us enunciate our first velocity averaging lemma.

\begin{theorem} [The global ``two-phase'' averaging lemma] \label{1.firstthm}
	Let $\JJ$ be finite index set, and let be given exponents $1 < p, q_j < \infty$ $(j \in \JJ)$, $1 \leq r \leq 2$ and $\ell \geq 0$. Assume that $\abf \in \Cc_\loc^{k, \a}(\R; \R^N)$ and $\bbf \in \Cc_\loc^{k, \a}(\R; \mathscr{L}(\R^N))$, where the real numbers $k$ and $\a$ are such that
	\begin{equation} \label{1.condka}
	(k,  \a ) \in \begin{dcases}
	\{ 0 \} \X \{ 0 \} &\text{if $\ell = 0$,} \\
	\{ \lfloor \ell \rfloor \} \X (\ell - \lfloor \ell \rfloor , 1 ]  &\text{if $\ell > 0$ is not an integer, and}\\
	\{\ell - 1 \} \X \{ 1 \}   &\text{if $\ell \geq 1$ is an integer,}
	\end{dcases}
	\end{equation}
	and $\bbf(v)$ is nonnegative for all $v \in \R$ and has a dichotomous range. Let $M$ be the effective range of $\bbf$.
	
	Suppose that, for any integer $n \in \N$, the equation
	\begin{align}
	\frac{\del f_n}{\del t} &+ \abf(v) \cdot \nabla_x f_n - \bbf(v) : D_x^2 f_n = \sum_{j \in \JJ}( - \Delta_{t,x} + 1)^{1/2} (- \Delta_v + 1)^{\ell/2}  g_{j,n} \nonumber \\ & +\sum_{j\in \JJ}(\Pi_j(v) \Delta_M)(- \Delta_v + 1)^{\ell/2}  h_{j,n} + ( - \Delta_{x} + 1)^{1/4} (- \Delta_v + 1)^{\ell/2} \Phi_n \frac{dW}{dt} \label{1.eqf}
	\end{align}
	is almost surely obeyed in $\Dd'(\R_t\X\R_x^N\X\R_v)$, where
	\begin{enumerate}
		\item  $(f_n)_{n\in\N}$ is a bounded sequence in $L^r(\Omega; L^p(\R_t\X\R_x^N\X\R_v))$,
		\item for all $j \in \JJ$, $(g_{j,n})_{n \in \N}$ and $(h_{j,n})_{n \in \N}$ are relatively compact sequences in $L^r(\Omega; L^{q_j}$ $(\R_t$ $\X\,\,\R_x^N\X\R_v))$,
		\item for all $j \in \JJ$, $\Pi_j \in \Cc_\loc^{k, \a}(\R)$ is such that $\supp \Pi_j \subset \supp \bbf$, and
		\item  $(\Phi_n)_{n \in \N}$ is a predictable and relatively compact sequence in $L^2(\Omega\X [0,\infty)_t;$ $HS(\mathscr H;$ $L^2(\R_x^N \X \R_v)))$.
	\end{enumerate}

	Finally, let $\eta \in L^{p'}(\R)$ have compact support, and presume that the \textnormal{nondegeneracy condition}
	\begin{align} 
	\meas \big\{ v \in \supp \eta; \tau + \abf(v) \cdot \k = 0 &\text{ and } \k \cdot \bbf(v)\k = 0  \big\} = 0 \nonumber \\
	&\text{ for all $(\tau, \k) \in \R\X\R^N$ with $\tau^2 + |\k|^2 = 1$}\label{1.nondeg}
	\end{align}
	holds, and that $\bbf(v)$ satisfies the \textnormal{nontransient condition} in $\supp \eta$.
	
	Then, with $s$ being the least number between $p$, $q_j$ $(j \in J)$, and $2$, the sequence of averages $\left(\varphi \int_\R f_n\eta \, dv \right)_{n \in \N}$ is relatively compact in $L^r(\Om; L^s(\R_t\X\R_x^N))$  for any $\varphi \in  (L^{1} \cap L^\infty)(\R_t\X\R_x^N)$. 
\end{theorem}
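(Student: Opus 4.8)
\emph{The reduction.} The plan is to carry out the scheme sketched in \S\ref{0.example}, with $L^p$--multiplier theory in place of Plancherel's theorem. First I reduce to a statement about \emph{strong} convergence: it suffices to show that if $g_{j,n},h_{j,n},\Phi_n\to 0$ strongly, $(f_n)_n$ is bounded in $L^r(\Om;L^p)$ with $f_n\rightharpoonup 0$, and \eqref{1.eqf} holds, then $\varphi\int_\R f_n\eta\,dv\to 0$ in $L^r(\Om;L^s(\R_t\X\R_x^N))$. Indeed, given any subsequence, extract a further one along which $g_{j,n}\to g_j$, $h_{j,n}\to h_j$, $\Phi_n\to\Phi$ and $f_n\rightharpoonup f$ (for $1<r\le 2$ by reflexivity; for $r=1$ one works with a Young measure, or a biting limit, in place of the weak limit), note that $f$ solves \eqref{1.eqf} with these limits, and apply the reduced claim to $\tilde f_n=f_n-f$. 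Next, since $\bbf$ has a dichotomous range and satisfies the nontransient condition in $\supp\eta$, write $\eta=\eta_{\mathrm{hyp}}+\eta_{\mathrm{par}}+\eta_{\mathrm{bad}}$, where $\supp\eta_{\mathrm{hyp}}\subset\{\bbf\equiv0\}$, $\eta_{\mathrm{par}}$ is supported where $\bbf$ is bounded below (so that, by dichotomy, $\k\cdot\bbf(v)\k\asymp|P_M\k|^2$ uniformly there), and $\|\eta_{\mathrm{bad}}\|_{L^{p'}}$ is as small as we wish (possible because $F=\del\{\bbf=0\}$ is $\eta$--null). Then $\varphi\int f_n\eta_{\mathrm{bad}}\,dv$ is uniformly small in $L^r(\Om;L^s)$ by H\"older's inequality in $v$ and the $L^p$--bound, so it remains to treat $\eta=\eta_{\mathrm{hyp}}$ and $\eta=\eta_{\mathrm{par}}$.

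\emph{The three pieces.} For fixed $\g,\d>0$, Fourier--transform \eqref{1.eqf} in $(t,x)$ and decompose $f_n=f_n^{(1)}+f_n^{(2)}+f_n^{(3)}$ exactly as in \S\ref{0.example} (low frequencies; high frequencies with $|\LL(i\tau',i\k',v)|<\d$; high frequencies with $|\LL(i\tau',i\k',v)|\ge\d$). The low--frequency average $\varphi\int f_n^{(1)}\eta\,dv$ is band--limited in $(t,x)$, and, because the coefficients $\abf(v),\bbf(v)$ are functions of $v$ alone and hence commute with $D_{t,x}$, $f_n^{(1)}$ itself solves \eqref{1.eqf} with band--limited data $\to 0$; a Rellich--Kondrachov argument (for the stochastic term combined with the relative compactness of $(\Phi_n)$) together with $f_n^{(1)}\rightharpoonup 0$ shows this piece tends to $0$. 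For the small--symbol average, \eqref{1.nondeg} together with a compactness argument on the unit sphere of $\R_\tau\X\R_\k^N$ (as in \textsc{Lions--Perthame--Tadmor} \cite{LPT}) gives $\go(\d):=\sup_{\tau^2+|\k|^2=1}\meas\{v\in\supp\eta:|\LL(i\tau,i\k,v)|\le\d\}\to 0$ as $\d\to 0$; hence $\varphi\int f_n^{(2)}\eta\,dv$ is small, uniformly in $n$, as $\d\to 0$ --- trivially in $L^2$ (Plancherel plus the Cauchy--Schwarz inequality in $v$, bounding the $v$--integral by $\int_{|\LL(i\tau',i\k',\cdot)|\le\d}|\eta|^2$), and in $L^p$ by interpolating this with the $L^p$--boundedness of the localising multiplier supplied by the next step. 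Finally, for the main piece one divides the transformed equation by $\LL(i\tau,i\k,v)$ on $\supp f_n^{(3)}$, multiplies by $\eta(v)$, integrates $dv$, and transfers $(-\Delta_v+1)^{\ell/2}$ onto the cut--offs by self--adjointness; this writes $\varphi\int f_n^{(3)}\eta\,dv$ as a finite sum of $\varphi$ times Fourier multipliers in $(t,x)$ (acting fibrewise in $v$ and integrated against a $v$--kernel built from $\eta$, the cut--offs, the symbol quotients and their $v$--derivatives) applied to $g_{j,n}$, $h_{j,n}$, $\Phi_n$. Since those $\to 0$, this piece $\to 0$ in $L^r(\Om;L^{q_j})$ (respectively $L^r(\Om;L^2)$), hence in $L^r(\Om;L^s)$, \emph{provided} that the multipliers are bounded on $L^{q_j}$ (respectively $L^2$) uniformly in the frequency variables.

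\emph{The multiplier estimates --- the crux.} The symbols to estimate are
\[
m(\tau,\k,\s,v)=\psi\!\Big(\tfrac{\sqrt{\tau^2+|\k|^2}}{\g}\Big)\,\psi\!\Big(\tfrac{\LL(i\tau',i\k',v)}{\d}\Big)\,\frac{\mathfrak n(\tau,\k,\s,v)}{\LL(i\tau,i\k,v)},
\]
with $\s$ the $v$--frequency and $\mathfrak n$ one of $(\tau^2+|\k|^2+1)^{1/2}(\s^2+1)^{\ell/2}$, $\Pi_j(v)|P_M\k|^2(\s^2+1)^{\ell/2}$, $(|\k|^2+1)^{1/4}(\s^2+1)^{\ell/2}$. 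On the support of the cut--offs, writing $\LL(i\tau,i\k,v)=\sqrt{\tau^2+|\k|^2}\,\big(i(\tau'+\abf(v)\cdot\k')+\sqrt{\tau^2+|\k|^2}\,\k'\cdot\bbf(v)\k'\big)$, one checks: (a) $\sqrt{\tau^2+|\k|^2}\lesssim_{\g,\d}|\LL(i\tau,i\k,v)|$; (b) $|P_M\k|^2\lesssim\k\cdot\bbf(v)\k=\Re\LL(i\tau,i\k,v)\le|\LL(i\tau,i\k,v)|$ on $\supp\Pi_j\subset\supp\bbf$ (which lies away from $F$), and likewise $(|\k|^2+1)^{1/4}\lesssim_\g\sqrt{\tau^2+|\k|^2}$; (c) since $\bbf(v)=P_M\bbf(v)P_M$ in the parabolic phase, $|\del_v^\mu\LL(i\tau,i\k,v)|\lesssim|\k|+|P_M\k|^2\lesssim|\LL(i\tau,i\k,v)|$ for small $|\mu|$, while the $v$--derivatives of $\LL(i\tau',i\k',v)$ stay bounded. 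Combining (a)--(c), $m$ and its mixed derivatives $\del_\tau^{a}\del_{\k_1}^{b_1}\!\cdots\del_{\k_N}^{b_N}\del_v^{\mu}m$ with $a,b_i\in\{0,1\}$ and $|\mu|\le\lceil\ell\rceil$ satisfy the anisotropic hypotheses of the \textsc{Lizorkin} multiplier theorem \cite{L}, with constants depending on $\g,\d,\eta$ but not on $n$; the fractional part of the $v$--order is absorbed by the $\a$--H\"older continuity of $\abf,\bbf$ via a Besov/commutator estimate --- which is exactly why the calibration \eqref{1.condka} is imposed. In the hyperbolic phase the symbol is homogeneous of degree one, $m$ is homogeneous of degree zero, and the classical Mihlin--H\"ormander theorem already suffices. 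The stochastic term is treated by these same bounds applied pathwise, together with It\^o's isometry, the Burkholder--Davis--Gundy inequality and the relative compactness of $(\Phi_n)$, as in \textsc{Gess--Hofmanov\'a} \cite{GH} and \textsc{Frid} \textit{et al.} \cite{FL2}; the only structural point is that $(-\Delta_x+1)^{1/4}$ is admissible because, by (b), after division by $\LL$ it contributes at most ``half'' of the parabolic part $\k\cdot\bbf(v)\k$ of the symbol.

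\emph{Assembly and the main obstacle.} Fixing $\g$, choosing $\d$ small (depending on $\e$) so that the small--symbol piece is $\le\e$, and $\eta_{\mathrm{bad}}$ small, the three pieces yield $\limsup_n\|\varphi\int_\R f_n\eta\,dv\|_{L^r(\Om;L^s)}\lesssim\e$; letting $\e\to 0$ completes the proof. I expect the main obstacle to be the multiplier bookkeeping of the third paragraph --- verifying Lizorkin's hypotheses for the inhomogeneous, anisotropic symbol $m$ once $(-\Delta_v+1)^{\ell/2}$ has been moved onto it --- which is precisely where the dichotomous--range / effective--range structure, the support condition $\supp\Pi_j\subset\supp\bbf$, and the H\"older--regularity calibration \eqref{1.condka} are all used essentially; a secondary difficulty is obtaining the uniform--in--$n$ smallness of the low--frequency and small--symbol pieces in the genuinely $L^p$ ($p\neq 2$) and stochastic framework.
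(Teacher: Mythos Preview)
Your outline captures the right architecture --- split $\eta$ into hyperbolic and parabolic supports, decompose in frequency, and use Lizorkin's criterion on the anisotropic quotient $m$ --- and your multiplier bookkeeping in the third paragraph is essentially what the paper does. But there are two genuine gaps.

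\textbf{The reduction and the low-frequency piece do not survive the endpoint $r=1$ or the stochastic setting.} You want to extract a weak limit $f_n\rightharpoonup f$ in $L^r(\Omega;L^p)$, subtract it, and then argue that the band-limited part $f_n^{(1)}\to 0$ via Rellich--Kondrachov. For $r=1$ the space $L^1(\Omega;L^p)$ is not reflexive, and a Young-measure or biting limit does not produce an $f\in L^1(\Omega;L^p)$ solving \eqref{1.eqf}; even for $r>1$, the weak limit in $L^r(\Omega;L^p)$ gives you no $\omega$-wise convergence, so the Rellich step (which is pathwise) does not yield convergence in $L^r(\Omega;L^s)$ without an extra uniform-integrability argument you have not supplied. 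The paper sidesteps all of this: it never takes a weak limit, but works with the Cauchy differences $\ff_{m,n}=f_m-f_n$ and shows the averages are Cauchy. The low-frequency piece is then handled by a completely elementary kernel bound (Young's inequality gives $\|\vf_{m,n}^{(1)}\|_{L^\infty_{t,x}}\lesssim\gamma^{(N+1)/p}\|\ff_{m,n}\|_{L^p_{t,x,v}}$, uniformly in $m,n$), which is both simpler and robust under $L^r(\Omega)$.

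\textbf{You must mollify $\eta$, and the paper inserts a fourth cutoff you omit.} When you ``transfer $(-\Delta_v+1)^{\ell/2}$ onto the cut-offs'', the factor $\eta(v)$ is hit as well; since $\eta$ is merely $L^{p'}$, this is illegal. The paper replaces $\eta$ by a smooth $\eta_{\delta,\gamma}$ (Lemma~\ref{1.2.1.molleta}) whose support is already the disjoint union $K_h\cup K_p$ --- so the mollification and your hyperbolic/parabolic split are done in one stroke, and the error $\eta-\eta_{\delta,\gamma}$ is a fifth, harmless piece. Separately, the paper's decomposition has \emph{four} frequency pieces, not three: in the parabolic phase it also localises where the \emph{restricted} normalised symbol $(\widetilde{R\LL})(i\tau,i\k,v)=i(\tau'+(P_{M^\perp}\abf)(v)\cdot\k')$ is large. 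This extra cutoff is what drives the clean lower bound $|\LL|\gtrsim\sqrt{\tau^2+|\k|^2}+|P_M\k|^2$ (Lemma~\ref{1.7.boundlemma}) used throughout the Lizorkin verification; your route to the same inequality via the $\widetilde\LL$-cutoff alone can be made to work, but you should be explicit that the constants then depend on $\gamma$ as well as $\delta$, and that the ``intermediate-frequency'' region $\gamma/2\le\sqrt{\tau^2+|\k|^2}\le R$ must be handled by compactness. Finally, your ``Besov/commutator'' remark for the fractional $v$-derivative is where the nonlocality of $(-\Delta_v)^{\zz/2}$ mixes $K_h$ and $K_p$; the paper treats this carefully (Proposition~\ref{1.7.1.prop}) by proving a $v$-decay estimate $\|T_v\|_{\mathscr L(L^{q_j})}\lesssim(1+|v|)^{-1-\zz}$ for the resulting multiplier, and you will need something of this sort rather than a one-line commutator bound.
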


Some observations are in order.

\begin{remark}[On the meaning of \eqref{1.eqf}] \label{1.remark}
	Conserving the assumptions of the first two paragraphs of Theorem \ref{1.firstthm}, the differential equation \eqref{1.eqf} should be understood as follows: Almost surely, it holds that
	\begin{align}
	-\int_{\R_t} \int_{\R_x^N} \int_{\R_v} &f_n \left( \frac{\del \phi}{\del t} + \abf(v) \cdot \nabla_x \phi + \bbf(v) : D_x^2 \phi\right)\, dv\,dx\,dt \nonumber\\ =&	\sum_{j \in \JJ}  \int_{\R_t} \int_{\R_x^N} \int_{\R_v} \left( (- \Delta_v + 1)^{\ell/2}(- \Delta_{t,x} + 1)^{1/2}\phi \right)  g_{j,n}  \,dvdxdt  \nonumber \\
	&+  \sum_{j \in \JJ} \int_{\R_t} \int_{\R_x^N} \int_{\R_v} \Big((- \Delta_v + 1)^{\ell/2} (\Pi_j(v) \Delta_M\phi) \Big)  h_{j,n}  \,dvdxdt \nonumber \\
	&+ \int_0^\infty \int_{\R_x^N} \int_{\R_v}  \Big((- \Delta_v + 1)^{\ell/2} ( - \Delta_{x} + 1)^{1/4}\phi \Big)  \Phi_n \, dvdxdW(t) \label{1.eqftil}
	\end{align}
	for all $\phi \in \Cc_c^\infty(\R_t\X\R_x^N\X\R_v)$ and $n \in \N$. Due to the Hölder regularity of the $\Pi_j$'s and the compact support of $\phi$, each and every term in \eqref{1.eqftil} is almost surely well-defined---see, e.g., Proposition \ref{1.7.prop}. Clearly, this definition may be extended to the case in which, rather than in the entire space $\R_t\X\R_x^N$, one is only considering $(t,x)$ lying in some smaller open set $Q \subset \R_t\X\R_x^N$.
\end{remark}

\begin{remark}[On the linear subspace $M$]
	Certainly, one could have assumed without loss of generality that $M$ had the form
	$$M = \big\{ x = (x_1, \ldots, x_N) \in \R^N; x_\nu = 0 \text{ for $N' < \nu$} \big\},$$
	where $N' = \dim M$ is a fixed integer. In this case, $\Delta_M$ would be simply $$\Delta_M = \frac{\del^2}{\del x_1^2} + \cdots + \frac{\del^2}{\del x_{N'}^2}.$$ Nevertheless, we have opted not to do so, as we reckon this would significantly clutter the notation. Anyhow, the linear subspace $M$ is introduced in order to consider equations that are only parabolic in some variables (such as \eqref{0.3}).
\end{remark}

\begin{remark}[On the set $\JJ$, the functions $\Pi_j(v)$, etc]
	Essentially, $\Pi_j(v)$ are present in order that the deterministic source terms in \eqref{1.eqf} to carry full second--order derivatives in $x$ during the ``parabolic'' phase of \eqref{1.eqf}, ascertaining the criticality of Theorem \ref{1.firstthm}. In accordance to our previous discussion, notice that, if $R(\bbf(v)) = M$, then  $(-\Delta_{t,x}+1)^{1/2} + (-\Delta_M)$ is the elliptic that tightly dominates $\LL(i\tau, \k, v)$.
	
	So as to be more consistent with this philosophy, the right-hand side of \eqref{1.eqf} could have also included terms of the form
	\begin{equation}
	\sum_{j \in \JJ} \Upsilon_j(v)  (-\Delta_M)^{1/2} (-\Delta_v + 1)^{\ell/2} \Psi_{j,n}, \label{0.11}
	\end{equation}
	where, for any $j \in \JJ$, $\Upsilon_j \in \Cc_\loc^{k,\a}(\R)$ with $\supp \Upsilon_j \subset \supp \bbf$, and $(\Psi_{j,n})_{n \in \N}$ is predictable and relatively compact in $L^2(\Om\X[0,\infty); HS(\R_x^N\X\R_v))$. Indeed, it is well--known that solutions to stochastic differential equations involving the white noise possess one--half of the spatial regularity one would expect from their deterministic counterparts. Nevertheless, we will omit such terms like \eqref{0.11} for simplicity's sake.
	
	Let us mention that, in spite of the index set $\JJ$ commonly being a singleton, it is important to let $\JJ$ be a general finite set so that \eqref{1.eqf} is ``closed under localizations''---see the next theorem.
\end{remark}

Even though the next averaging lemma is derivative of the former, its statement is better adapted to some applications. Again, let us first fix another notation.

Let $1 \leq p \leq \infty$, $\mathscr E$ be an Euclidean space, and $\mathscr U \subset \mathscr E$ be an open set. $L^r(\Om; L_\loc^p(\mathscr U))$ will represent the set of all mappings $f : \Om \to L_\loc^{p}(\mathscr U)$, such that $\theta f \in L^r(\Om; L^{p}(\mathscr U))$ for any $\theta \in \Cc_c^\infty(\mathscr U)$. This set clearly exemplifies the notion of a Fr\'{e}chet space. 

\begin{theorem} [The local ``two-phase'' averaging lemma] \label{1.secondthm}
	Let $\JJ$ be finite index set, and let be given exponents $1 < p, q_j < \infty$ $(j \in \JJ)$, $1 \leq r \leq 2$ and $\ell \geq 0$. Assume that $\abf \in \Cc_\loc^{k, \a}(\R; \R^N)$ and $\bbf \in \Cc_\loc^{k, \a}(\R; \mathscr{L}(\R^N))$, where the real numbers $k$ and $\a$ satisfy the relation \eqref{1.condka}, and $\bbf(v)$ is nonnegative for all $v \in \R$ and has a dichotomous range. Moreover, let $M$ be the effective range of $\bbf$, and let $Q \subset \R_t\X\R_x^N$ be an open set.
	
	Suppose that, for any $n \in \N$, the equation \eqref{1.eqf} is almost surely obeyed in $\Dd'(Q\X\R_v)$, where
	\begin{enumerate}
		\item  $(f_n)_{n\in\N}$ is a bounded sequence in $L^r(\Omega; L_\loc^p(Q\X\R_v))$ such that, for any $\phi \in \Cc_c^\infty(Q)$ and $\z \in \Cc_c^\infty(\R_v)$, either
		\begin{enumerate}
			\item both $\begin{dcases*}
			(-\Delta_v + 1)^{-\ell/2} (-\Delta_{t,x}+1)^{-1/2} (\phi \z f_n) \text{ and}\\
			(-\Delta_v + 1)^{-\ell/2} (-\Delta_{t,x} + 1)^{-1/2} (\nabla_x \phi \cdot \,\operatorname{div}_x(\z f_n \bbf ))
			\end{dcases*}$ are relatively compact in $L^r(\Omega; L^p(\R_t\X\R_x^N\X\R_v))$, or
			\item $(-\Delta_v + 1)^{-\ell/2} (-\Delta_{t,x} + 1)^{-1/4} (\phi f_n)$ is relatively compact in $L^r(\Omega; L^p(\R_t\X\R_x^N\X\R_v))$, 
		\end{enumerate}
		\item for all $j \in \JJ$, $(g_{j,n})_{n \in \N}$ and $(h_{j,n})_{n \in \N}$ are relatively compact sequences in $L^r(\Omega; L^{q_j}$ $(\R_t\X\R_x^N\X\R_v))$,
		\item for all $j \in \JJ$, $\Pi_j \in \Cc_\loc^{k, \a}(\R)$ is such that $\supp \Pi_j \subset \supp \bbf$, and
		\item  $(\Phi_n)_{n \in \N}$ is a predictable and relatively compact sequence in $L^2(\Omega\X [0,\infty)_t;$ $HS(\mathscr H;$ $L^2(\R_x^N \X \R_v)))$.
	\end{enumerate}
	
	Finally, let $\eta \in L^{p'}(\R)$ have compact support, and presume that the \textnormal{nondegeneracy condition} \eqref{1.nondeg} holds, and that $\bbf(v)$ satisfies the \textnormal{nontransient condition} in $\supp \eta$.
	
	Then, the sequence of averages $(\int_\R f_n\eta \, dv)_{n \in \N}$ is relatively compact in $L^r(\Om; L_\loc^s(Q))$, with $s$ being the least number between $p$, $q_j$ $(j \in \JJ)$, and $2$. In particular, if  $(f_n)_{n\in\N}$ is bounded in $L^r(\Omega; L^p(Q\X \supp \eta))$, and $Q$ is of finite measure, the averages $\left(\int_\R f_n\eta \, dv \right)_{n \in \N}$ are relatively compact in $L^r(\Om; L^z(Q))$ for any $1 \leq z < p$.
\end{theorem}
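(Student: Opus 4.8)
The plan is to deduce the local statement from the global one, Theorem \ref{1.firstthm}, by a localization argument. Given $\phi \in \Cc_c^\infty(Q)$ and a cutoff $\z \in \Cc_c^\infty(\R_v)$ equal to $1$ on a neighbourhood of $\supp\eta$, I would first show that the truncated density $u_n := \phi\z f_n$ satisfies, almost surely in $\Dd'(\R_t\X\R_x^N\X\R_v)$, an equation of exactly the shape \eqref{1.eqf} but posed on the whole space, with a new finite index set $\JJ'$. The point is that applying $\phi$ and $\z$ to \eqref{1.eqf} produces, besides the obvious contributions from $g_{j,n},h_{j,n},\Phi_n$ multiplied by smooth compactly supported factors (which remain relatively compact in the appropriate $L^r(\Om;L^{q_j})$ spaces), a collection of commutator terms coming from the transport operator $\del_t + \abf\cdot\nabla_x$, from the second-order operator $\bbf:D_x^2$, and from the fractional operators $(-\Delta_{t,x}+1)^{1/2}$, $(-\Delta_v+1)^{\ell/2}$, etc. The commutators with the first-order transport part involve only $f_n$ (no derivatives), those with $\bbf:D_x^2$ involve $f_n$ and $\nabla_x\cdot(\z f_n\bbf)$ paired against $\nabla_x\phi$; these are precisely the quantities hypothesis (1a) requires to be relatively compact after smoothing by $(-\Delta_v+1)^{-\ell/2}(-\Delta_{t,x}+1)^{-1/2}$, so each such term can be rewritten as $(-\Delta_{t,x}+1)^{1/2}(-\Delta_v+1)^{\ell/2}$ applied to a relatively compact sequence in $L^r(\Om;L^s)$ — a new $g_{j',n}$ with $q_{j'}=s$. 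Similarly, in alternative (1b), the weaker smoothing by $(-\Delta_{t,x}+1)^{-1/4}$ lets one absorb the commutator terms into the Wiener-type source $(-\Delta_x+1)^{1/4}(-\Delta_v+1)^{\ell/2}\Phi_n$ (or into a separate $(-\Delta_{t,x}+1)^{1/4}$-source, which is dominated by $(-\Delta_{t,x}+1)^{1/2}$). The fractional operators in $v$ commute with multiplication by $\z$ up to a bounded error by pseudodifferential calculus, using the Hölder hypothesis \eqref{1.condka}.

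Once $u_n=\phi\z f_n$ is known to solve a global equation of the form \eqref{1.eqf} on $\R_t\X\R_x^N\X\R_v$ with source terms satisfying all the hypotheses of Theorem \ref{1.firstthm} — note that $(u_n)$ is bounded in $L^r(\Om;L^p(\R_t\X\R_x^N\X\R_v))$ by assumption (1), the nondegeneracy \eqref{1.nondeg} and nontransiency on $\supp\eta$ are inherited verbatim, and $\eta$ is unchanged — I would apply Theorem \ref{1.firstthm} with the test function $\varphi\equiv 1$ replaced by any $\vartheta\in(L^1\cap L^\infty)(\R_t\X\R_x^N)$ with $\vartheta\equiv 1$ on $\supp\phi$. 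This yields that $\vartheta\int_\R u_n\eta\,dv = \vartheta\,\phi\int_\R f_n\eta\,dv$ (using $\z\equiv 1$ on $\supp\eta$) is relatively compact in $L^r(\Om;L^s(\R_t\X\R_x^N))$, i.e. $\phi\int_\R f_n\eta\,dv$ is relatively compact there. Since $\phi\in\Cc_c^\infty(Q)$ was arbitrary, $(\int_\R f_n\eta\,dv)_n$ is relatively compact in $L^r(\Om;L^s_\loc(Q))$, which is the first assertion.

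For the final sentence, suppose in addition that $(f_n)$ is bounded in $L^r(\Om;L^p(Q\X\supp\eta))$ and $|Q|<\infty$. By Hölder in $v$ against $\eta\in L^{p'}$, the averages $\int_\R f_n\eta\,dv$ are bounded in $L^r(\Om;L^p(Q))$. Combined with relative compactness in $L^r(\Om;L^s_\loc(Q))$ — and since $|Q|<\infty$ makes $L^p(Q)\hookrightarrow L^s(Q)$ — a standard interpolation/uniform-integrability argument (Vitali) upgrades this to relative compactness in $L^r(\Om;L^z(Q))$ for every $1\le z<p$: boundedness in $L^p$ gives equi-integrability of the $z$-th powers, and the $L^s_\loc$-compactness gives a Cauchy-in-measure subsequence on all of $Q$, hence $L^z(Q)$-convergence. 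I would carry out this last step by extracting, via a diagonal argument over an exhaustion of $Q$ by relatively compact subsets, a subsequence converging in $L^s_\loc(Q)$; equi-integrability then forces convergence in $L^z(Q)$.

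The main obstacle is the first step: showing that the commutator terms generated by localization genuinely fall under one of the two structural alternatives in hypothesis (1), and in particular keeping careful track of how the fractional operators $(-\Delta_{t,x}+1)^{1/2}$, $(-\Delta_{t,x}+1)^{1/4}$ and $(-\Delta_v+1)^{\ell/2}$ interact with the smooth cutoffs. The delicate point is that $(-\Delta_{t,x}+1)^{1/2}$ does not localize in space-time in an elementary way (it is nonlocal), so one must argue that $[\phi,(-\Delta_{t,x}+1)^{1/2}]$ composed with the smoothing inverse is bounded on $L^p$ and maps the available compact data to compact data — this is where the precise formulation of hypothesis (1a)/(1b), which pre-smooths $f_n$ by exactly the conjugate operators, is essential, and verifying that the bookkeeping closes is the crux of the proof.
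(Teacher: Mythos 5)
Your proposal is correct and follows essentially the same route as the paper's own proof (Section 4): localize via $\theta\vartheta^2 f_n$, rewrite the commutator terms from $\del_t+\abf\cdot\nabla_x$, $\bbf:D_x^2$, and the fractional operators as admissible sources using hypothesis (1a) or (1b) together with a multiplication-by-Hölder-functions lemma (the paper's Proposition 4.2, proved via the Kato--Ponce inequality, where you invoke pseudodifferential calculus), and then apply the global Theorem 2.1 with a cutoff $\varphi$ that is $1$ on $\supp\theta$. Your Vitali/equi-integrability argument for the final $L^z(Q)$-assertion likewise matches the paper's appeal to Lemma 3.3.
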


\begin{remark} [On the conditions (a) and (b)]
	In the probabilistic setting we are considering, it is pivotal to impose the relative compactness of $(f_n)$ in a local, anisotropic, negative Sobolev space, once this would not be a corollary of weak convergence arguments as it would have been in the deterministic case. Although such conditions do not hold in general, there exist certain procedures involving the Prohorov compactness theorem, the Skohorod representation theorem, and the Gyöngi--Krylov lemma which allow such hypotheses; see, e.g., \textsc{A. Debussche--M. Hofmanová--J. Vovelle} \cite{DHV}, \textsc{H. Frid} \textit{et al.} \cite{FL1}, and the references therein.
	
	Once more, if one is solely concerned with deterministic results, one may disregard both conditions (a) and (b). Further comments on such conditions are postponed to Section \ref{1.secremarks}.
\end{remark}

	We now turn to the averaging lemmas for equations which display one specific behavior. We notice that, under such a circumstance, the statements of the results are quite facilitated.
	
	\begin{theorem} [The global ``single-phase'' averaging lemma] \label{1.firstparthm}
		Let $\JJ$ be finite index set, and let be given exponents $1 < p, q_j < \infty$ $(j \in\JJ)$, $1 \leq r \leq 2$ and $\ell \geq 0$. Assume that $\abf \in \Cc_\loc^{k, \a}(\R; \R^N)$ and $\bbf \in \Cc_\loc^{k, \a}(\R; \mathscr{L}(\R^N))$, where the real numbers $k$ and $\a$ satisfy the relation \eqref{1.condka}. Furthermore, suppose that there exists a linear subspace $M \subset \R^N$, such that $R(\bbf(v)) \subset M$ and $\bbf(v)$ is nonnegative for all $v \in \R$. 
		
		Assume that, for any $n \in \N$, the equation
		\begin{align}
			\frac{\del f_n}{\del t} + \abf(v) \cdot \nabla_x f_n - \bbf(v) : D_x^2 &f_n =	  \sum_{j \in \JJ} \big(( - \Delta_{t,x} + 1)^{1/2} - \Delta_M\big)  (- \Delta_v + 1)^{\ell/2}  g_{j,n} \nonumber \\ &+ \big( (-\Delta_x + 1)^{1/4} + (-\Delta_M)^{1/2} \big) (- \Delta_v + 1)^{\ell/2} \Phi_n \frac{dW}{dt} \label{1.eqfpar}
		\end{align}
		is almost surely obeyed in $\Dd'(\R_t\X\R_x^N\X\R_v)$, where
		\begin{enumerate}
			\item  $(f_n)_{n\in\N}$ is a bounded sequence in $L^r(\Omega; L^p(\R_t\X\R_x^N\X\R_v))$,
			\item for all $j \in \JJ$, $(g_{j,n})_{n \in \N}$ is a relatively compact sequence in $L^r(\Omega; L^{q_j}(\R_t$ $\X\,\,\R_x^N\X\R_v))$, and
			\item  $(\Phi_n)_{n \in \N}$ is a predictable and relatively compact sequence in $L^2(\Omega\X [0,\infty)_t;$ $HS(\mathscr H;$ $L^2(\R_x^N \X \R_v)))$.
		\end{enumerate}

		Finally, let $\eta \in L^{p'}(\R)$ have compact support, and presume that the \textnormal{nondegeneracy condition}
		\begin{align} 
		\meas \big\{ v \in \supp \eta; \tau + (P_{M^\perp} &\abf)(v) \cdot \k = 0 \text{ and } \k \cdot \bbf(v)\k = 0  \big\} = 0 \nonumber\\
		&\text{ for all $(\tau, \k) \in \R\X\R^N$ with $\tau^2 + |\k|^2 = 1$}\label{1.nondegp}
		\end{align}
		holds.
		
		Then, with $s$ being the least number between $p$, $q_j$ $(j \in \JJ)$, and $2$, the sequence of averages $\left(\varphi \int_\R f_n\eta \, dv \right)_{n \in \N}$ is relatively compact in $L^r(\Om; L^s(\R_t\X\R_x^N))$  for any $\varphi \in  (L^{1} \cap L^\infty)(\R_t\X\R_x^N)$. 
	\end{theorem}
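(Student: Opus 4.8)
The plan is to carry out the program sketched in Subsection~\ref{0.example} in the $L^p$ setting, the role of the Plancherel theorem being played by Lizorkin's anisotropic multiplier theorem.

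\textbf{Reductions and decomposition.} Because $(g_{j,n})_n$ and $(\Phi_n)_n$ are relatively compact and $(f_n)_n$ is bounded in the (essentially reflexive) space $L^r(\Om;L^p(\R_t\X\R_x^N\X\R_v))$, it suffices to prove that whenever $g_{j,n}\to 0$ in $L^r(\Om;L^{q_j})$, $\Phi_n\to 0$ in $L^2(\Om\X[0,\infty);HS(\H;L^2))$, and $f_n\rightharpoonup 0$ weakly, one has $\varphi\int_\R f_n\eta\,dv\to 0$ in $L^r(\Om;L^s(\R_t\X\R_x^N))$; the general conclusion follows by extracting subsequences along which the data converge and subtracting a solution built from the limiting data. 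Taking the Fourier transform $\FF_{t,x}$ in $(t,x)$, writing $\rho=\sqrt{\tau^2+|\k|^2}$, $(\tau',\k')=\rho^{-1}(\tau,\k)$, and fixing $\psi$ as in Subsection~\ref{0.example}, I would split $f_n=f_n^{(1)}+f_n^{(2)}+f_n^{(3)}$ according to whether $\rho\lesssim\gamma$ (low frequencies), or $\rho\gtrsim\gamma$ and $|\LL(i\tau',i\k',v)|\lesssim\d$ (degenerate directions), or $\rho\gtrsim\gamma$ and $|\LL(i\tau',i\k',v)|\gtrsim\d$ (good part), where $\gamma\gg 1$ and $\d\ll 1$ are to be fixed at the end.

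\textbf{The two soft pieces.} The average $\varphi\int_\R f_n^{(1)}\eta\,dv$ is band-limited in $(t,x)$, hence by Bernstein's inequality it is uniformly bounded and equi-Lipschitz on compact sets; together with the tightness furnished by $\varphi\in L^1\cap L^\infty$, the family $\{\varphi\int_\R f_n^{(1)}\eta\,dv\}_n$ is $\bbP$-almost surely relatively compact in $L^s(\R_t\X\R_x^N)$, and since $f_n^{(1)}\rightharpoonup 0$ this forces it to converge to $0$ there, the convergence lifting to $L^r(\Om;L^s)$ by dominated convergence. For the degenerate piece, the nondegeneracy hypothesis \eqref{1.nondegp} together with $R(\bbf(v))\subset M$ and dominated convergence give $\sup_{|(\tau,\k)|=1}\meas\{v\in\supp\eta:\ |\LL(i\tau,i\k,v)|\le\d\}\to 0$ as $\d\to 0$; combining this with the uniform $L^r(\Om;L^p)$ bound on $f_n$ and Hölder's inequality yields $\bigl\|\varphi\int_\R f_n^{(2)}\eta\,dv\bigr\|_{L^r(\Om;L^s)}\le C(\d)$ with $C(\d)\to 0$, uniformly in $n$ and $\gamma$.

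\textbf{The good piece and the core estimate.} On the support of the relevant cutoffs I may divide the equation \eqref{1.eqfpar} by $\LL(i\tau,i\k,v)$, obtaining for $\FF_{t,x}f_n^{(3)}$ an expression built from the truncating factor $\psi(\rho/\gamma)\psi(\LL(i\tau',i\k',v)/\d)$, the ratio $\bigl((\rho^2+1)^{1/2}+|P_M\k|^2\bigr)/\LL(i\tau,i\k,v)$, and $(-\Delta_v+1)^{\ell/2}\FF_{t,x}g_{j,n}$, plus the analogous stochastic term with elliptic dominator $(-\Delta_x+1)^{1/4}+(-\Delta_M)^{1/2}$. Multiplying by $\eta(v)$, integrating in $v$, and integrating by parts in $v$ so as to move $(-\Delta_v+1)^{\ell/2}$ onto the multiplier and onto $\eta$ --- licit by the $\Cc^{k,\a}_\loc$ regularity of $\abf$ and $\bbf$ and the matching relation \eqref{1.condka}, cf.\ Proposition~\ref{1.7.prop} --- leaves Fourier multipliers acting on $g_{j,n}$ and $\Phi_n$. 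The decisive point is that these multipliers, and all the mixed derivatives in $(\tau,\k)$ demanded by Lizorkin's criterion, are bounded uniformly in $v\in\supp\eta$ and in $\gamma,\d$; this rests on the \emph{tight domination} of $\LL(i\tau,i\k,v)$ by the elliptic symbol $(\rho^2+1)^{1/2}+|P_M\k|^2$ on the good region, i.e. $(\rho^2+1)^{1/2}+|P_M\k|^2\le C\,|\LL(i\tau,i\k,v)|$ together with $|D^\mu_{(\tau,\k)}\LL(i\tau,i\k,v)|\le C\bigl((\rho^2+1)^{1/2}+|P_M\k|^2\bigr)$ for the pertinent multi-indices $\mu$, which follows from \eqref{1.nondegp} and from $R(\bbf(v))\subset M$ (so that the parabolic part of $\LL$ only involves $P_M\k$). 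Granting this, Lizorkin's theorem yields $\bigl\|\varphi\int_\R f_n^{(3)}\eta\,dv\bigr\|_{L^r(\Om;L^s)}\le C\sum_j\|g_{j,n}\|_{L^r(\Om;L^{q_j})}+C\,(\text{stochastic contribution})$, the stochastic term being controlled through the Burkholder--Davis--Gundy inequality and the $L^2$--multiplier bound (which accounts for the constraint $s\le 2$), and both quantities tend to $0$. Letting $n\to\infty$ and then $\d\to 0$ completes the proof.

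\textbf{Main obstacle.} I expect the crux to be exactly the uniform multiplier estimate above: proving the tight domination of $\LL(i\tau,i\k,v)$ by $(\rho^2+1)^{1/2}+|P_M\k|^2$ on the good region with control of \emph{all} the derivatives Lizorkin's criterion requires, uniformly over $v$ in the support of $\eta$ and over the cutoff parameters, and simultaneously justifying the $v$-integration by parts under only $\Cc^{k,\a}_\loc$ regularity of the coefficients and $L^{p'}$ regularity of $\eta$.
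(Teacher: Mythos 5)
Your overall framework (mollify $\eta$, Fourier-split between low, degenerate, and good frequencies, apply Lizorkin's criterion to the good piece, close via the nondegeneracy condition) mirrors the paper's scheme, but your \emph{choice of cutoff} is the one the paper reserves for the two-phase Theorem~\ref{1.firstthm}, and for the single-phase statement it fails. You classify frequencies by the size of the full normalized symbol $\wt\LL(i\tau,i\k,v)=\LL(i\tau',i\k',v)$, and on the good region you claim the tight domination $(\rho^2+1)^{1/2}+|P_M\k|^2\le C\,|\LL(i\tau,i\k,v)|$. This is false. Take $N=1$, $M=\R$, $\abf(v)=v$, $\bbf(v)=v^2$, $v\ne0$ small, $\tau'\approx1$, $\k'=\epsilon$ small. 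Then $|\wt\LL|\approx1$, so the point is in your good region for all small $v$; but $|\LL(i\tau,i\k,v)|^2\approx\rho^2+v^4\rho^4\epsilon^4$ while $(\rho^2+1)^{1/2}+|P_M\k|^2\approx\rho+\rho^2\epsilon^2$, so the ratio is of order $(1+\rho\epsilon^2)/\sqrt{1+v^4\rho^2\epsilon^4}$, which blows up whenever $1/\epsilon^2\ll\rho\ll1/(v^2\epsilon^2)$ — a nonempty range for every $v^2<1$. Hence the Lizorkin bounds on your multiplier are not uniform in $v\in\supp\eta$, and the $L^{q_j}$ estimate for the good piece collapses. This is precisely what the paper warns about in Subsection~\ref{0.example}: measuring degeneracy by $\LL(i\tau',i\k',v)$ alone is ``not adequate'' when $\bbf(v)\not\equiv0$.

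The paper's Section 5 proof instead cuts by the \emph{restricted normalized elliptic symbol} $(\wt{R\cE})(i\k,v)=\dfrac{P_M\k\cdot\bbf(v)\,P_M\k}{|P_M\k|^2}$ (the quantitative version of the $\psi(\bbf(v)/\d)$ idea in \eqref{0.9}, which genuinely tests diffuseness) and then by the \emph{restricted normalized symbol} $(\wt{R\LL})(i\tau,i\k,v)$ for the residual hyperbolicity in $\R\times M^\perp$; only on the region where both survive does the domination $(\rho^2+1)^{1/2}+|P_M\k|^2\le C_{\d,\g}|\LL|$ hold (compare the adapted version of Lemma~\ref{1.7.boundlemma} quoted there). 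This also repairs a second gap in your argument: the nondegeneracy hypothesis \eqref{1.nondegp} constrains $\tau+(P_{M^\perp}\abf)(v)\cdot\k$ and $\k\cdot\bbf(v)\k$, not the full $\tau+\abf(v)\cdot\k$, so it controls the measure of the set where the \emph{restricted} symbols are small but not, as you assert, the set $\{v:|\wt\LL(i\tau,i\k,v)|\le\d\}$; your dominated-convergence argument for the degenerate piece is therefore not supported by the stated hypothesis. Rebuilding your decomposition with the two-stage restricted cutoff aligns it with the paper and closes both gaps.
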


\begin{remark}[On the nondegeneracy condition \ref{1.nondegp}]
	In a nutshell, the nondegeneracy condition \eqref{1.nondegp} forces that the symbol $(\tau, \k, v) \mapsto i(\tau + (P_{M^\perp}\abf)(v)\cdot\k) + \k \cdot \bbf(v)\k$ to obey the usual imposition \eqref{1.nondeg}, thus exempting  any restriction on $(P_{M} \abf)(v)$ (the component of $\abf(v)$ which acts on the ``parabolic'' variables). In accordance to the particular behavior of \eqref{1.eqfpar}, the usage of the localizing functions $\Pi_j$ could be dispensed.
\end{remark}

	Let us also state a local version of the previous theorem.

\begin{theorem} [The local ``single-phase'' averaging lemma] \label{1.secondparthm}
	Let $\JJ$ be finite index set, and let be given exponents $1 < p, q_j < \infty$ $(j \in\JJ)$, $1 \leq r \leq 2$ and $\ell \geq 0$. Assume that $\abf \in \Cc_\loc^{k, \a}(\R; \R^N)$ and $\bbf \in \Cc_\loc^{k, \a}(\R; \mathscr{L}(\R^N))$, where the real numbers $k$ and $\a$ satisfy the relation \eqref{1.condka}. Furthermore, suppose that there exists a linear subspace $M \subset \R^N$, such that $R(\bbf(v)) \subset M$ and $\bbf(v)$ is nonnegative for all $v \in \R$. Let $Q \subset \R_t\X\R_x^N$ be an open set.
	
	Assume that, for any $n \in \N$, Equation \eqref{1.eqfpar} is obeyed in $\mathscr D'(Q\X\R_v)$, where
	\begin{enumerate}
		\item  $(f_n)_{n\in\N}$ is a bounded sequence in $L^r(\Omega; L_\loc^p(Q\X\R_v))$, such, for any $\phi \in \Cc_c^\infty(Q)$ and $\z \in \Cc_c^\infty(\R)$, $(-\Delta_v + 1)^{-\ell/2}(1 + (-\Delta_{t,x})^{1/2} - \Delta_M)^{-1}(\phi \z f_n)$ is relatively compact in $L^r(\Omega; L^p(\R_t\X\R_x^N\X\R_v))$,
		\item for all $j \in \JJ$, $(g_{j,n})_{n \in \N}$ is a relatively compact sequence in $L^r(\Omega; L^{q_j}(\R_t$ $\X\,\,\R_x^N\X\R_v))$, and
		\item  $(\Phi_n)_{n \in \N}$ is a predictable and relatively compact sequence in $L^2(\Omega\X [0,\infty)_t;$ $HS(\mathscr H;$ $L^2(\R_x^N \X \R_v)))$.
	\end{enumerate}
	
	Finally, let $\eta \in L^{p'}(\R)$ have compact support, and presume that the \textnormal{nondegeneracy condition} \eqref{1.nondegp} holds.
	
	Then, with $s$ being the least number between $p$, $q_j$ $(j \in \JJ)$, and $2$, the sequence of averages $\left(\int_\R f_n\eta \, dv \right)_{n \in \N}$ is relatively compact in $L^r(\Om; L_\loc^s(Q))$. In particular, if $(f_n)_{n\in\N}$ is bounded in $L^r(\Omega; L^p(Q\X\supp \eta))$, and $Q$ is of finite measure, then $(\int_\R f_n\eta \, dv )_{n \in \N}$ converges in $L^r(\Om; L^z(Q))$ for any $1 \leq z < p$.
\end{theorem}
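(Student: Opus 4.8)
The plan is to deduce Theorem~\ref{1.secondparthm} from its global counterpart, Theorem~\ref{1.firstparthm}, by a localization argument in the variables $(t,x)$ and $v$, exactly parallel to the passage from Theorem~\ref{1.firstthm} to Theorem~\ref{1.secondthm}. I would fix $\phi\in\Cc_c^\infty(Q)$ and $\z\in\Cc_c^\infty(\R_v)$ with $\z\equiv1$ on a neighbourhood of $\supp\eta$; then $\z\eta=\eta$, so $\int_\R(\phi\z f_n)\,\eta\,dv=\phi\int_\R f_n\,\eta\,dv$, and since $\phi$ is arbitrary it suffices to show that $\bigl(\phi\int_\R f_n\,\eta\,dv\bigr)_n$ is relatively compact in $L^r(\Om;L^s(\R_t\X\R_x^N))$. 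Put $F_n:=\phi\z f_n$. By the definition of $L^r(\Om;L_\loc^p(Q\X\R_v))$, the sequence $(F_n)_n$ is bounded in $L^r(\Om;L^p(\R_t\X\R_x^N\X\R_v))$, it is supported in $\supp\phi$ in $(t,x)$ and in $\supp\z$ in $v$, and, testing \eqref{1.eqfpar} against functions $\phi\,\psi$ with $\psi\in\Cc_c^\infty(\R_t\X\R_x^N\X\R_v)$, $F_n$ satisfies a global equation on all of $\R_t\X\R_x^N\X\R_v$.

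To identify that equation, set $L_v:=\del_t+\abf(v)\cdot\nabla_x-\bbf(v):D_x^2$. Since $L_v$ carries no $v$-derivatives it commutes with multiplication by $\z(v)$, so, writing $R_n$ for the right-hand side of \eqref{1.eqfpar}, one gets $L_vF_n=\phi\z R_n+[L_v,\phi](\z f_n)$ with
\[
[L_v,\phi](\z f_n)=(\del_t\phi)\,\z f_n+(\abf(v)\cdot\nabla_x\phi)\,\z f_n-(\bbf(v):D_x^2\phi)\,\z f_n-2\,(\bbf(v)\nabla_x\phi)\cdot\nabla_x(\z f_n).
\]
The first three commutator terms are products of a function that is smooth with compact support in $(t,x)$, of class $\Cc_\loc^{k,\a}$ with compact support in $v$, and $\z f_n$; the last, after the product rule, equals $-2\,\div_x\bigl((\bbf(v)\nabla_x\phi)\z f_n\bigr)+2(\bbf(v):D_x^2\phi)\z f_n$, and since $R(\bbf(v))\subset M$ the vector field $(\bbf(v)\nabla_x\phi)\z f_n$ is $M$-valued, so this divergence involves only the directions of $\Delta_M$. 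Commuting $\phi$ through the operators $(-\Delta_{t,x}+1)^{1/2}-\Delta_M$ and $(-\Delta_x+1)^{1/4}+(-\Delta_M)^{1/2}$ in $R_n$, and $\z$ through $(-\Delta_v+1)^{\ell/2}$, likewise produces $\phi\z R_n$ plus commutators of nonpositive order.

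The core step is to absorb every term so obtained into an admissible right-hand side of the form of \eqref{1.eqfpar}. First, $1+(-\Delta_{t,x})^{1/2}-\Delta_M$ and $(-\Delta_{t,x}+1)^{1/2}-\Delta_M$ are comparable as $L^p(\R_{t,x})$-Fourier multipliers (a Lizorkin-type estimate, cf.\ Subsection~\ref{0.example}), so hypothesis (1) says precisely that for every $\psi\in\Cc_c^\infty(Q)$, $\z_1\in\Cc_c^\infty(\R_v)$ the sequence $g_n^{0}:=(-\Delta_v+1)^{-\ell/2}\bigl((-\Delta_{t,x}+1)^{1/2}-\Delta_M\bigr)^{-1}(\psi\z_1 f_n)$ is relatively compact in $L^r(\Om;L^p(\R_t\X\R_x^N\X\R_v))$. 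Choosing $\psi\equiv1$ on $\supp\phi$ and $\z_1\equiv1$ on $\supp\z$ and using $\psi\z_1 f_n=\bigl((-\Delta_{t,x}+1)^{1/2}-\Delta_M\bigr)(-\Delta_v+1)^{\ell/2}g_n^0$, I would rewrite each commutator term by pushing the $\Cc_\loc^{k,\a}$-in-$v$ coefficients past $(-\Delta_v+1)^{\pm\ell/2}$ (which, by Proposition~\ref{1.7.prop}, yields bounded operators) and the smooth-in-$(t,x)$ coefficients past $(-\Delta_{t,x}+1)^{1/2}-\Delta_M$ (which, after composition with $\bigl((-\Delta_{t,x}+1)^{1/2}-\Delta_M\bigr)^{-1}$, yields operators of nonpositive order), so that the term becomes a fixed continuous linear image of $g_n^0$; for the divergence term one uses in addition that $\bigl((-\Delta_{t,x}+1)^{1/2}-\Delta_M\bigr)^{-1}\div_x$, restricted to $M$-valued fields, is a bounded $L^p$-multiplier — one power of $-\Delta_M$ sufficing to absorb the $M$-divergence. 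Each term then plays the role of one of the $g_{j,n}$ in \eqref{1.eqfpar}, with a relatively compact datum; $\phi\z R_n$ is brought back to admissible form in the same way, the stochastic datum remaining relatively compact in $L^2(\Om\X[0,\infty);HS(\mathscr H;L^2(\R_x^N\X\R_v)))$. Theorem~\ref{1.firstparthm} would then apply to $F_n$ with this data (the new deterministic exponents all equalling $p$, so $s=\min(p,\{q_j\}_j,2)$ is unchanged, and with the same $\eta$, so \eqref{1.nondegp} is inherited), giving that $\bigl(\varphi_0\int_\R F_n\,\eta\,dv\bigr)_n$ is relatively compact in $L^r(\Om;L^s(\R_t\X\R_x^N))$ for every $\varphi_0\in(L^1\cap L^\infty)(\R_t\X\R_x^N)$; taking $\varphi_0$ of compact support with $\varphi_0\equiv1$ near $\supp\phi$ and recalling $\int_\R F_n\,\eta\,dv=\phi\int_\R f_n\,\eta\,dv$ yields the local conclusion as $\phi$ ranges over $\Cc_c^\infty(Q)$. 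For the final assertion, Hölder's inequality in $v$ (with $\eta\in L^{p'}(\R)$ of compact support) gives $\bigl\|\int_\R f_n\,\eta\,dv\bigr\|_{L^p(Q)}\le\|\eta\|_{L^{p'}}\|f_n\|_{L^p(Q\X\supp\eta)}$, so the averages are bounded in $L^r(\Om;L^p(Q))$; combined with the relative compactness in $L^r(\Om;L_\loc^s(Q))$, a diagonal extraction over an exhaustion of $Q$ followed by the Vitali convergence theorem (the $L^p$-bound together with $z<p$ and $|Q|<\infty$ providing uniform integrability) upgrades this to relative compactness in $L^r(\Om;L^z(Q))$ for every $1\le z<p$.

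The hard part will be the absorption step: checking that every term produced by commuting the cut-offs $\phi(t,x)$ and $\z(v)$ past $L_v$ and past the anisotropic, non-homogeneous elliptic operators on the right of \eqref{1.eqfpar} genuinely has the admissible form with relatively compact data. The two delicate points are that the coefficients $\abf(v)$, $\bbf(v)$ (and $\bbf(v)\nabla_x\phi$) are merely Hölder in $v$, so commuting $(-\Delta_v+1)^{\pm\ell/2}$ past them requires the pseudodifferential estimates of Proposition~\ref{1.7.prop} rather than an exact Leibniz rule; and that the first-order commutator $(\bbf(v)\nabla_x\phi)\cdot\nabla_x(\z f_n)$ must be absorbed by a single power of the elliptic symbol, which succeeds precisely because $R(\bbf(v))\subset M$ makes it an $M$-divergence and the operator $1+(-\Delta_{t,x})^{1/2}-\Delta_M$ of hypothesis (1) contains the full Laplacean $-\Delta_M$ — this is exactly why hypothesis (1) is the natural assumption in the single-phase setting.
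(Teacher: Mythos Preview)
Your proposal is correct and follows essentially the same route as the paper. The paper treats Theorem~\ref{1.secondparthm} by remarking that the localization argument of Section~\ref{Sec.Proof2} (reducing Theorem~\ref{1.secondthm} to Theorem~\ref{1.firstthm}) carries over verbatim---indeed more simply---to reduce Theorem~\ref{1.secondparthm} to Theorem~\ref{1.firstparthm}; your write-up spells out precisely this reduction, correctly identifying that the commutator $(\bbf(v)\nabla_x\phi)\cdot\nabla_x(\z f_n)$ is an $M$-divergence absorbed by the $-\Delta_M$ in hypothesis~(1), and invoking Proposition~\ref{1.7.prop} for the $v$-commutators. The only cosmetic difference is that the paper localizes with $\vartheta^2$ rather than a single cut-off $\z$, but this extra factor is needed only in the two-phase case (to produce coefficients of the form $\vartheta\bbf_{\mu,\nu}$ playing the role of $\Pi_j$) and is immaterial here.
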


\begin{remark} [On the hypotheses on $\bbf(v)$]
	In the theory of flow in porous media, the matrix $\bbf(v)$ only degenerates in a single point. Therefore, $\bbf(v)$ evidently obeys the nontransiency condition, and both lines of theorem apply, even though Theorems \ref{1.firstparthm} and \ref{1.secondparthm} are likely preferable. On the other hand, in sedimentation-consolidation processes, $\bbf(v)$ has the isotropic form
	\begin{equation}
	\bbf(v) = \mathbf q(v) I_{\R^N}, \label{0.bsedimentation}
	\end{equation}
	with $\mathbf q : \R \to \R$ satisfying $\mathbf q(v) > 0$ in some interval $I$, and $\mathbf q(v) = 0$ outside of $I$. Clearly again $\bbf(v)$ observes the nontransiency condition, and Theorems \ref{1.firstthm} and \ref{1.secondthm} are available. 
	
	
	On a more theoretical note, let us point out that, in contrast with Theorems \ref{1.firstthm} and \ref{1.secondthm}, it is permissible that $R(\bbf(v)) \neq M$ everywhere. By way of illustration, if $N=2$ and $M = \R^2$,
	$$\bbf(v) = \begin{pmatrix}
	v^2 & v^3\\
	v^3 & v^4
	\end{pmatrix}$$
	satisfies the conditions of the last two theorems, in spite of $\dim R(\bbf(v)) < 2$ for all $v \in \R$.
\end{remark}

\subsection{Outline of the paper} This manuscript is organized as follows. In Section 2, we will demonstrate Theorem \ref{1.firstthm}. Subsequently, in Section 3, we will show how to reduce Theorem \ref{1.secondthm} to Theorem \ref{1.firstthm}. In Section 4, we will concisely delineate the proof of both Theorems \ref{1.firstparthm} and \ref{1.secondparthm}, once they are almost identical the corresponding arguments of Theorems \ref{1.firstthm} and \ref{1.secondthm}. Finally, in Section \ref{1.secremarks}, we will discuss several details of the statement and proofs of such theorems; in particular, we will compare these results with theories of \textsc{P.-L. Lions--B. Perthame--E. Tadmor} \cite{LPT} and of \textsc{E. Tadmor--T. Tao} \cite{TT}.

\section{Proof of Theorem \ref{1.firstthm}}

First of all, passing to a subsequence if necessary, we may assume that, for all $j \in \JJ$, $(g_{j, n})_{n \in \N}$ and $(h_{j, n})_{n \in \N}$ are convergent in $L^r(\Om; L^{q_j}(\R_t\X\R_x^N\X\R_v))$, and that $(\Phi_n)_{n \in \N}$ is equally convergent in $L^2(\Om\X[0,\infty); HS(\mathscr H; L^2(\R_x^N\X\R_v)))$. Accordingly, the conclusions of Theorem \ref{1.firstthm} will be accomplished once we verify that, for any $\varphi \in (L^1\cap L^\infty)(\R_t\X\R_x^N)$, the averages $\varphi \int_\R f_n \eta \, dv$ define a convergent sequence in $L^r(\Om; L^s(\R_t\X\R_x^N))$.

	\subsection{The decomposition of the average}

	In this subsection, we compartmentalize  $\int_\R  f_n \eta\, dv$ into components whose a priori estimates may be extracted from different hypotheses made in statement of the  Theorem \ref{1.firstthm}. In this fashion, the desired conclusion is established via a proper passage to the limit. 
	
	Let us define the differences
	\begin{equation}
	\ff_{m,n}(t,x,v) = f_m(t,x,v) - f_n(t,x,v). \label{1.2.defg}
	\end{equation}
	Once \eqref{1.eqf} is linear, the elementary harmonic analysis asserts that each $\ff_{m,n}$ obeys
	\begin{align}
	\bigg(\frac{\del }{\del t} + \abf \cdot \nabla_x - \bbf : D_x^2 \bigg)\ff_{m,n}   &= \sum_{j \in \JJ} ( - \Delta_{t,x} + 1)^{1/2}  \left[1 \pm \left(\frac{\del^{\lf}}{\del v^{\lf}} (-\Delta_v)^{\zz/2}  \right) \right] \gf_{m,n}^{(j)} \nonumber\\ &+ \sum_{j \in \JJ}\Pi_j(v) \left( \Delta_M \right)  \left[1 \pm \left(\frac{\del^{\lf}}{\del v^{\lf}} (-\Delta_v)^{\zz/2}  \right) \right] \hh_{m,n}^{(j)} \nonumber \\ 
	& + ( - \Delta_{x} + 1)^{1/2} \left[1 \pm \left(\frac{\del^{\lf}}{\del v^{\lf}} (-\Delta_v)^{\zz/2}  \right) \right] \left( \Psi_{m,n}  \frac{dW}{dt} \right),  \label{1.2.eqgmn}
	\end{align}
	with the indices $\lf \in \Z $ and $0 \leq \zz <1$ being such that $\lf + \zz = \ell$, the sign $\pm$ being
	$$ \pm =
	\begin{dcases*}
	+, &\text{if } $\lf \equiv 0 \text{ mod }  4$,\\
	\text{arbitrary,} &\text{if } $\lf \equiv 1 \text{ mod } 4 \text{ or } 3 \text{ mod } 4$, \text{and} \\
	-, &\text{if } $\lf \equiv 2 \text{ mod }  4$,
	\end{dcases*}
	$$
	and, at last, each  $(\gf_{m,n}^{(j)})_{m, n \in \N}$, $(\hh_{m,n}^{(j)})_{m,n\in\N}$ and $(\Psi_{m,n})_{m,n\in\N}$ satisfying for all $j \in \JJ$
	\begin{align}
	&\lim\limits_{m, n \to \infty } \bbE \bigg( \int_{\R_t} \int_{\R_x^N} \int_{\R_v} |\gf_{m,n}^{(j)}(t, x, v)|^{q_j}\, dv dx dt \bigg)^{r/q_j} = 0, \label{1.2.limg}\\
	&\lim\limits_{m, n \to \infty } \bbE \bigg( \int_{\R_t} \int_{\R_x^N} \int_{\R_v} |\hh_{m,n}^{(j)}(t, x, v)|^{q_j}\, dv dx dt \bigg)^{r/q_j} = 0, \text{ and} \label{1.2.limh}\\
	&\lim\limits_{m, n \to \infty } \bbE \int_{0}^\infty \left\Vert  \Psi_{m,n}(t) \right\Vert_{HS(\mathscr H; L^2(\R_x^N\X\R_v))}^2\, dt = 0. \label{1.2.limPhi}
	\end{align}
	
	\subsubsection{The mollification of the weigh function $\eta$.}
	Let us now introduce a certain smooth approximation of $\eta$ which will allows us to handle the operator $\frac{\del^\lf}{\del v^\lf}(-\Delta_v)^{\zz/2}$ via integration by parts. This mollification, which we will symbolize by $\eta_{\d,\g}$---as it will depend on two parameters $\g$ and $\d$---, has a quite special support, whose role in our analysis can hardly be exaggerated.

	\begin{lemma} \label{1.2.1.molleta}
		Let $N \geq 1$ be an integer, $1 < p < \infty$, $\eta \in L^{p'}(\R)$ have compact support, and $\bbf : \R \to \mathscr{L}(\R^N)$ be continuous matrix function which has a dichotomous range and satisfies the nontransiency condition in $\supp \eta$. Let $\chi > 0$ be given. 
		
		For any $0 < \d$ and $\g < 1$, there exist functions $\nn_\g$ and $\eta_{\d,\g}$ in $L^{p'}(\R)$ for which the following assertions hold.
		\begin{enumerate}
			\item[(a)] Regarding $\nn_\g$:
			\begin{enumerate}
				\item[(a.i)] $\nn_\g$ in $L^\infty(\R)$ with $\Vert \nn_\g \Vert_{L^\infty(\R_v)} \leq \g^{-\chi}$;
				\item[(a.ii)] $\supp \nn_\g \subset \supp \eta$;
				\item[(a.iii)] $\Vert \nn_\g - \eta \Vert_{L^{p'}(\R)} \to 0$ as $\g \to 0_+$.
			\end{enumerate}
			\item[(b)] Regarding $\eta_{\d, \g}$:
			\begin{enumerate}
				\item [(b.i)]   $\eta_{\d, \g} \in \Cc_c^\infty(\R)$ and $\Vert \eta_{\d,\g} \Vert_{L^\infty(\R)} \leq \Vert \nn_\g \Vert_{L^\infty(\R)}$;
				\item [(b.ii)]  $\supp \eta_{\d,\g} \subset \supp \eta + (-\d,\d)$ and is the disjoint union of two compact sets $K_h = K_h^{(\d)}$ and $K_p = K_p^{(\d)}$, for which
				\begin{equation}
				\begin{dcases} \label{1.2.1.alternative}
				\bbf(v) \equiv 0 \text{ identically if $v \in K_h$, and} \\
				\bbf(v) \geq \cc_\d P_M \text{ whenever $v \in K_p$,}
				\end{dcases}
				\end{equation}
				where $\cc_\d > 0$ depends only on $\d$, and $M$ is the effective range of $\bbf$;
				\item [(b.iii)] for any $0 < \g < 1$ fixed, $\Vert n_{\d,\g} - \nn_\g \Vert_{L^{p'}(\R)} \to 0$ as $\d \to 0_+$.
			\end{enumerate}
		\end{enumerate}
	\end{lemma}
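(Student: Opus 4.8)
The plan is to construct $\nn_\g$ first by truncating $\eta$ away from the "transition set" $F \cap \supp\eta$, where $F$ is the boundary of $\{\bbf = 0\}$, and then to obtain $\eta_{\d,\g}$ by a standard mollification that respects the decomposition of $\R\setminus F$ into the open sets $\{\bbf = 0\}^\circ$ and $\{\bbf \neq 0\}$. For step (a), fix $\chi>0$ and set $\nn_\g := \eta \cdot \mathbf{1}_{\{|\eta| \leq \g^{-\chi}\}} \cdot \mathbf{1}_{U_\g}$, where $U_\g$ is an open neighborhood of $F\cap\supp\eta$ whose Lebesgue measure shrinks to $0$ as $\g\to 0_+$; such a neighborhood exists precisely because the nontransiency condition guarantees $\meas(F\cap\supp\eta)=0$, so one may take $U_\g$ with $\meas U_\g < \g$, say. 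Then (a.i) and (a.ii) are immediate, and (a.iii) follows from dominated convergence: $|\nn_\g-\eta|^{p'} \leq 2^{p'}|\eta|^{p'} \in L^1$, and $\nn_\g\to\eta$ pointwise a.e. since both the sublevel condition and $v\notin U_\g$ eventually hold for a.e. $v$ (as $\meas U_\g \to 0$, pass to $\g$ along a sequence and use that a.e. $v$ lies outside $U_\g$ for all large indices after intersecting; alternatively bound $\|\nn_\g-\eta\|_{L^{p'}}$ directly by $\|\eta\mathbf{1}_{\{|\eta|>\g^{-\chi}\}}\|_{L^{p'}} + \|\eta\mathbf{1}_{U_\g\setminus\text{small}}\|_{L^{p'}}$, each of which $\to 0$).

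For step (b), the key observation is that once we are bounded away from $F$, the set $\supp\nn_\g$ (which by (a.ii) avoids a neighborhood of $F$ up to the null set already removed — more precisely, after the construction above $\supp\nn_\g$ is contained in the \emph{closed} set $\supp\eta\setminus U_\g'$ for a slightly smaller $U_\g'$, hence $\dist(\supp\nn_\g, F)=:\rho_\g>0$) splits as a disjoint union $\supp\nn_\g = (\supp\nn_\g\cap\{\bbf=0\}) \sqcup (\supp\nn_\g\cap\{\bbf\neq 0\})$ with these two pieces at positive distance from each other whenever $\d < \rho_\g/2$. Define $\eta_{\d,\g} := \nn_\g * \rho_\d$ with $\rho_\d$ a standard mollifier supported in $(-\d,\d)$. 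Then (b.i) is the classical mollification estimate $\|\eta_{\d,\g}\|_{L^\infty} \leq \|\nn_\g\|_{L^\infty}\|\rho_\d\|_{L^1} = \|\nn_\g\|_{L^\infty}$ together with smoothness, and (b.iii) is the standard $L^{p'}$-convergence of mollifications. For (b.ii), set $K_h := \supp\eta_{\d,\g}\cap\overline{\{\bbf=0\}^\circ}$ and $K_p := \supp\eta_{\d,\g}\cap\overline{\{\bbf\neq 0\}}$ after shrinking $\d$; since the two pieces of $\supp\nn_\g$ are at distance $>2\d$, their $\d$-neighborhoods remain disjoint, so $K_h\cap K_p=\emptyset$ and $K_h\sqcup K_p = \supp\eta_{\d,\g}$. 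The containment $\supp\eta_{\d,\g}\subset\supp\eta+(-\d,\d)$ is immediate from $\supp\nn_\g\subset\supp\eta$. For the dichotomy \eqref{1.2.1.alternative}: on $K_h$ we have $\bbf\equiv 0$ by continuity, since $K_h$ lies in the closure of the interior of $\{\bbf=0\}$ and is bounded away from $F=\partial\{\bbf=0\}$, forcing $K_h\subset\{\bbf=0\}^\circ$ (interior points only). On $K_p$, which is a compact subset of the open set $\{v:\bbf(v)\neq 0\}$, the function $v\mapsto \lambda_{\min}(\bbf(v)|_M)$ — the smallest eigenvalue of $\bbf(v)$ restricted to its effective range $M$ — is continuous and strictly positive (here we use the dichotomous range: $R(\bbf(v))=M$ exactly on $\{\bbf\neq 0\}$, so $\bbf(v)$ is positive definite on $M$ there), hence attains a positive minimum $\cc_\d>0$ on $K_p$, giving $\bbf(v)\geq\cc_\d P_M$ on $K_p$.

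The main obstacle is \textbf{the geometry of the supports} — ensuring simultaneously that (i) $\supp\nn_\g$ is bounded away from $F$ so that mollifying by a kernel of width $\d<\rho_\g/2$ cannot connect the hyperbolic and parabolic pieces, while (ii) the removal of the neighborhood $U_\g$ of $F$ costs only a vanishing amount in $L^{p'}$-norm as $\g\to 0$. Both are delivered by the nontransiency hypothesis $\meas(F\cap\supp\eta)=0$, but one must be careful that $\rho_\g$ may tend to $0$ as $\g\to 0$, so the order of quantifiers matters: $\g$ is fixed first (fixing $\rho_\g$), and only then is $\d$ sent to $0$ with $\d<\rho_\g/2$, which is exactly the structure of assertions (a.iii) versus (b.iii). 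A secondary technical point is that $\{\bbf=0\}$ need not be closed, so one works with $\overline{\{\bbf=0\}^\circ}$ and $\overline{\{\bbf\neq 0\}}$ and checks these two closures cover $\R\setminus F$ and overlap only within $F$; the dichotomous range hypothesis is what makes the eigenvalue bound on $K_p$ uniform and thus yields the single constant $\cc_\d$.
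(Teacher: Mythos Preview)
Your approach is essentially correct, but it differs structurally from the paper's in a way worth recording. You perform the geometric excision of a neighborhood of $F = \partial\{\bbf = 0\}$ already at the $\nn_\g$ stage (via the hard cutoff $\mathbf{1}_{\R\setminus U_\g}$ --- note the typo: your formula has $\mathbf{1}_{U_\g}$, which would be supported \emph{near} $F$ rather than away from it), so that a plain mollification $\nn_\g * \rho_\d$ suffices for $\eta_{\d,\g}$. The paper instead keeps $\nn_\g$ as the pure height truncation of $\eta$ at level $\g^{-\chi}$ (no geometric cutoff whatsoever), and inserts the separation from $F$ only at the $\eta_{\d,\g}$ stage, by multiplying the mollification $\varrho_\d\star\nn_\g$ by a smooth cutoff $\xi_\d(v) = H_\d(\dd(v))$ built from the \emph{regularized distance} $\dd$ to $F$ (the Calder\'on--Zygmund/Whitney construction, quoted from Stein). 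Your route is more elementary in that it avoids the regularized-distance machinery entirely; the price is that your separation scale is $\rho_\g = \dist(\supp\nn_\g, F)$, which depends on $\g$, so your (b.ii) holds only for $\d < \rho_\g/2$ and the lower bound $\cc$ on $K_p$ is really a $\cc_\g$ rather than a $\cc_\d$. This is harmless for the application in Theorem~\ref{1.firstthm} --- where $\d\to 0$ before $\g\to 0$, exactly as you observe --- but it does not literally match the lemma's assertion that $K_h^{(\d)}, K_p^{(\d)}, \cc_\d$ depend on $\d$ alone. The paper's construction, by tying the cutoff scale directly to $\d$ through $\xi_\d$, delivers this independence from $\g$. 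One small correction: since $\bbf$ is continuous, $\{\bbf = 0\}$ \emph{is} closed, so your ``secondary technical point'' about passing to closures is unnecessary.
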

	\begin{proof}
	In order to verify (a), it suffices to consider the truncations
		$$\nn_\g(v) = \begin{dcases}
		-\g^{-\chi} &\text{if $\eta(v) < -\g^{-\chi}$,} \\
		\eta(v) &\text{if $|\eta(v)| \leq \g^{-\chi}$, and} \\
		\g^{-\chi} &\text{if $\eta(v) > \g^{-\chi}$.}
		\end{dcases}$$
	The construction of $\eta_{\d, \g}$ is fairly more intricate. For this purpose, consider $(\varrho_{\ve})_{\ve > 0}$ to be standard mollifiers in the real line.
	
	Were it not for the asserted decomposition of the support of $\eta_{\d, \g}$, evidently we could have chosen this function to be $(\varrho_\d \star \eta_{\g})$. Indeed, if the boundary of $\{ v \in \supp \eta; \bbf(v) = 0 \}$ is empty, define $\eta_{\d, \g}$ as such. Otherwise, so as to obtain this extra attribute, let us localize $(\varrho_\d \star \eta_{\g})$ by means of the next proposition \textit{à} Whitney of \textsc{A.P. Calderón--A. Zygmund} \cite{CZ}, whose proof may also be found in the classic book of \textsc{E.M. Stein} \cite{Stn}.
	
	\begin{proposition}[The existence of the ``regularized distance'']
		Let $d$ be a positive integer, and $F \subset \R^d$ be a nonempty closed subset. There exists a continuous function $\dd : \R^d \to \R$ such that
		\begin{enumerate}
			\item $c_1 \operatorname{dist}(x, F) \leq  \dd(x) \leq c_2 \operatorname{dist}(x, F)$ for all $x \in \R^d$, 
			\item $\dd \in \Cc^\infty(\R^d \setminus F)$, and, for all multi-indices $\af = (\af_1, \ldots, \af_d)$,
			$$\left|(D^\af \dd) (x) \right| \leq B_\af  \operatorname{dist}(x, F)^{1 - |\af|} \text{ for all $x \in \R^N \setminus F$},$$
			where $c_1$, $c_2$, and $B_\af$ are positive constants which do not depend on $F$.
		\end{enumerate}
	\end{proposition}
	We will employ this result as follows. Put $d = 1$, and let $F$ be the boundary of $\{ v \in \R; \bbf(v) = 0 \}$. Once $F$ is a closed set, there exists a function $\dd(v)$ with the properties listed above.
	
	Given any $\ve > 0$, define $H_\ve : \R \to \R$ to be the regular approximations of the Heaviside function
	$$H_\ve(z) = \int_{0}^z \varrho_\ve(w - 2 \ve)\, dw,$$
	and introduce $\xi_\ve(v) = H_\ve(\dd(v))$. It is clear that $0 \leq \xi_\ve(v) \leq 1$ everywhere, and that $\xi_\ve(v) \to 1_{\R \setminus F}(v)$ pointwisely as $\ve \to 0_+$. In addition, for $\supp \varrho_\ve \subset (-\ve, \ve)$,  $\xi_\ve(v)$ actually vanishes if $\dist(v, F)$ is sufficiently small, hence $\xi_\ve \in \Cc^\infty(\R)$. Finally, because $F \cap \supp \eta$ is of measure zero (here is where the nontransiency condition is necessary),
	\begin{align}
	\Vert \xi_\d (\varrho_\d \star \nn_\g) - \nn_\g \Vert_{L^{p'}(\R)} &\leq \Vert \xi_\d \nn_\g - \nn_\g \Vert_{L^{p'}(\R)} + \Vert (\varrho_\d \star \nn_\g) - \nn_\g \Vert_{L^{p'}(\R)} \nonumber \\
	&\to 0 \text{ as $\d \to 0_+$}. \label{1.2.1.molletabiii}
	\end{align}
	Let us therefore define $\eta_{\d,\g}(v) = \xi_\d (v) (\varrho_\d \star \nn_\g)(v).$ Once now statements (b.i) and (b.iii) are easily verified for such $\eta_{\d,\g}$, all that remains to finalize the proof of this lemma is property (b.ii).
	
	To this end, perceive at first that $\supp \eta_{\d,\g} \subset \supp \eta + (-\d, \d)$ is a basic result in the theory of convolution integrals. Per the properties of $\xi_\d$, the support of $\eta_{\d,\g}$ is formed by the disjoint union of two closed sets, each of which, in virtue of the dichotomous range hypothesis, lies entirely in the interior of $\{ v \in \R; \bbf(v) = 0 \}$ or of $\{ v \in \R;  R(\bbf(v)) = M \}$. In case of the second alternative, being $\bbf(v)$ symmetric, $P_M \bbf(v) P_M$ can be seen as a linear isomorphism in $M$. Thus, the lower bound in \eqref{1.2.1.alternative} is derived from a simple continuity argument. 
\end{proof}

\subsubsection{The decomposition in the Fourier space.}

Likewise, it is crucial that we introduce the next partitioning in the frequencies variables, which depends how degenerate is Equation \eqref{1.eqf} in that given region. So as to express such a division, let us define three Fourier symbols. Henceforth, $M \subset \R^N$ will denote the effective range of $\bbf(v)$. Furthermore, recall the definition of the symbol $\LL(i\tau, i\k, v) = i(\tau + \abf(v)\cdot \k) + \k \cdot \bbf(v)\k$ as given in \eqref{2.defL}. 

\begin{definition} The symbols $(R\LL)(i\tau, i\k, v)$, $\wt \LL(i\tau, i\k, v)$ and $(\wt{R\LL})(i\tau,$ $i\k, v)$ ($\tau \in \R$, $\k \in \R^N$, and $v \in \R$) are defined as follows.
	\begin{enumerate}
		\item By $(R\LL)(i\tau, i\k, v)$, it will be understood the so-called \textit{restricted symbol}:
		\begin{equation}
		(R\LL)(i\tau, i\k, v) = i\big(\tau + (P_{M^\perp}\abf)(v) \cdot \k\big). \label{1.defRL}
		\end{equation}
		\item By $\wt \LL(i\tau, i\k, v)$, it will be understood the so-called \textit{normalized symbol}: 
		\begin{align}
		\wt\LL(i\tau, i\k, v) &= \LL\bigg( \frac{i\tau}{\sqrt{\tau^2 + |\k|^2}}, \frac{i\k}{\sqrt{\tau^2 + |\k|^2}},v \bigg). \label{1.defwtL}
		\end{align}
		\item By $(\wt {R\LL})(i\tau, i\k, v)$, it will be understood the so-called \textit{restricted normalized symbol}:
		\begin{equation}
		(\wt{R\LL})(i\tau, i\k, v) = (R\LL)\bigg(\frac{i\tau}{\sqrt{\tau^2 + |P_{M^\perp}\k|^2}}, \frac{i(P_{M^\perp}\k)}{\sqrt{\tau^2 + |P_{M^\perp} \k|^2}},v \bigg). \label{1.defwtrL}
		\end{equation}
	\end{enumerate}
\end{definition}
Choose two functions $\lambda$ and $\psi \in \Cc_c^\infty(\C; \R)$ such that
\begin{enumerate}
	\item $\lambda(z) = 1$ for $|z| < \frac{1}{2}$,
	\item $0\leq \lambda(z) \leq 1$ for $\frac{1}{2} \leq |z| \leq 1$, 
	\item $\lambda(z) = 0$ for $|z| > 1$, and
	\item $\lambda(z) +  \psi(z) = 1$ everywhere.
\end{enumerate}
For any $0 < \d$ and $\g < 1$, which will be fixed for now---but will be let go to $0$ eventually---, let us then write
\begin{equation*}
\ff_{m,n}(t,x,v) = \sum_{\nu=1}^4\ff_{m,n}^{(\nu)}(t,x,v),
\end{equation*}
where, with $\FF_{t,x}$ denoting the Fourier transform in $(t,x)$,
\begin{equation} \label{1.2.2.decompg}
\begin{dcases} 
\ff_{m,n}^{(1)} = \FF_{t,x}^{-1} \bigg[ \lambda\bigg( \frac{\sqrt{\tau^2 + |\k|^2}}{\g} \bigg) (\FF_{t,x}  \ff_{m,n}) \bigg],  \\
\ff_{m,n}^{(2)} = \FF_{t,x}^{-1} \bigg[ \psi\bigg( \frac{\sqrt{\tau^2 + |\k|^2}}{\g} \bigg) \lambda \bigg( \frac{\wt\LL(i\tau, i\k, v)}{\d} \bigg) (\FF_{t,x}  \ff_{m,n}) \bigg], \\
\ff_{m,n}^{(3)} = \FF_{t,x}^{-1} \bigg[ \psi\bigg( \frac{\sqrt{\tau^2 + |\k|^2}}{\g} \bigg) \psi \bigg( \frac{\wt \LL(i\tau, i\k, v)}{\d} \bigg) 
\\\quad\quad\,\,\,\quad\quad\quad\quad\quad\quad\quad\quad\quad\quad \lambda \bigg( \frac{(\wt{R\LL})(i\tau, i\k, v)}{\d} \bigg) (\FF_{t,x}  \ff_{m,n}) \bigg], \text{ and} \\
\ff_{m,n}^{(4)} = \FF_{t,x}^{-1} \bigg[ \psi\bigg( \frac{\sqrt{\tau^2 + |\k|^2}}{\g} \bigg) \psi \bigg( \frac{\wt \LL(i\tau, i\k, v)}{\d} \bigg)\\\quad\quad\,\,\,\quad\quad\quad\quad\quad\quad\quad\quad\quad\quad \psi \bigg( \frac{(\wt{R\LL})(i\tau, i\k, v)}{\d} \bigg) (\FF_{t,x}  \ff_{m,n}) \bigg].
\end{dcases} 
\end{equation}
Even though neither $\wt\LL(i\tau, i\k, v)$ nor $(\wt{R\LL})(i\tau, i\k, v)$ are defined in the entire space $\R_\tau\X\R_\k^N\X\R_v$, this does not pose a problem, as their domain is of total measure nonetheless. Recall that it is admissible to take the spatio-temporal Fourier transform of $\ff_{m,n}$, as it almost surely lies in $L^p(\R_t\X\R_x^N\X\R_v)$ and, consequently, defines almost surely a tempered distribution. The tacit affirmation that each $\ff_{m,n}^{(\nu)}$ is indeed a function will be justified afterwards.

\subsubsection{Conclusion.}
All things considered, we thus establish the decomposition
\begin{align}
\int_\R \eta\ff_{m,n}\, dv &= \int_{\R}  \ff_{m,n}(\eta - \eta_{\d, \g}) \, dv + \int_{\R}  \ff_{m,n}^{(1)}\eta_{\d, \g} \, dv \nonumber\\ &\quad\quad\quad\quad\quad\quad  + \int_{\R}  \ff_{m,n}^{(3)}\eta_{\d, \g} \, dv +  \int_{\R}  \ff_{m,n}^{(4)}\eta_{\d, \g} \, dv \nonumber \\
&\stackrel{\text{def}}{=} \vf_{m,n}^{(0)} + \vf_{m,n}^{(1)} + \vf_{m,n}^{(2)} + \vf_{m,n}^{(3)} + \vf_{m,n}^{(4)}. \label{1.2.decompv0}
\end{align}
As a consequence, the definition of $\ff_{m,n}$ \eqref{1.2.defg} yields
\begin{equation}
\varphi \bigg(\int_{\R} f_m\eta \, dv - \int_{\R} f_n \eta\, dv \bigg) =  \sum_{\nu=0}^4\varphi\vf_{m,n}^{(\nu)},\label{1.1.decompv}
\end{equation}
in such a manner that our main objection is reduced to the extraction of a priori estimates in $L_\om^rL_{t,x}^s$ for each $\varphi \vf_{m,n}^{(\nu)}$ as $m$ and $n \to \infty$.

\subsection{The analysis of $\vf_{m,n}^{(0)}$.} 

\begin{proposition} \label{1.3.prop}
	There exists a constant $C = C\left(\Vert \varphi \Vert_{L_{t,x}^1 \cap L_{t,x}^\infty}, \sup_{\nu \in \N} \Vert f_\nu \Vert_{L_\omega^r L_{t,x,v}^p}\right)$  such that, for all $m$ and $n \in \N$,
	\begin{equation}
	\bbE \Vert \varphi \vf_{m,n}^{(0)} \Vert_{L^s(\R_t\X\R_x^N)}^r \leq C \Vert \eta_{\d, \g} - \eta \Vert_{L^{p'}(\R)}^r. \label{1.3.estvf0}
	\end{equation}
\end{proposition}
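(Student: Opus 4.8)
The plan is to bound $\vf_{m,n}^{(0)} = \int_\R \ff_{m,n}(\eta - \eta_{\d,\g})\,dv$ pointwise in $(t,x)$ by Hölder's inequality in the velocity variable, and then integrate in $(t,x)$ and take the expectation. First, for almost every $(\omega, t, x)$ we have
\[
\Big| \vf_{m,n}^{(0)}(t,x) \Big| = \Big| \int_\R \ff_{m,n}(t,x,v)\,(\eta(v) - \eta_{\d,\g}(v))\,dv \Big| \leq \|\eta - \eta_{\d,\g}\|_{L^{p'}(\R_v)} \cdot \|\ff_{m,n}(t,x,\cdot)\|_{L^p(\R_v)}.
\]
Multiplying by $\varphi$ and using that $s \leq p$ together with $\varphi \in L^1 \cap L^\infty$, I would pass to the $L^s(\R_t\X\R_x^N)$ norm. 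The natural route is: since $s \le p$, interpolate or simply use that $\varphi$ is bounded and integrable so that $\|\varphi G\|_{L^s_{t,x}} \le \|\varphi\|_{L^a_{t,x}}\|G\|_{L^p_{t,x}}$ for the appropriate $a$ with $1/s = 1/a + 1/p$ (here $G(t,x) = \|\ff_{m,n}(t,x,\cdot)\|_{L^p_v}$), and $\varphi \in (L^1\cap L^\infty) \subset L^a$ for every $a \in [1,\infty]$; so $\|\varphi\|_{L^a_{t,x}}$ is a finite constant depending only on $\|\varphi\|_{L^1\cap L^\infty}$.

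Next I would observe that $\|G\|_{L^p(\R_t\X\R_x^N)} = \big(\int_{\R_t}\int_{\R_x^N} \|\ff_{m,n}(t,x,\cdot)\|_{L^p_v}^p \,dx\,dt\big)^{1/p} = \|\ff_{m,n}\|_{L^p(\R_t\X\R_x^N\X\R_v)}$, by Fubini. Hence, almost surely,
\[
\|\varphi \vf_{m,n}^{(0)}\|_{L^s_{t,x}} \leq C\,\|\varphi\|_{L^1 \cap L^\infty}\, \|\eta - \eta_{\d,\g}\|_{L^{p'}(\R)}\, \|\ff_{m,n}\|_{L^p(\R_t\X\R_x^N\X\R_v)}.
\]
Then I would raise to the $r$-th power, take $\bbE$, and use $\|\ff_{m,n}\|_{L^p_{t,x,v}} \leq \|f_m\|_{L^p_{t,x,v}} + \|f_n\|_{L^p_{t,x,v}}$ so that, since $(f_\nu)$ is bounded in $L^r(\Om; L^p(\R_t\X\R_x^N\X\R_v))$, one gets $\bbE\|\ff_{m,n}\|_{L^p_{t,x,v}}^r \leq 2^r \cdot 2\,\sup_\nu \|f_\nu\|_{L^r_\omega L^p_{t,x,v}}^r$. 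Collecting the constants into a single $C = C(\|\varphi\|_{L^1\cap L^\infty}, \sup_\nu \|f_\nu\|_{L^r_\omega L^p_{t,x,v}})$ yields \eqref{1.3.estvf0}. The estimate is deterministic in the sense that $\|\eta - \eta_{\d,\g}\|_{L^{p'}}$ pulls out of the expectation cleanly.

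There is no serious obstacle here; this is the one genuinely soft step of the proof, relying only on Hölder, Fubini and the $L^r(\Om; L^p)$-boundedness hypothesis on $(f_n)$. The only minor care needed is to make sure the combination of exponents is handled correctly when $s < p$ (one uses $\varphi \in L^a$ with $a$ determined by $1/s = 1/a + 1/p$, which is fine since $\varphi \in L^1 \cap L^\infty$ embeds into every $L^a$), and to note that when $s = p$ one simply takes $a = \infty$ and uses $\|\varphi\|_{L^\infty}$. The point of isolating this as a proposition is that in the final passage to the limit one will first send $n, m \to \infty$ for the other pieces $\vf_{m,n}^{(\nu)}$ ($\nu \geq 1$) with $\d, \g$ fixed, and only afterwards send $\d \to 0$ and $\g \to 0$, at which stage \eqref{1.3.estvf0} together with Lemma \ref{1.2.1.molleta}(a.iii) and (b.iii) forces $\vf_{m,n}^{(0)} \to 0$ uniformly in $m,n$.
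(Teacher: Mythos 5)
Your argument is correct and is essentially the paper's own proof: the paper packages the pointwise Hölder-in-$v$ plus Fubini step into Lemma \ref{1.3.lemma} (applied with $\sigma = p$), then multiplies by $\varphi$ and uses $\varphi \in L^1 \cap L^\infty \subset L^a$ for the exponent $a$ with $1/s = 1/a + 1/p$ to pass from $L^p_{t,x}$ to $L^s_{t,x}$, exactly as you do. The only cosmetic difference is that you unpack the Hölder/Fubini computation explicitly rather than invoking the lemma, and your constant tracking ($2^r \cdot 2$ vs.\ $2^r$) differs by an immaterial factor from the slightly sharper $\|\ff_{m,n}\|^r \leq 2^{r-1}(\|f_m\|^r + \|f_n\|^r)$; both are absorbed into $C$.
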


By virtue of Lemma \ref{1.2.1.molleta}, this is an adroit estimate as $\d$ and $\g$ separately tend to $0_+$. Before we demonstrate this bound for $\vf_{m,n}^{(0)}$, let us state the following elementary yet quite useful estimate, whose proof is an immediate corollary to the Hölder's inequality.

\begin{lemma} \label{1.3.lemma}
	For any exponent $1 \leq \sg \leq \infty$, $\phi \in L^{\sg'}(\R_v)$ and $\Lambda \in L^\sg(\R_t\X\R_x^N\X\supp \phi)$,
	\begin{equation*}
	\bigg\Vert \int_\R \phi(v) \Lambda(\,\cdot\,, \,\cdot\,,v)\, dv \bigg\Vert_{L^\sg(\R_t\X\R_x^N)} \leq \Vert \phi \Vert_{L^{\sg'}(\R_v)} \Vert \Lambda \Vert_{L^\sg(\R_t\X\R_x^N\X \supp \phi)}.
	\end{equation*}
	In particular, if $\Lambda \in L^\sg(\R_t\X\R_x^N\X\R_v)$,
	\begin{equation}
	\bigg\Vert \int_\R \phi(v) \Lambda(\,\cdot\,, \,\cdot\,,v)\, dv \bigg\Vert_{L^\sg(\R_t\X\R_x^N)} \leq \Vert \phi \Vert_{L^{\sg'}(\R_v)} \Vert \Lambda \Vert_{L^\sg(\R_t\X\R_x^N\X \R_v)}. \label{1.3.trivialineq}
	\end{equation}
\end{lemma}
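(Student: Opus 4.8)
The plan is to reduce the whole statement to a pointwise-in-$(t,x)$ application of H\"older's inequality in the velocity variable, followed by Tonelli's theorem. First I would settle the measurability bookkeeping: since $\Lambda \in L^\sg(\R_t\X\R_x^N\X\supp\phi)$, Tonelli's theorem guarantees that for almost every $(t,x)$ the slice $v \mapsto \Lambda(t,x,v)$ lies in $L^\sg(\supp\phi)$; as $\phi \in L^{\sg'}(\R_v)$ and $\sg$, $\sg'$ are conjugate exponents, H\"older's inequality then shows that $v \mapsto \phi(v)\Lambda(t,x,v)$ is integrable over $\supp\phi$ --- hence over $\R_v$, as $\phi$ vanishes off its support --- for almost every $(t,x)$, so that the inner integral $\int_\R \phi(v)\Lambda(\,\cdot\,,\,\cdot\,,v)\,dv$ is well defined a.e.\ and, again by Fubini, measurable in $(t,x)$.

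For $1 \leq \sg < \infty$ I would then fix $(t,x)$ and write, by H\"older in $v$,
$$\bigg| \int_\R \phi(v)\Lambda(t,x,v)\,dv \bigg| \leq \Vert \phi \Vert_{L^{\sg'}(\R_v)} \bigg( \int_{\supp\phi} |\Lambda(t,x,v)|^\sg \, dv \bigg)^{1/\sg}.$$
Raising both sides to the power $\sg$, integrating in $(t,x) \in \R_t\X\R_x^N$, and using Tonelli to recognize the resulting iterated integral as $\Vert \Lambda \Vert_{L^\sg(\R_t\X\R_x^N\X\supp\phi)}^\sg$, the asserted bound follows upon taking $\sg$-th roots. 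The endpoints are handled in the same way: for $\sg = \infty$ one replaces the $L^\sg$-norm of the slice by $\Vert \Lambda(t,x,\,\cdot\,) \Vert_{L^\infty(\supp\phi)}$, estimates this by $\Vert \Lambda \Vert_{L^\infty(\R_t\X\R_x^N\X\supp\phi)}$, and takes the essential supremum in $(t,x)$; for $\sg = 1$ one simply bounds $|\phi(v)| \leq \Vert \phi \Vert_{L^\infty(\R_v)}$ for a.e.\ $v$ and integrates in $v$ and then in $(t,x)$.

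The ``in particular'' assertion \eqref{1.3.trivialineq} is then immediate, since $\supp\phi \subset \R_v$ forces $\Vert \Lambda \Vert_{L^\sg(\R_t\X\R_x^N\X\supp\phi)} \leq \Vert \Lambda \Vert_{L^\sg(\R_t\X\R_x^N\X\R_v)}$ whenever the latter is finite. There is no genuine obstacle in this lemma; the only point deserving a line of care is the a.e.\ integrability of the inner integral, which is precisely what Tonelli --- applied separately to $|\phi|^{\sg'}$ and $|\Lambda|^\sg$ --- together with H\"older delivers. In short, this is exactly the ``immediate corollary of H\"older's inequality'' announced just before the statement, and the reason it is isolated is that the shape $\Vert \int \phi\,\Lambda\,dv \Vert \leq \Vert \phi \Vert\,\Vert \Lambda \Vert$, with the $L^\sg$-norm of $\Lambda$ taken only over $\R_t\X\R_x^N\X\supp\phi$, is the exact form repeatedly invoked in the a priori estimates for the pieces $\vf_{m,n}^{(\nu)}$.
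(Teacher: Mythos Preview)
Your proposal is correct and follows exactly the approach the paper indicates: the paper states that the lemma's ``proof is an immediate corollary to the H\"older's inequality'' and gives no further details, and your argument is precisely that --- H\"older in $v$ pointwise in $(t,x)$, followed by Tonelli. Your treatment is in fact more careful than the paper's one-line dismissal, since you spell out the measurability of the inner integral and the endpoint cases $\sg=1,\infty$.
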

\begin{proof}[Proof of Proposition \ref{1.3.prop}]
	Applying \eqref{1.3.trivialineq} to the definition of $\vf_{m,n}^{(0)}$, we deduce that
	\begin{align}
	\bbE \Vert \vf_{m,n}^{(0)} \Vert_{L^p(\R_t\X\R_x^N)}^r &\leq \Vert \eta_{\d, \g} - \eta \Vert_{L^{p'}(\R_v)}^r \bbE \Vert f_m - f_n \Vert_{L^p(\R_t\X\R_x^N\X\R_v)}^r \nonumber \\
	&\leq 2^r \Big(\sup_{\nu \in \N} \Vert f_\nu \Vert_{L_\omega^r L_{t,x,\xi}^p}^r \Big) \Vert \eta_{\d, \g} - \eta \Vert_{L^{p'}(\R_v)}^r; \nonumber
	\end{align}
	i.e., 
	\begin{equation*}
	\bbE \Vert \varphi \vf_{m,n}^{(0)} \Vert_{L^s(\R_t\X\R_x^N)}^r \leq 2^r\Vert \varphi \Vert_{L_{t,x}^1 \cap L_{t,x}^\infty}^r \Big(\sup_{\nu \in \N} \Vert f_\nu \Vert_{L_\omega^r L_{t,x,\xi}^p}^r \Big) \Vert \eta_{\d, \g} - \eta \Vert_{L^{p'}(\R)}^r,
	\end{equation*}
	which establishes \eqref{1.3.estvf0}.
\end{proof}

\subsection{The analysis of $\vf_{m,n}^{(1)}$.}
\begin{proposition} \label{1.4.prop0}
	Let $\phi \in \Cc_c^\infty(\C; \C)$, and $\ve > 0$. There exists a function $\mathfrak K \in \cap_{\nu = 0}^\infty W^{\nu,1}(\R_t\X\R_x^N)$ such that, for any $\Lambda \in \Ss(\R_t\X\R_x^N)$,
	$$\FF_{t, x}^{-1} \Bigg[\phi\bigg(\frac{\sqrt{\tau^2 + |\k|^2}}{\ve}\bigg) (\FF_{t, x} \Lambda ) \Bigg] = \ve^{N+1}(\mathfrak K(\ve\,\cdot\,,\ve\,\cdot\,) \star_{t,x} \Lambda).$$
	Moreover, for any integer $\nu \geq 0$,
	$$\Vert \mathfrak K \Vert_{W^{\nu,1}(\R_t\X\R_x^N)} \leq C(\nu, \supp \phi, \Vert \phi \Vert_{\Cc^{N+1}}).$$
\end{proposition}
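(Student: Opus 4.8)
The plan is to take $\mathfrak K$ to be the inverse Fourier transform (in the $(t,x)$-variables, which I identify with $\R^{N+1}$) of the compactly supported symbol $\zeta\mapsto\phi(|\zeta|)$, where $\zeta=(\tau,\k)$ and $|\zeta|=\sqrt{\tau^2+|\k|^2}$. With this choice the stated identity is essentially immediate: it combines the two textbook facts that a Fourier multiplier acts by convolution with the inverse transform of its symbol, and that replacing the symbol $m(\zeta)$ by $m(\zeta/\ve)$ turns the kernel $\FF^{-1}m$ into $\ve^{N+1}(\FF^{-1}m)(\ve\,\cdot\,)$. Since $\Lambda\in\Ss(\R_t\X\R_x^N)$, all the manipulations are licit, so this part is one line.

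The content of the proposition is the bound $\mathfrak K\in\cap_\nu W^{\nu,1}$. I would first reduce it to a Sobolev estimate of the symbol: for a fixed $\sigma\in\big(\tfrac{N+1}{2},\tfrac{N+1}{2}+1\big)$ one has the elementary inequality $\Vert\FF^{-1}h\Vert_{L^1(\R^{N+1})}\le C(N,\sigma)\Vert h\Vert_{H^\sigma(\R^{N+1})}$ (Cauchy--Schwarz against the weight $(1+|x|^2)^{-\sigma/2}$, which is in $L^2$ precisely because $2\sigma>N+1$, followed by Plancherel and the Bessel potential); and since $\partial^\alpha\mathfrak K$ is, up to a unimodular constant, the inverse transform of $\zeta^\alpha\phi(|\zeta|)$, the whole proposition follows once I show
\[
\Vert\zeta^\alpha\phi(|\zeta|)\Vert_{H^\sigma(\R^{N+1})}\le C(\nu,\sigma,\supp\phi)\,\Vert\phi\Vert_{\Cc^{N+1}}\qquad(|\alpha|\le\nu).
\]

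To prove that, I would peel off the leading Taylor terms of $\phi$ at the origin: with $\rho$ a fixed smooth cutoff equal to $1$ near $0$ and $K$ the least integer strictly greater than $\tfrac{N+1}{2}$ --- here I use that $N\ge1$, which forces $K\le N+1$ --- write $\phi(|\zeta|)=\rho(\zeta)\sum_{k=0}^{K-1}\tfrac{\phi^{(k)}(0)}{k!}|\zeta|^k+g_K(\zeta)$, so that near the origin $g_K$ is the Taylor remainder $R_K(|\zeta|)=O(|\zeta|^K)$ and away from it $g_K$ is smooth with compact support. Multiplying by $\zeta^\alpha$: for even $k$ the term $\zeta^\alpha|\zeta|^k\rho$ is smooth and compactly supported, for odd $k\ge1$ its Fourier transform is homogeneous of degree $-(N+1+k+|\alpha|)$ at infinity, so in either case it lies in $H^\sigma$ with a norm that does not involve $\phi$ (here the choice $\sigma<\tfrac{N+1}{2}+1\le\tfrac{N+1}{2}+k+|\alpha|$ is exactly what is needed), while its coefficient is bounded by $\Vert\phi\Vert_{\Cc^{N+1}}$; and $\zeta^\alpha g_K$ vanishes to order $K+|\alpha|$ at the origin and is smooth elsewhere, hence lies in $W^{K,\infty}(\R^{N+1})$ with compact support and norm $\lesssim\Vert\phi\Vert_{\Cc^{K}}\le\Vert\phi\Vert_{\Cc^{N+1}}$, therefore in $H^{K}\hookrightarrow H^\sigma$ because $\sigma<K$. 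Summing these finitely many contributions gives the displayed estimate, and with it the proposition.

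The one delicate point --- and the reason the crude estimate ``the inverse transform of a $\Cc^m_c$ symbol decays like $|x|^{-m}$'' is not enough --- is that $\zeta\mapsto\phi(|\zeta|)$ sits exactly on a Sobolev borderline: its worst singularity is the conical term $\phi'(0)|\zeta|$, which is in $H^\sigma_{\mathrm{loc}}(\R^{N+1})$ precisely for $\sigma<\tfrac{N+1}{2}+1$, so the argument must exploit the full unit of radial smoothness and cannot be wasteful. It is this margin, together with the inequality $\lceil\tfrac{N+3}{2}\rceil\le N+1$ for $N\ge1$, that makes the estimate cost only $\Vert\phi\Vert_{\Cc^{N+1}}$; and it is the extra vanishing supplied by the factor $\zeta^\alpha$ that prevents the required order of differentiability of $\phi$ from growing with $|\alpha|$, so the same $\Cc^{N+1}$-norm controls every $W^{\nu,1}$-norm of $\mathfrak K$.
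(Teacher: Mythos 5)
Your proof is correct but follows a genuinely different path from the paper's. The paper introduces an arbitrary polynomial $P$, shows that every multi-index derivative of length $N+1$ of $P\,\phi(|\zeta|)$ lies in $L^\sigma(\R^{N+1})$ for $1\le\sigma<\tfrac{N+1}{N}$ (the worst singularity being $\lesssim|\zeta|^{-N}$ near the origin), and then deduces from the Hausdorff--Young inequality that each $D^\beta\mathfrak K$ is dominated pointwise by $H_\beta(t,x)/(1+\sqrt{t^2+|x|^2})^{N+1}$ with $H_\beta\in L^{\mathfrak t}$ for $\mathfrak t>N+1$, after which H\"older gives $L^1$. You instead go through the Cauchy--Schwarz/Plancherel embedding $\|\FF^{-1}h\|_{L^1}\lesssim\|h\|_{H^\sigma}$ for $\sigma>\tfrac{N+1}{2}$ and prove the needed $H^\sigma$-bound by peeling off the Taylor polynomial of $\phi$ at $0$. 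Both arguments hinge on the same phenomenon — only $N+1$ derivatives of $\phi$ are available because the symbol $\phi(|\zeta|)$ has a conical singularity at the origin — but the paper buys a stronger pointwise decay estimate on the kernel almost for free, while your decomposition isolates exactly which piece of the symbol is the Sobolev-borderline offender ($\phi'(0)|\zeta|$), which arguably makes the sharpness of $\Cc^{N+1}$ more transparent. Two small points to tighten: you should choose $\sigma\in\big(\tfrac{N+1}{2},\min(K,\tfrac{N+1}{2}+1)\big]$ rather than just $\sigma\in\big(\tfrac{N+1}{2},\tfrac{N+1}{2}+1\big)$, since for $N$ even the latter interval contains values exceeding your $K$, making the embedding $H^K\hookrightarrow H^\sigma$ invoked for the remainder $g_K$ fail as stated; and your closing reference to $\lceil\tfrac{N+3}{2}\rceil\le N+1$ does not quite match your definition of $K$ (the least integer strictly above $\tfrac{N+1}{2}$), though the needed inequality $K\le N+1$ is indeed what holds for $N\ge1$.
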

\begin{proof}
	Put $\mathfrak G(\tau, \k) = \phi(\sqrt{\tau^2 + |\k|^2})$, and let $P(\tau, \k)$ be an arbitrary complex polynomial function. It is not hard to see that $P \mathfrak G \in W^{N+1,1}(\R_\tau \X \R_\k^N)$, and, for every multi-index $\af$ in $\R\X\R^N$ of length $N+1$, one has that
	$$\left| D^\af (P\mathfrak G)(\tau,\k) \right| \leq C_{A,P} \Vert \phi \Vert_{\Cc^{N+1}}  \frac{1_{(0,A)}(\sqrt{\tau^2 + |\k|^2})}{(\tau^2 + |\k|^2)^{\frac{N}{2}}},$$
	where $A > 0$ is any real number for which $\phi(z) = 0$ if $|z| > A$. Thus, $P \mathfrak G \in W^{N+1, \sg}(\R_\tau \X \R_\k^N)$ for any $1 \leq \sg < \frac{N+1}{N}$. As a result, the Haussdorf--Young inequality mingled with the Riemann--Lebesgue lemma asserts that $\mathfrak K = \FF_{t, x}^{-1} \mathfrak G$ satisfies the pointwise estimate
	$$|(D^\bb \mathfrak K)(t,x)| \leq \frac{H_\bb(t,x)}{(1 + \sqrt{t^2 + |x|^2})^{N+1}} \text{ for all $(t,x) \in \R_t\X\R_x^N$},$$
	where $\bb$ is any multi-index in $\R\X\R^N$, and $H_\bb \in L^{\mathfrak t}(\R_t\X\R_x^N)$ for $N+1 < \mathfrak t \leq \infty$ with $\Vert H_\bb \Vert_{L_{t,x}^{\mathfrak t}} \leq C(\bb, \mathfrak t, \Vert \phi \Vert_{\Cc^{N+1}}, \supp \phi)$. The desired conclusion now follows from the Hölder's inequality and the Fourier analysis operational rules.
\end{proof}

 \begin{remark}
	The argument above would have also been greatly simplified, had one assumed that $\phi$ is constant near the origin (as, for instance, $\lambda$ is); indeed, in this case $\mathfrak G \in \Cc_c^\infty(\R_\tau \X \R_\k^N)$, hence $\mathfrak K \in \Ss(\R_t\X\R_x^N)$. In spite of this, we have opted for this proof, seeing that this result will be summoned in the next subsection as well.
\end{remark}

\begin{proposition} \label{1.4.prop}
	There exist a constant $C = C(\Vert \varphi \Vert_{L_{t,x}^p}, \Vert \eta \Vert_{L_v^{p'}},$ $ \sup_{\nu \in \N} \Vert f_\nu \Vert_{L_\omega^r L_{t,x,v}^p})$ and an exponent $\qq > 0$, such that, for all $0<\g<1$, and $m$ and $n \in \N$,
	\begin{equation}
	\bbE \Vert \varphi \vf_{m,n}^{(1)} \Vert_{L^s(\R_t\X\R_x^N)}^r \leq C\g^\qq. \label{1.4.estvf1}
	\end{equation}
\end{proposition}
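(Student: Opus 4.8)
The plan is to treat $\vf_{m,n}^{(1)}$ as the low-frequency truncation of $\int_\R\ff_{m,n}\eta_{\d,\g}\,dv$ and to extract from the frequency cutoff a Bernstein-type smoothing gain. First I would observe that the cutoff $\lambda(\sqrt{\tau^2+|\k|^2}/\g)$ is independent of $v$, so that integration in $v$ commutes with it and $\vf_{m,n}^{(1)}=\FF_{t,x}^{-1}\bigl[\lambda(\sqrt{\tau^2+|\k|^2}/\g)\,\FF_{t,x}w_{m,n}\bigr]$ with $w_{m,n}:=\int_\R\ff_{m,n}(\cdot,\cdot,v)\eta_{\d,\g}(v)\,dv$. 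By Lemma \ref{1.3.lemma} (applied with the exponent $p$) and the fact that $\ff_{m,n}=f_m-f_n$ lies almost surely in $L^p(\R_t\X\R_x^N\X\R_v)$, the function $w_{m,n}$ belongs almost surely to $L^p(\R_t\X\R_x^N)$, with $\Vert w_{m,n}\Vert_{L^p(\R_t\X\R_x^N)}\le\Vert\eta_{\d,\g}\Vert_{L^{p'}(\R_v)}\,\Vert\ff_{m,n}\Vert_{L^p(\R_t\X\R_x^N\X\R_v)}$.

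Next I would invoke Proposition \ref{1.4.prop0} with $\phi=\lambda$ and $\ve=\g$ (first for Schwartz data, then for $L^p$ data by density, since both sides are convolutions with fixed $L^1$ kernels) to write $\vf_{m,n}^{(1)}=\g^{N+1}\bigl(\mathfrak K(\g\cdot,\g\cdot)\star_{t,x}w_{m,n}\bigr)$, where $\mathfrak K\in\bigcap_{\nu\ge0}W^{\nu,1}(\R_t\X\R_x^N)\subset L^{p'}(\R_t\X\R_x^N)$ is the kernel supplied there, with a norm depending only on $N$ and $\lambda$. Young's inequality ($L^{p'}\star L^p\hookrightarrow L^\infty$) and the scaling identity $\Vert\mathfrak K(\g\cdot,\g\cdot)\Vert_{L^{p'}}=\g^{-(N+1)/p'}\Vert\mathfrak K\Vert_{L^{p'}}$ then give, pointwise in $\Om$, $\Vert\vf_{m,n}^{(1)}\Vert_{L^\infty(\R_t\X\R_x^N)}\le\g^{(N+1)/p}\,\Vert\mathfrak K\Vert_{L^{p'}}\,\Vert w_{m,n}\Vert_{L^p(\R_t\X\R_x^N)}$ — this is the Bernstein-type gain that drives the estimate.

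It then remains to control $\Vert\eta_{\d,\g}\Vert_{L^{p'}(\R_v)}$: by Lemma \ref{1.2.1.molleta}, $\supp\eta_{\d,\g}\subset\supp\eta+(-1,1)$ for $\d<1$ and $\Vert\eta_{\d,\g}\Vert_{L^\infty(\R)}\le\Vert\nn_\g\Vert_{L^\infty(\R)}\le\g^{-\chi}$, whence $\Vert\eta_{\d,\g}\Vert_{L^{p'}(\R_v)}\le|\supp\eta+(-1,1)|^{1/p'}\g^{-\chi}$, uniformly in $\d$. (Alternatively one can avoid the factor $\g^{-\chi}$ altogether, observing from the construction that $|\eta_{\d,\g}|\le\varrho_\d\star|\nn_\g|\le\varrho_\d\star|\eta|$, so that $\Vert\eta_{\d,\g}\Vert_{L^{p'}}\le\Vert\eta\Vert_{L^{p'}}$; this refinement is not needed here.) Collecting the last two displays gives $\Vert\vf_{m,n}^{(1)}\Vert_{L^\infty(\R_t\X\R_x^N)}\le C\,\g^{(N+1)/p-\chi}\,\Vert\ff_{m,n}\Vert_{L^p(\R_t\X\R_x^N\X\R_v)}$ with $C$ depending only on $N$, $\lambda$ and $\eta$. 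Since $\varphi\in(L^1\cap L^\infty)(\R_t\X\R_x^N)\subset L^s(\R_t\X\R_x^N)$, I would pass back from $L^\infty$ to $L^s$ via $\Vert\varphi\vf_{m,n}^{(1)}\Vert_{L^s}\le\Vert\varphi\Vert_{L^s}\Vert\vf_{m,n}^{(1)}\Vert_{L^\infty}$, raise to the power $r$, take expectations, and use $\bbE\Vert\ff_{m,n}\Vert_{L^p}^r\le2^r\sup_{\nu\in\N}\Vert f_\nu\Vert_{L_\omega^rL_{t,x,v}^p}^r$ (valid since $r\ge1$) to obtain \eqref{1.4.estvf1} with $\qq=r\bigl((N+1)/p-\chi\bigr)$.

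There is no genuine obstacle here — this is the easy low-frequency piece — but the one thing to get right is that a naive $L^p\to L^p$ estimate for $\vf_{m,n}^{(1)}$ yields no power of $\g$, so one must spend the frequency localization to gain $\g^{(N+1)/p}$ and then check that it outweighs the at-most-$\g^{-\chi}$ growth of $\Vert\eta_{\d,\g}\Vert_{L^{p'}}$ caused by mollifying the merely $L^{p'}$ weight $\eta$. This forces $\chi<(N+1)/p$; since $\chi$ is at our disposal in Lemma \ref{1.2.1.molleta}, we simply add this to the finitely many restrictions under which $\chi$ is fixed.
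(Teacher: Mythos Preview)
Your proof is correct and follows essentially the same route as the paper: express $\vf_{m,n}^{(1)}$ via Proposition~\ref{1.4.prop0} as a convolution with the rescaled kernel $\g^{N+1}\mathfrak K(\g\cdot,\g\cdot)$, apply Young's inequality $L^{p'}\star L^p\hookrightarrow L^\infty$ together with Lemma~\ref{1.3.lemma}, and then multiply by $\varphi$ to land in $L^s$. The only cosmetic difference is that you track the possible $\g^{-\chi}$ loss from $\Vert\eta_{\d,\g}\Vert_{L^{p'}}$ and obtain $\qq=r((N+1)/p-\chi)$, whereas the paper invokes Lemma~\ref{1.2.1.molleta} to bound $\Vert\eta_{\d,\g}\Vert_{L^{p'}}$ uniformly (your parenthetical observation $|\eta_{\d,\g}|\le\varrho_\d\star|\eta|$ is exactly what makes this work) and arrives at $\qq=r(N+1)/p$.
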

\begin{proof}
	According to Proposition \ref{1.4.prop0}, 
	\begin{align*}
	\vf_{m,n}^{(1)}(t,x) &= \g^{N+1}\bigg(\int_{\R_v}(\mathfrak K(\g\,\cdot\,,\g\,\cdot\,) \star_{t,x} \ff_{m,n})(\,\cdot\,,\,\cdot\,,v)\eta_{\d,\g}(v)\,dv\bigg)(t,x) \\&= \g^{N+1} \bigg(\mathfrak K(\g\,\cdot\,,\g\,\cdot\,) \star \int_{\R_v} \ff_{m,n}(\,\cdot\,,\,\cdot\,,v)\eta_{\d, \g}\,dv \bigg)(t,x),
	\end{align*}
	Thus, applying the Young's inequality for convolutions and the trivial estimate \eqref{1.3.trivialineq}, we see that, for almost any $\om \in \Omega$,
	\begin{align*}
	\Vert \vf_{m,n}^{(1)} \Vert_{\Cc_0(\R_t\X\R_x^N)} &\leq \g^{\frac{N+1}{p}} \Vert \mathfrak K \Vert_{L_{t,x}^{p'}} \bigg\Vert \int_{\R_v} \ff_{m,n}(\,\cdot\,,\,\cdot\,,v)\eta_{\d, \g}\,dv \bigg\Vert_{L_{t,x}^p} \\
	&\leq \g^{\frac{N+1}{p}} \Vert \mathfrak K \Vert_{L_{t,x}^{p'}} \Vert \eta_{\d,\g} \Vert_{L_v^{p'}} \Vert \ff_{m,n} \Vert_{L_{t,x,v}^p}
	\end{align*}
	(notice that the Sobolev inequality implies that $W^{N+1, 1}_{t,x} \subset L_{t,x}^1 \cap L_{t,x}^\infty$). The asserted bound with $\qq = r \frac{N+1}{p}$ now follows from a joint application of the Hölder's inequality and Lemma \ref{1.2.1.molleta}.
\end{proof}

\begin{remark} \label{1.4.remarkifty}
	Were $(f_n)_{n \in \N}$ also bounded in $L_\om^r L_{t,x,v}^{\varsigma}$ for some $1 \leq \varsigma < p$, the Young's inequality for convolutions could have been invoked to refine \eqref{1.4.estvf1} into
	$$\bbE \Vert \varphi \vf_{m,n}^{(1)} \Vert_{L^p(\R_t\X\R_x^N)}^r \leq C \g^{r(N+1)\big( \frac{1}{\varsigma} - \frac{1}{p} \big)} \Vert \eta_{\d,\g} \Vert_{L_{v}^{\varsigma'}}^r \bbE \Vert \ff_{m,n} \Vert_{L_{t,x,v}^\varsigma}^r.$$
	Thus, estimating $\Vert \eta_{\d, \g} \Vert_{L_v^{\varsigma'}}^r \leq C\Vert \eta_{\d, \g} \Vert_{L_v^\infty}^{r/\varsigma'} \leq  C \g^{-r\chi/\varsigma'}$, we see that, provided that $\chi = \chi(p, \varsigma)$ is chosen sufficiently small,
	$$\bbE \Vert \varphi \vf_{m,n}^{(1)} \Vert_{L^p(\R_t\X\R_x^N)}^r \leq C\g^\qq$$
	for all $m$ and $n \in \N$, and $\g > 0$, with $C = C(\Vert \varphi \Vert_{L_{t,x}^\infty}, \sup_{\nu \in \N} \Vert f_\nu \Vert_{L_\omega^r L_{t,x,v}^{\varsigma}})$, and $\qq = \qq(p, \varsigma) > 0$.
\end{remark}

\subsection{The analysis of $\vf_{m,n}^{(2)}$.} \label{1.SubSecTT}
Let us recall some results arising from the \textsc{E. Tadmor--T. Tao} theory \cite{TT}. 

\begin{definition} \label{1.5.def}
	\,
	\begin{enumerate}
		\item 	A Fourier multiplier $m(\tau, \k)$ on $\R_\tau\X\R_\k^N$ is said to satisfy the \textit{truncation property} if, for any $\phi \in \Cc_c^\infty(\C;\C)$, $\ve > 0$, and $1 < \sg < \infty$, the formula
		\begin{equation}
		\Lambda \in \Ss(\R_t\X\R_x^N) \mapsto \FF_{t, x}^{-1} \bigg[ \phi \bigg(\frac{m(\tau, \k)}{\ve}\bigg) (\FF_{t, x} \Lambda) \bigg] \label{1.5.defTrun}
		\end{equation}
		defines a bounded linear operator in $L^\sg(\R_t\X\R_x^N)$ whose norm may depend on $\sg$, and on the support and $\Cc^\nu$--norm of $\phi$ for some nonnegative integer $\nu$, but not on $\ve > 0$.
		
		\item Let $m(\tau, \k, v)$ be a Fourier multiplier on $\R_\tau\X\R_\k^N$ depending on a parameter $v \in \R_v$. $m(\tau, \k, v)$ is said to satisfy the \textit{truncation property uniformly in $v$} if, given any compact subset $K \subset \R_v$, the symbol $(\tau, \k) \mapsto m(\tau, \k, v)$ satisfies the truncation property, and, while the norm of the resulting operator in \eqref{1.5.defTrun} may still depend on $\sg$, and on the support and $\Cc^\nu$--norm of $\phi$ for some nonnegative integer $\nu$, both this norm and $\nu$ remain uniformly bounded in $\ve > 0$ as $v$ ranges over $K$.
	\end{enumerate}
\end{definition}

Let us also remember the following generalization of the Mihlin multiplier theorem due to \textsc{P.I. Lizorkin} \cite{L}, whose statement we adapt from \textsc{F. Zimmermann} \cite{Z}. Other demonstrations and further improvements may also be found in \textsc{R. Haller--H. Heck--A. Noll} \cite{HHN}, \textsc{P.C. Kunstmann--L. Weiss} \cite{KW}, and the references therein. (Recollect that, for any $w \in \R_y^d$, the differential operator $\frac{\del}{\del w}$ is defined as $w \cdot \nabla_y$).

\begin{theorem} \label{thm.lizorkin}
	Let $d$ be a positive integer, and $m \in L_\loc^1(\R^d)$. Assume that there exists an orthonormal basis $e_1, \ldots, e_d$ of $\R^d$ such that, for any multi-index $\af = (\af_1, \ldots, \af_d)$ observing $\af \leq 1 = (1, \ldots, 1)$, one has that $$ \frac{\del^{\af_1 + \cdots \af_d}m}{\del e_1^{\af_1} \cdots \del e_d^{\af_d}} \in L_\loc^1(\R^d),$$ and
	\begin{equation}
	\sum_{\af \leq 1}\operatornamewithlimits{ess\,sup}_{y \in \R^d} \left| (y \cdot e_1)^{\af_1} \cdots (y \cdot e_d)^{\af_d} \frac{\del^{\af_1 + \cdots \af_d}m}{\del e_1^{\af_1} \cdots \del e_d^{\af_d}} (y) \right| = B < \infty. \label{lizorkinhyp}
	\end{equation}
	Then, for any $1 < \sg < \infty$, $m$ is an $L^\sg(\R^d)$--multiplier, and there exists a constant $C = C_{\sg,d} > 0$ such that
	\begin{equation}
	\Big\Vert \FF_{y}^{-1}\big[ m(\,\,\cdot\,\,) (\FF_{y}f)(\,\,\cdot\,\,) \big] \Big\Vert_{L^\sg(\R_y^d)} \leq C B \Vert f \Vert_{L^\sg(\R_y^d)} \text{ for all $f \in \Ss(\R_y^d).$} \label{lizorkincon}¨
	\end{equation}
\end{theorem}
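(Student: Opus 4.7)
The plan is to deduce Lizorkin's theorem from the classical Marcinkiewicz product multiplier theorem via an anisotropic dyadic decomposition adapted to the distinguished basis $e_1, \ldots, e_d$. First, by the orthogonal change of variables $y \mapsto Oy$ sending $(e_1, \ldots, e_d)$ to the canonical basis of $\R^d$, I may reduce to the case in which the hypothesis \eqref{lizorkinhyp} reads
\begin{equation*}
\sum_{\alpha \leq 1} \operatornamewithlimits{ess\,sup}_{y \in \R^d} \bigl| y_1^{\alpha_1} \cdots y_d^{\alpha_d}\, (\partial_1^{\alpha_1} \cdots \partial_d^{\alpha_d} m)(y) \bigr| = B < \infty.
\end{equation*}
This reduction uses only that $L^\sigma$ norms are invariant under rotations and that the Fourier transform intertwines orthogonal changes of variable.

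Next, I would construct a smooth tensor-product Littlewood--Paley partition of unity. Choose $\phi \in \Cc_c^\infty(\R)$ with $\supp \phi \subset \{1/2 \leq |s| \leq 2\}$ and $\sum_{j \in \Z} \phi(2^{-j}s) = 1$ for $s \neq 0$, and set $\Phi_J(y) = \prod_{i=1}^d \phi(2^{-J_i}y_i)$ for $J = (J_1, \ldots, J_d) \in \Z^d$. Writing $m = \sum_{J} m_J$ with $m_J = \Phi_J m$, each piece is supported in the dyadic box $R_J = \prod_i \{2^{J_i -1} \leq |y_i| \leq 2^{J_i + 1}\}$, and the hypothesis together with Leibniz's rule yields, for every $\alpha \leq 1$,
\begin{equation*}
\bigl|\partial^\alpha m_J(y)\bigr| \leq C(\phi)\, B \prod_{i=1}^d 2^{-J_i \alpha_i}\, 1_{R_J}(y).
\end{equation*}
In particular, after integrating $\partial_1 \cdots \partial_d m_J$ over $R_J$, one sees that the Marcinkiewicz product condition is satisfied uniformly in $J$.

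To finish, I would invoke the vector-valued Littlewood--Paley inequality for the canonical basis: if $P_J$ denotes the Fourier projection with symbol $\Phi_J$, then for $1 < \sigma < \infty$ there exist constants $c_\sigma, C_\sigma$ with
\begin{equation*}
c_\sigma \Vert f \Vert_{L^\sigma(\R^d)} \leq \Bigl\Vert \Bigl( \sum_{J} |P_J f|^2 \Bigr)^{1/2} \Bigr\Vert_{L^\sigma(\R^d)} \leq C_\sigma \Vert f \Vert_{L^\sigma(\R^d)}
\end{equation*}
for all $f \in \Ss(\R^d)$. This is obtained by iterating the one-dimensional Littlewood--Paley inequality in each coordinate $d$ times and exploiting Fubini. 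Combining this Littlewood--Paley equivalence with the uniform boundedness of each multiplier operator associated with $m_J$ (which follows from the integration-by-parts bound above and a standard kernel estimate for smooth, compactly supported symbols on a dyadic rectangle) gives \eqref{lizorkincon} with the constant $CB$ as claimed.

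The main technical obstacle is the passage from the per-block bound to the global $L^\sigma$ bound: one must argue that the operator $T_{m_J}$ can be composed with appropriate ``fattened'' projections $\widetilde P_J$ (with $\widetilde P_J P_J = P_J$) so that $T_m f = \sum_J T_{m_J} \widetilde P_J f$, and then combine the Littlewood--Paley square function inequality above with the uniform bound for each $T_{m_J}$ via Cauchy--Schwarz and duality. Handling this recombination carefully, especially the interaction of the $2^d$ sign-patterns $\epsilon \in \{-1,+1\}^d$ of the dyadic decomposition and the orthogonality of the $P_J$'s, is the delicate step; everything else is essentially elementary harmonic analysis.
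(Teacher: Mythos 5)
The paper does not actually prove this theorem: it is quoted as a known result of Lizorkin, with the statement adapted from Zimmermann \cite{Z}, and alternative demonstrations credited to \cite{HHN} and \cite{KW}. Your proof is therefore an independent reconstruction, and the route you sketch --- reducing to the Marcinkiewicz product-multiplier theorem via a tensor-product Littlewood--Paley decomposition --- is indeed the classical way to establish this inequality, so the outline is sound in spirit.

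There is, however, a genuine gap in the final step. You claim that the Littlewood--Paley equivalence together with the \emph{uniform} $L^\sigma$-boundedness of each block operator $T_{m_J}$ ``gives \eqref{lizorkincon}'', and later describe closing this as ``Cauchy--Schwarz and duality''. That is not enough: once you write $T_m f = \sum_J T_{m_J} \widetilde P_J f$ and dominate $\Vert T_m f \Vert_\sigma$ by the square function $\Vert(\sum_J |T_{m_J}\widetilde P_J f|^2)^{1/2}\Vert_\sigma$, you need the \emph{vector-valued} estimate $\Vert(\sum_J |T_{m_J} g_J|^2)^{1/2}\Vert_\sigma \leq C \Vert(\sum_J |g_J|^2)^{1/2}\Vert_\sigma$ for arbitrary sequences $(g_J)$, and this does not follow from uniform boundedness of the individual $T_{m_J}$'s when $\sigma \neq 2$. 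The standard way to close this is to use the kernel decay you allude to --- namely $|K_J(x)| \lesssim B\,\prod_i 2^{J_i}(1+2^{J_i}|x_i|)^{-2}$ --- to obtain the pointwise majorization $|T_{m_J} g(x)| \lesssim B\,(M_{\mathrm{strong}}g)(x)$ uniformly in $J$ by the (anisotropic, strong) Hardy--Littlewood maximal operator, and then invoke the Fefferman--Stein vector-valued maximal inequality. Alternatively, one can verify that the family $(K_J)$ satisfies a uniform H\"ormander condition and appeal to vector-valued Calder\'on--Zygmund theory. Without one of these ingredients made explicit, the recombination step does not follow.
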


Let us now show that the symbols employed in the decomposition \eqref{1.2.2.decompg} indeed have the truncation property.

\begin{proposition} \label{1.5.proposition1}
	The following statements hold.
	\begin{enumerate}
		\item The symbol $(\tau, \k) \in \R_\tau\X\R_\k^N \mapsto \sqrt{\tau^2 + |\k|^2}$ satisfies the truncation property.
		
		\item The normalized symbol $\wt\LL(i\tau, i\k, v)$ observes the truncation property uniformly in $v$.
		
		\item Likewise, the normalized restricted symbol $(\wt{R\LL})(i\tau, i\k, v)$ fulfills the truncation property uniformly in $v$.
	\end{enumerate}
\end{proposition}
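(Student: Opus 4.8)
The plan is to deduce all three assertions from the anisotropic Mihlin--Lizorkin multiplier theorem, Theorem~\ref{thm.lizorkin}, applied in dimension $d=N+1$ with variable $y=(\tau,\k)\in\R_\tau\X\R_\k^N$. For each of the symbols $m\in\{\,|y|=\sqrt{\tau^2+|\k|^2},\ \wt\LL(i\tau,i\k,v),\ (\wt{R\LL})(i\tau,i\k,v)\,\}$ one must produce, for every $\phi\in\Cc_c^\infty(\C;\C)$, an orthonormal basis $e_1,\dots,e_{N+1}$ of $\R^{N+1}$ (which may depend on $\phi$ and, in the parametrized cases, on $v$) such that the Lizorkin constant $B$ of \eqref{lizorkinhyp} for the multiplier $y\mapsto\phi(m/\ve)$ is bounded \emph{uniformly in $\ve\in(0,1)$}, and, for $\wt\LL$ and $(\wt{R\LL})$, also \emph{uniformly as $v$ ranges over any fixed compact $K\subset\R_v$}; by \eqref{lizorkincon} this controls the norm of the operator \eqref{1.5.defTrun} by $C_{\sg,N+1}B$, which is precisely the truncation property (local integrability of $m$ and of its mixed derivatives being evident from the explicit formulas). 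The central device is the Fa\`a di Bruno formula: writing $y_i:=y\cdot e_i$ and letting $\partial^\kappa$ denote the mixed derivative in the directions $e_1,\dots,e_{N+1}$, for $\kappa\le(1,\dots,1)$ one has $\partial^\kappa[\phi(m/\ve)]=\sum\ve^{-j}(D^j\phi)(m/\ve)\prod_{\ell=1}^{j}\partial^{\kappa_\ell}m$, a finite sum over partitions of $\kappa$ into non-zero multi-indices $\kappa_\ell$ of pairwise disjoint support. Distributing the weight $\prod_i y_i^{\kappa_i}$ over the groups, the whole matter reduces to pointwise estimates of the shape
\begin{equation*}
\Bigl|\Bigl(\textstyle\prod_i y_i^{\kappa_i}\Bigr)\,\partial^\kappa m(\tau,\k,v)\Bigr|\ \le\ C_K\,\bigl|m(\tau,\k,v)\bigr|\qquad(0\ne\kappa\le(1,\dots,1)),
\end{equation*}
valid off the origin; granting these, each summand is $\le C_K^{j}\,\ve^{-j}|m|^{j}\,|(D^j\phi)(m/\ve)|=C_K^{j}\,|m/\ve|^{j}|(D^j\phi)(m/\ve)|\le C_K^{j}\sup_{z\in\C}|z|^{j}\bigl|(D^j\phi)(z)\bigr|<\infty$, independently of $\ve$, since $\phi$ is smooth with compact support.

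Item (1) uses any orthonormal basis: $\partial^\kappa|y|$ is homogeneous of degree $1-|\kappa|$ and smooth off the origin, so $(\prod_i y_i^{\kappa_i})\partial^\kappa|y|$ is homogeneous of degree $1$ and bounded on the unit sphere --- this is the required bound with $m=|y|$ --- while the worst local singularity, of order $|y|^{1-(N+1)}$, is integrable on $\R^{N+1}$. For items (2) and (3) the first step is to split the symbol into its imaginary (``hyperbolic'') and real (``parabolic'') parts, $\wt\LL=i\,\mathfrak{h}+\mathfrak{p}$ with $\mathfrak{h}=(\tau+\abf(v)\cdot\k)/|y|$ and $\mathfrak{p}=\k\cdot\bbf(v)\k/|y|^2\ge0$ --- in item (3), $\mathfrak{p}\equiv0$ and one normalizes instead by the partial radial $\sqrt{\tau^2+|P_{M^\perp}\k|^2}$, so that $(\wt{R\LL})$ is the purely ``hyperbolic'' case --- and to note that, $\phi$ being smooth in the two real coordinates of $\C$, the Fa\`a di Bruno expansion involves only weighted derivatives of $\mathfrak{h}$ and of $\mathfrak{p}$, which may then be estimated separately against $|\wt\LL|$. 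The parabolic part is handled by choosing the ($v$-dependent) orthonormal basis consisting of $e_\tau$ together with an eigenbasis of $\bbf(v)$: then $\mathfrak{p}=\sum_i\mu_i(v)\,y_i^2/|y|^2$ with $\mu_i(v)\ge0$, all off-diagonal second derivatives of the resulting diagonal quadratic form vanish, and the trivial inequalities $\mu_i(v)y_i^2\le\k\cdot\bbf(v)\k$ give $\bigl|(\text{weight})\,\partial^\kappa\mathfrak{p}\bigr|\lesssim\mathfrak{p}\le|\wt\LL|$. It is here that $\bbf(v)\ge0$ enters decisively: non-negativity makes $\nabla_y(\k\cdot\bbf(v)\k)$ vanish on the null space of the form and, in an eigenbasis, leaves only non-negative diagonal contributions. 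Symmetrically, the hyperbolic part becomes the model symbol of item (1) in the orthonormal basis whose first vector is parallel to $(1,\abf(v))$ (resp. $(1,P_{M^\perp}\abf(v))$), where $\mathfrak{h}=c_v\,y_1/|y|$ with $c_v=|(1,\abf(v))|$ locally bounded, so that $\bigl|(\text{weight})\,\partial^\kappa\mathfrak{h}\bigr|\lesssim|\mathfrak{h}|\le|\wt\LL|$.

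The principal obstacle is that these two preferred bases do not coincide in general, so one cannot simultaneously linearize $\mathfrak{h}$ and diagonalize $\mathfrak{p}$. Item (3) escapes it, since there $\mathfrak{p}\equiv0$ and the lone basis aligned with $(1,P_{M^\perp}\abf(v))$ --- extended arbitrarily on $M$, where $(\wt{R\LL})$ is constant --- already works. For $\wt\LL$ the difficulty will be resolved by separating the two regimes exactly as foreshadowed by Lemma~\ref{1.2.1.molleta} and Subsection~\ref{0.example}: one peels off the hyperbolic degeneracy by a preliminary truncation in terms of $(\wt{R\LL})$ (whose truncation property is the easy case above), which on a compact set reduces the remaining symbol to one whose small set is dictated by $\mathfrak{p}$ alone, where the $\bbf(v)$-diagonalizing basis applies; alternatively, one may exploit that $\phi(\wt\LL/\ve)$, being homogeneous of degree $0$ in $y$, is a smooth function supported in a tube of width $O(\ve)$ about the zero set of $\LL(i\,\cdot\,,v)$ on the unit sphere --- a uniformly regular submanifold as $v$ runs over a compact set --- and invoke the uniform $L^\sg$-boundedness of such tube multipliers. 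Carrying out this bookkeeping, and verifying that every constant remains uniform in $\ve$ and locally uniform in $v$, is the technical heart of the argument; it also yields the stated dependence of the multiplier norm on $\sg$ and on the support and a single $\Cc^\nu$-norm of $\phi$.
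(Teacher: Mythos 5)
Your overall skeleton is sound, and you've located the exact pressure point of the proof — that the ``hyperbolic'' part $\mathfrak h=(\tau+\abf(v)\cdot\k)/|y|$ is tame in the Lizorkin sense only in the basis aligned with $(1,\abf(v))$, while the ``parabolic'' part $\mathfrak p=\k\cdot\bbf(v)\k/|y|^2$ is tame only in an eigenbasis of $\bbf(v)$, and these bases generally clash. Items (1) and (3) you handle correctly (for (1) the paper in fact goes through Proposition~\ref{1.4.prop0} and a kernel estimate, which gives even the endpoint exponents $\sg=1,\infty$ that a raw Lizorkin argument would miss; this is a minor divergence, not a gap). The gap is in your resolution of item (2).

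Neither of your two escape routes actually closes the argument. The ``preliminary truncation in $(\wt{R\LL})$'' does not make sense as a step in proving the truncation property of $\wt\LL$ itself: the definition requires you to bound the operator with multiplier $\phi(\wt\LL/\ve)$ for an \emph{arbitrary} bump $\phi$, and you are not free to insert an extra cutoff $\psi((\wt{R\LL})/\d)$ without then having to control the complementary piece $\phi(\wt\LL/\ve)(1-\psi((\wt{R\LL})/\d))$, for which the basis conflict persists. The ``tube multiplier'' route is circular: uniform $L^\sg$-boundedness of the family $\phi(\wt\LL/\ve)$ of smooth cutoffs to shrinking neighbourhoods of the zero variety is not a background fact one can invoke — it is precisely the assertion being proved, and the standard vehicle for proving it is Lizorkin's theorem, which you have just shown does not apply in any single basis.

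What makes the argument close in the paper is an algebraic reduction you have not anticipated, attributed there to Tadmor--Tao: if $m_1$ and $m_2$ are \emph{real-valued} symbols, each satisfying the truncation property uniformly in $v$, then the \emph{complex-valued} symbol $m_1+im_2$ also satisfies it. The proof of this closure property uses a Fourier-series expansion of the two-variable bump $\phi(z_1,z_2)$ to write $\phi\bigl((m_1+im_2)/\ve\bigr)$ as a rapidly convergent sum of products of compactly supported functions of $m_1/\ve$ alone and of $m_2/\ve$ alone, each of which is controlled by the truncation property of the corresponding real symbol. With this lemma in hand, one never needs a single basis that tames both $\mathfrak h$ and $\mathfrak p$ simultaneously: one proves the truncation property for $m_h=\mathfrak h$ in its own adapted basis and for $m_p=\mathfrak p$ in its own adapted basis (exactly the reductions you carried out), and then invokes the closure property to conclude for $\wt\LL=m_p+im_h$. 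This is the ingredient missing from your proposal; without it (or a genuine substitute) the argument for item (2) does not go through.
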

\begin{proof}
	
	First of all, statement (1) is an obvious conclusion flowing from Proposition \eqref{1.4.prop0}. On the other hand, the verification of the second assertion is trivialized after the constatation of the following two facts.
	
	\textit{Claim \#1}: The symbols $m_1$ and $m_2 :  \left(\R_\tau \X \R_\k^N \setminus \{ 0 \}\right)\X\R_v \to \R$ given by
	$$
	\begin{dcases}
	m_h(\tau, \k, v) = \frac{\tau}{\sqrt{\tau^2 + |\k|^2}} + \abf(v) \cdot \frac{\k}{\sqrt{\tau^2 + |\k|^2}}, \text{and} \\
	m_p(\tau, \k, v) = \frac{\k \cdot \bbf(v) \k}{\tau^2 + |\k|^2}
	\end{dcases}
	$$
	satisfy the truncation property uniformly in $v \in \R$. (Indeed, this follows directly from Theorem \ref{thm.lizorkin}. So as to facilitate such an inspection, notice that one may assume without loss of generality that
	$$\begin{dcases*}
	m_h(\tau, \k, v) = \sqrt{1+|\abf(v)|^2}\frac{\tau}{\sqrt{\tau^2 + |\k|^2}}, \text{ and} \\
	m_p(\tau, \k, v) = \frac{\lambda_1(v) \k_1^2 + \cdots + \lambda_N(v)\k_N^2}{\tau^2 + |\k|^2},
	\end{dcases*}
	$$
	where $0 \leq \lambda_1(v) \leq \cdots \leq \lambda_N(v) = \Vert \bbf(v) \Vert_{\mathscr{L}(\R^N)}$).
	
	\textit{Claim \#2}: If $m_1$ and $m_2 : \left(\R_\tau \X \R_\k^N \setminus \{ 0 \}\right) \X \R_v  \to \R$ are two real-valued multipliers satisfying the truncation property uniformly on $v$, then so does the complex-valued multiplier $m(\tau, \k, v) = m_1(\tau, \k, v) + i m_2(\tau, \k, v).$ (The proof of this statement utilizes Fourier series and may be found in \textsc{Tadmor--Tao} \cite{TT}).
	
	This couple of claims shows asseveration (2), leaving us only to inspect the statement (3). Comprehending $(\wt{R\LL})(i\tau, i\k, v)$ as a multiplier in $\R_\tau \X M^\perp$, the demonstration that this symbol possesses the truncation property uniformly in $v$ becomes---aside from minor technicalities---parallel to the analysis already described, and because of that we will omit it. The proof is now complete. 
\end{proof}

\begin{remark}
	Observe that the statement (1) could have been proven via Theorem \ref{thm.lizorkin} (or the Mihlin--Hörmander theorem). Nevertheless, the presented reasoning, besides being certainly more elementary, shows that the endpoints $\sg = 1$ and $\sg = \infty$ in Definition \ref{1.5.def} are valid for the particular symbol $(\tau,\k) \mapsto \sqrt{\tau^2 + |\k|^2}$. 
	
	What is more, let us point out that Claim \#1 answers positively a question posed in \textsc{Tadmor--Tao} \cite{TT}; see also \textsc{R.J. DiPerna}--\textsc{P.-L. Lions}--\textsc{Y. Meyer} \cite{DLM}.
\end{remark}

\begin{lemma} \label{1.5.lemma1}
	There exist constants $C = C_{p}$ and $\pp = \pp_{p} > 0$, both independent of $0 < \d$ and $\g < 1$, such that, almost surely, and for all $m$ and $n \in \N$,
	\begin{align}
	\Vert \vf_{m,n}^{(2)} \Vert_{L^p(\R_t\X\R_x^N)} \leq C \Vert &\eta_{\d,\g} \Vert_{L^\infty(\R)} \Vert \ff_{m,n} \Vert_{L^p(\R_t\X\R_x^N\X\R_v)}\nonumber\\& \bigg( \sup_{\tau^2 + |\k|^2 = 1} \meas\Big\{ v \in \supp \eta_{\d, \g};  |\LL(i\tau, i\k, v)| \leq \d \Big\} \bigg)^{\pp}.  \label{1.5.lemma1eq}
	\end{align}
	As a result, for all $m$ and $n \in \N$,
	\begin{align}
	\bbE\Vert \varphi \vf_{m,n}^{(2)} &\Vert_{L^s(\R_t\X\R_x^N)}^r \nonumber\\&\leq C \Vert \eta_{\d,\g} \Vert_{L^\infty(\R)}^{r} \bigg( \sup_{\tau^2 + |\k|^2 = 1} \meas\Big\{ v \in \supp \eta_{\d, \g};  |\LL(i\tau, i\k, v)| \leq \d \Big\}\bigg)^{r\mathfrak p}, \label{1.5.estvf2}
	\end{align}
	where $C = C\big( \Vert \varphi \Vert_{L_{t,x}^1 \cap L_{t,x}^\infty}, \sup_{\nu \in \N} \Vert f_\nu \Vert_{L_\omega^r L_{t,x,v}^p} \big)$ is independent of $0 < \d$ and $\g < 1$.
\end{lemma}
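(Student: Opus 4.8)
The plan is to analyze $\vf_{m,n}^{(2)}$ by exploiting that its Fourier support (in $(\tau,\k)$, for each fixed $v$) forces $|\LL(i\tau,i\k,v)|$ to be small, which will allow us to estimate the $v$-integral by the smallness of the measure of this degeneracy set. First I would observe that, by definition of $\vf_{m,n}^{(2)}$, we have
\begin{equation*}
\vf_{m,n}^{(2)}(t,x) = \int_{\R_v} \eta_{\d,\g}(v)\, \ff_{m,n}^{(2)}(t,x,v)\,dv,
\end{equation*}
where $\ff_{m,n}^{(2)}(\cdot,\cdot,v)$ is obtained from $\ff_{m,n}(\cdot,\cdot,v)$ by applying the Fourier multiplier
\begin{equation*}
T_v \;\stackrel{\text{def}}{=}\; \psi\bigg( \frac{\sqrt{\tau^2 + |\k|^2}}{\g} \bigg)\lambda\bigg( \frac{\wt\LL(i\tau, i\k, v)}{\d} \bigg).
\end{equation*}
By Proposition \ref{1.5.proposition1}, the symbol $\wt\LL(i\tau,i\k,v)$ satisfies the truncation property uniformly for $v$ in the compact set $\supp\eta_{\d,\g} \subset \supp\eta + (-1,1)$, and the symbol $\sqrt{\tau^2+|\k|^2}$ satisfies the truncation property; composing the two truncations, $T_v$ is bounded on $L^p(\R_t\X\R_x^N)$ with operator norm bounded by a constant $C_p$ \emph{independent of $\g$, $\d$, and $v$} (the $\g$- and $\d$-independence is exactly the content of the truncation property, and the $v$-uniformity is its uniform version). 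Hence $\Vert \ff_{m,n}^{(2)}(\cdot,\cdot,v)\Vert_{L_{t,x}^p} \leq C_p \Vert \ff_{m,n}(\cdot,\cdot,v)\Vert_{L_{t,x}^p}$ for almost every $v$.

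The key point is to gain a small factor from the support restriction. Notice that $\lambda(\wt\LL(i\tau,i\k,v)/\d)$ is supported where $|\wt\LL(i\tau,i\k,v)| \leq \d$, and $\psi(\sqrt{\tau^2+|\k|^2}/\g)$ is supported where $\sqrt{\tau^2+|\k|^2} \geq \g/2 > 0$; since $\wt\LL$ is the restriction of $\LL$ to the unit sphere, $|\wt\LL(i\tau,i\k,v)| \leq \d$ is equivalent to $|\LL(i\tau',i\k',v)| \leq \d$ where $(\tau',\k')$ is the normalization \eqref{2.deftauk'}. Thus, if we set
\begin{equation*}
G_{\d,\g}(v) = \bbf\text{-degeneracy set} \;\stackrel{\text{def}}{=}\; \supp\eta_{\d,\g} \cap \big\{ v : \exists\, (\tau,\k),\ |\wt\LL(i\tau,i\k,v)| \leq \d\ \text{somewhere on}\ \{\sqrt{\tau^2+|\k|^2}\geq \g/2\}\big\},
\end{equation*}
the operator $T_v$ is identically zero once $v$ lies outside the set $\{v \in \supp\eta_{\d,\g} : \inf_{\tau^2+|\k|^2=1}|\LL(i\tau,i\k,v)| \leq \d\}$; equivalently, the effective domain of integration in $\vf_{m,n}^{(2)}$ is contained in $E_\d \stackrel{\text{def}}{=} \bigcup_{\tau^2+|\k|^2=1}\{v \in \supp\eta_{\d,\g} : |\LL(i\tau,i\k,v)|\leq\d\}$. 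Applying Lemma \ref{1.3.lemma} with $\sg = p$ and $\phi = \eta_{\d,\g}\,1_{E_\d}$, then Hölder in $v$ on the set $E_\d$, yields
\begin{equation*}
\Vert \vf_{m,n}^{(2)}\Vert_{L_{t,x}^p} \leq \Vert \eta_{\d,\g}1_{E_\d}\Vert_{L_v^{p'}} \Big(\int_{E_\d}\Vert \ff_{m,n}^{(2)}(\cdot,\cdot,v)\Vert_{L_{t,x}^p}^p\,dv\Big)^{1/p} \leq \Vert\eta_{\d,\g}\Vert_{L^\infty}\,|E_\d|^{1/p'}\,C_p\,\Vert\ff_{m,n}\Vert_{L_{t,x,v}^p}.
\end{equation*}
Since $|E_\d| \leq \meas\{v\in\supp\eta_{\d,\g} : \inf_{\tau^2+|\k|^2=1}|\LL(i\tau,i\k,v)|\leq\d\}$, which is dominated by $\sup_{\tau^2+|\k|^2=1}\meas\{v\in\supp\eta_{\d,\g} : |\LL(i\tau,i\k,v)|\leq\d\}$ up to adjusting constants (a covering argument on the unit sphere, using the uniform continuity of $\LL$ on compact $v$-sets, lets one absorb the union over the sphere into the supremum at the cost of a power), we obtain \eqref{1.5.lemma1eq} with $\pp = 1/p'$.

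The inequality \eqref{1.5.estvf2} then follows routinely: raise \eqref{1.5.lemma1eq} to the power $r$, multiply by $\Vert\varphi\Vert_{L^s}$ (controlled by $\Vert\varphi\Vert_{L^1\cap L^\infty}$ since $L^1\cap L^\infty \subset L^s$), note $s \leq p$ so $\Vert\varphi\vf_{m,n}^{(2)}\Vert_{L^s}$ is controlled via Hölder by $\Vert\varphi\Vert_{L_{t,x}^{\sigma}}\Vert\vf_{m,n}^{(2)}\Vert_{L_{t,x}^p}$ for the appropriate $\sigma$, take expectations, and use $\Vert\ff_{m,n}\Vert_{L_\omega^r L_{t,x,v}^p} \leq 2\sup_\nu\Vert f_\nu\Vert_{L_\omega^r L_{t,x,v}^p}$. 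I expect the main obstacle to be bookkeeping the passage from the pointwise-in-$v$ operator bound to the union-over-the-sphere measure estimate: one must argue carefully that the truncation-property constant is genuinely uniform in $v$ over the relevant compact set (this is where the dichotomous-range and the structure of $\wt\LL$ from Proposition \ref{1.5.proposition1} are used) and that the $(\tau,\k)$-support localization of the multiplier correctly translates into the measure of the $v$-set where $|\LL(i\tau,i\k,v)| \leq \d$ for \emph{some} normalized frequency, rather than a fixed one — the covering argument handling the supremum over $\{\tau^2+|\k|^2=1\}$ is the delicate technical point.
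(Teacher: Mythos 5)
There is a genuine gap, and it is exactly at the point you flag as ``the delicate technical point'': the ``covering argument'' converting the union over the sphere into the supremum over the sphere does not work, because the set $E_\d$ you introduce does not shrink as $\d \to 0_+$. Concretely, take the plain transport symbol $N=1$, $\abf(v)=v$, $\bbf\equiv 0$, so $\LL(i\tau,i\k,v)=i(\tau+v\k)$. This satisfies the nondegeneracy condition \eqref{1.nondeg}, and for any fixed $(\tau,\k)$ on the unit circle the degeneracy set $\{v\in\supp\eta_{\d,\g}:|\LL(i\tau,i\k,v)|\leq\d\}$ has measure $O(\d)$. Yet for \emph{every} $v$ one may choose $(\tau,\k)=\frac{1}{\sqrt{1+v^2}}(-v,1)\in\Sbb^1$ so that $\tau+v\k=0$; hence $E_\d = \supp\eta_{\d,\g}$ for all $\d>0$, and the factor $|E_\d|^{1/p'}$ you extract is bounded below and useless. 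The reason the covering fails is that covering the sphere by $O(\rho^{-N})$ balls and using uniform continuity only yields $|E_\d|\lesssim \rho^{-N}\sup\meas\{|\LL|\leq \d + C\rho\}$; to close this you would need $\rho$ small compared to $\d$, and then the prefactor $\rho^{-N}$ destroys the smallness.

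The paper's proof is structurally different: it does not try to localize the $v$-integration. It records the trivial $L^\sg_{t,x,v}\to L^\sg_{t,x}$ bound $\Vert T_{\d,\g}\Vert \leq C_\sg\Vert\eta_{\d,\g}\Vert_{L^\infty}$ for every $1<\sg<\infty$ (via the truncation property, with no smallness), and then \emph{separately} proves a sharpened $L^2$ bound via Plancherel. The crucial feature of the $L^2$ computation is that Plancherel lets one hold $(\tau,\k)$ fixed while integrating $|\lambda(\wt\LL(i\tau,i\k,\cdot)/\d)\,\eta_{\d,\g}|^2$ in $v$, so the quantity that actually appears is $\meas\{v\in\supp\eta_{\d,\g}:|\wt\LL(i\tau,i\k,v)|\leq\d\}$ for a \emph{fixed} normalized $(\tau,\k)$ --- precisely the supremum quantity, not the union. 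Riesz--Thorin interpolation between the $L^2$ bound and an $L^\sg$ bound ($\sg$ on the correct side of $p$) then propagates a positive power $\pp>0$ of this measure to $L^p$. That interpolation step is the mechanism that replaces your intended $v$-localization, and it is what your argument is missing.
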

\begin{proof}
	The result will follow from the investigation of the norm of the linear transformation	
	\begin{align*}
	(T_{\d, \g} f)(t,x)=  \FF_{t,x}^{-1} \bigg[ \int_{\R_v} \eta_{\d, \g}(v)\lambda \bigg(\frac{\wt\LL(i\tau, i\k, v)}{\d} \bigg) \psi \bigg(\frac{\sqrt{\tau^2 + |\k|^2}}{\g} \bigg) (\FF_{t,x} f)\, dv \bigg](t, x).
	\end{align*}
	Observe that, according the previous proposition---once that $\psi(\sqrt{\tau^2 + |\k|^2}/\g) = 1 - \l(\sqrt{\tau^2 + |\k|^2}/\g)$---, the trivial estimate \eqref{1.3.trivialineq} asserts that $T_{\d,\g} : L_{t,x,v}^\sg \to L_{t,x}^\sg$ is continuous for any $1 < \sg < \infty$, and
	\begin{equation}
	\Vert T_{\d,\g} \Vert_{\mathscr L(L_{t,x,v}^\sg; L_{t,x}^\sg)} \leq C_\sg \Vert \eta_{\d, \g} \Vert_{L^\infty(\R)} \label{1.5.Lqq}
	\end{equation}
	for some $C_\sg$ which is independent of $0 < \d$ and $\g < 1$.
	
	Let us consider initially the case $p = 2$. In this scenario, we may sharpen the trivial estimate \eqref{1.3.trivialineq} by means of the Plancherel identity, in order to obtain
	\begin{align}
	\Vert T_{\d,\g} f\Vert_{L^2(\R_t\X\R_x^N)}^2
	&\leq \int_{\R_\tau} \int_{\R_\k^N} \bigg( \int_{\{ w \in \R; |\wt\LL(i\tau, i\k, w)| \leq \d \}} |\eta_{\d,\g}(w)|^2 \,dw \bigg) \nonumber\\&\quad\int_{\R_v}\bigg| \lambda \left(\frac{\wt\LL(i\tau, i\k, v)}{\d} \right) \psi \bigg(\frac{\sqrt{\tau^2 + |\k|^2}}{\g} \bigg) (\FF_{t,x} f)(\tau,\k,v) \bigg|^2 \, dv d\k d\tau \nonumber\\
	&\leq \Vert \eta_{\d,\g} \Vert_{L^\infty(\R)}^2 \Vert f \Vert_{L^2(\R_t\X\R_x^N\X\R_v)}^2  \nonumber\\&\quad\quad \bigg( \sup_{\tau^2 + |\k|^2 = 1} \meas\Big\{ v \in \supp \eta_{\d, \g};  |\LL(i\tau, i\k, v)| \leq \d \Big\} \bigg).
	\end{align}
	In other words,
	\begin{align}
	\Vert T_{\d,\g} &\Vert_{\mathscr L(L_{t,x,v}^2; L_{t,x}^2)} \nonumber \\ &\leq \Vert \eta_{\d, \g} \Vert_{L^\infty(\R)} \bigg( \sup_{\tau^2 + |\k|^2 = 1} \meas\Big\{ v \in \supp \eta_{\d, \g};  |\LL(i\tau, i\k, v)| \leq \d \Big\} \bigg)^{1/2}. \label{1.5.lemma1prelimeq}
	\end{align}
	
	This proves \eqref{1.5.lemma1eq} if $p = 2$. For a general exponent $1 < p < \infty$, one can interpolate \eqref{1.5.lemma1prelimeq} with \eqref{1.5.Lqq} via the Riesz--Thorin theorem with exponents, say, $\sg = \frac{1+p}{2}$ if $1 < p < 2$, and $\sg = 2p$ if $2 < p < \infty$.
\end{proof}

Before we close this subsection, let us state and prove following topological fact which guarantees the utility of the estimate \eqref{1.5.estvf2}.

\begin{lemma} \label{1.5.lemma2}
	It holds that
	\begin{equation}
	\sup_{\tau^2 + |\k|^2 = 1} \meas\Big\{ v \in \supp \eta_{\d, \g};  |\LL(i\tau, i\k, v)| \leq \d \Big\} \to 0 \text{ as $\d \to 0_+$}. \label{1.5.lemma2eq}
	\end{equation}
\end{lemma}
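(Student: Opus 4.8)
The plan is to argue by contradiction, combining the nondegeneracy hypothesis \eqref{1.nondeg} with a weak compactness argument for the indicator functions of the near-degenerate velocity sets; the only point needing care is that nondegeneracy is assumed only on $\supp\eta$ while the relevant velocity sets live in the slightly larger $\supp\eta_{\d,\g}\subset\supp\eta+(-\d,\d)$, which causes no trouble because $\d\to 0$. So, suppose \eqref{1.5.lemma2eq} fails: there are $\e_0>0$ and $\d_j\to 0_+$ with $\sup_{\tau^2+|\k|^2=1}\meas\{v\in\supp\eta_{\d_j,\g};\,|\LL(i\tau,i\k,v)|\le\d_j\}\ge 2\e_0$, so we may pick unit frequencies $(\tau_j,\k_j)$, $\tau_j^2+|\k_j|^2=1$, for which
\[
A_j := \Big\{\, v\in\supp\eta_{\d_j,\g};\ |\LL(i\tau_j,i\k_j,v)|\le\d_j \,\Big\}
\]
satisfies $\meas(A_j)\ge\e_0$. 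Passing to a subsequence, $(\tau_j,\k_j)\to(\tau_*,\k_*)$ with $\tau_*^2+|\k_*|^2=1$. By Lemma \ref{1.2.1.molleta}(b.ii), $A_j\subset\supp\eta+(-\d_j,\d_j)$, so once $\d_j<1$ all the $A_j$ lie in the fixed compact set $K:=\supp\eta+[-1,1]$, on which $\abf$ and $\bbf$ are bounded and uniformly continuous. Hence $(\mathbf 1_{A_j})$ is bounded in $L^2(K)$ and, along a further subsequence, converges weakly there to some $\theta$ with $0\le\theta\le 1$ a.e.\ and $\int_K\theta=\lim_j\meas(A_j)\ge\e_0>0$.

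Next I would pin down where $\theta$ is concentrated. Since $\mathbf 1_{A_j}$ vanishes off $\supp\eta+(-\d_j,\d_j)$ and $\d_j\to 0$, passing to the weak limit gives $\theta=0$ a.e.\ on $\R\setminus\supp\eta$. For the degeneracy, split $\LL(i\tau,i\k,v)=\k\cdot\bbf(v)\k+i(\tau+\abf(v)\cdot\k)$; since $\k\cdot\bbf(v)\k\ge 0$, the bound $|\LL|\le\d_j$ on $A_j$ forces, pointwise, $\mathbf 1_{A_j}(v)\,\k_j\cdot\bbf(v)\k_j\le\d_j$ and $\mathbf 1_{A_j}(v)\,|\tau_j+\abf(v)\cdot\k_j|\le\d_j$. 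Replacing $(\tau_j,\k_j)$ by $(\tau_*,\k_*)$ inside these products costs at most $\|\k_j\cdot\bbf(\cdot)\k_j-\k_*\cdot\bbf(\cdot)\k_*\|_{L^\infty(K)}+\|\abf(\cdot)\cdot(\k_j-\k_*)\|_{L^\infty(K)}+|\tau_j-\tau_*|\to 0$ (here the uniform continuity and boundedness on $K$ are used), so $\mathbf 1_{A_j}(v)\,\k_*\cdot\bbf(v)\k_*\to 0$ and $\mathbf 1_{A_j}(v)(\tau_*+\abf(v)\cdot\k_*)\to 0$ uniformly on $K$, hence strongly in $L^2(K)$. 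As multiplication by the fixed bounded functions $v\mapsto\k_*\cdot\bbf(v)\k_*$ and $v\mapsto\tau_*+\abf(v)\cdot\k_*$ is weakly continuous on $L^2(K)$, these weak limits equal $\theta(v)\,\k_*\cdot\bbf(v)\k_*$ and $\theta(v)(\tau_*+\abf(v)\cdot\k_*)$, and both must vanish a.e.

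Collecting the three conclusions, $\theta$ is essentially supported in
\[
\big\{\, v\in\supp\eta;\ \tau_*+\abf(v)\cdot\k_*=0 \ \text{and}\ \k_*\cdot\bbf(v)\k_*=0 \,\big\},
\]
which has Lebesgue measure zero by the nondegeneracy condition \eqref{1.nondeg} applied at the unit frequency $(\tau_*,\k_*)$. Therefore $\int_K\theta=0$, contradicting $\int_K\theta\ge\e_0>0$, and \eqref{1.5.lemma2eq} follows.

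The main---and essentially only---delicate step is the uniform-in-$v$ transfer from $(\tau_j,\k_j)$ to the limit frequency $(\tau_*,\k_*)$ inside the sets $A_j$, which is exactly what the uniform continuity of $\abf$ and $\bbf$ on the compact set $K$ provides; the rest is routine weak compactness, and the parameter $\g$ plays no role beyond fixing the ambient support. The same argument in fact yields the stronger statement that $\meas\{v\in\supp\eta+(-\d,\d);\ |\LL(i\tau,i\k,v)|\le\d\}\to 0$ as $\d\to 0_+$, uniformly over $\tau^2+|\k|^2=1$.
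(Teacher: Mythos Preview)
Your proof is correct and follows the same contradiction scheme as the paper: extract a convergent subsequence of unit frequencies $(\tau_j,\k_j)\to(\tau_*,\k_*)$, use uniform continuity of $\abf,\bbf$ on the fixed compact $\supp\eta+[-1,1]$ to transfer to the limiting frequency, and invoke the nondegeneracy condition \eqref{1.nondeg}. The paper's execution is slightly leaner---it simply observes $A_j\subset\{v\in\supp\eta+(-\d_j,\d_j);\,|\LL(i\tau_*,i\k_*,v)|\le\d_j+\ve_j\}$ for some $\ve_j\to 0$ and applies dominated convergence, bypassing your weak-$L^2$ compactness of indicators---but your argument is equally valid.
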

\begin{proof}
	Assume, by absurd, that the conclusion \eqref{1.5.lemma2eq} is false, and denote by $\Sbb^N$ the sphere in $\R\X\R^N$. Under such an assumption, there would exist some $\vartheta > 0$, $\d_n \to 0_+$, and $(\tau_n, \k_n) \in \Sbb^N$ such that
	\begin{equation}
	\meas\Big\{ v \in \supp \eta_{\d_n,\g};  |\LL(i\tau_n, i\k_n, v)| \leq \d_n \Big\} \geq \vartheta \text{ for all $n \in \N$}. \label{1.5.lemma2eq'}
	\end{equation}
	Passing to a subsequence if necessary, we may assume that $(\tau_n, \k_n) \to (\tau_\infty, \k_\infty) \in \Sbb^N$. In light of the uniform continuity of $\LL(i\,\cdot\,, i\,\cdot\,, \,\cdot\,)$  over compact sets of $\Sbb^N\X\R_v$, and of the assertion (b.i) in Lemma \ref{1.2.1.molleta}, \eqref{1.5.lemma2eq'} implies that
	\begin{equation}
	\meas\Big\{ v \in \supp \eta + (-\d_n, \d_n);  |\LL(i\tau_\infty, i\k_\infty, v)| \leq \d_n + \ve_n \Big\} \geq \vartheta  \label{1.5.lemma2eq''}
	\end{equation}
	for all $n \in \N$ and some $\ve_n \to 0_+$. Notwithstanding, amalgamating  the Lebesgue dominated convergence theorem  and the nondegeneracy condition \eqref{1.nondeg},
	\begin{equation*}
	\lim_{n \to \infty }\meas\Big\{ v \in \supp \eta + (-\d_n, \d_n);  |\LL(i\tau_\infty, i\k_\infty, v)| \leq \d_n+ \ve_n \Big\} = 0,
	\end{equation*}
	which is a blatant contradiction of \eqref{1.5.lemma2eq''}. Once the absurd hypothesis cannot hold, the desired limit \eqref{1.5.lemma2eq} is thus established.
\end{proof}

\subsection{The analysis of $\vf_{m,n}^{(3)}$.} Let us reinterpret the results of the previous subsection to the context of $\vf_{m,n}^{(3)}$.
\begin{lemma} \label{1.6.lemma}
	The following statements hold.
	\begin{enumerate}
		\item There exists an exponent $\rr = \rr_p > 0$ independent of $0 < \d$ and $\g < 1$, such that, for all $m$ and $n \in \N$,
		\begin{align}
		\bbE\Vert \varphi \vf_{m,n}^{(3)} &\Vert_{L^s(\R_t\X\R_x^N)}^r \leq C \Vert \eta_{\d,\g} \Vert_{L^\infty(\R)}^{r}\nonumber\\& \Bigg( \sup_{\substack{(\tau, \k) \in \R\X M^\perp\\\tau^2 + |\k|^2 = 1}} \meas\Big\{ v \in \supp \eta_{\d, \g};  |\LL(i\tau, i\k, v)| \leq \d \Big\}\Bigg)^{r\mathfrak \rr}, \label{1.6.estvf3}
		\end{align}
		where $C = C\left( \Vert \varphi \Vert_{L_{t,x}^s \cap L_{t,x}^\infty}, \sup_{n \in \N} \Vert f_n \Vert_{L_\omega^r L_{t,x,v}^p} \right)$.
		\item It holds that
		\begin{equation}
		\sup_{\substack{(\tau, \k) \in \R\X M^\perp\\\tau^2 + |\k|^2 = 1}} \meas\Big\{ v \in \supp \eta_{\d, \g};  \LL(i\tau, i\k, v)| \leq \d \Big\} \to 0 \text{ as $\d \to 0_+$}. \label{1.6.lemma2eq}
		\end{equation}
	\end{enumerate}
\end{lemma}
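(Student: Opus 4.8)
The plan is to treat $\vf_{m,n}^{(3)}$ in complete analogy with the analysis of $\vf_{m,n}^{(2)}$ carried out in Lemmas \ref{1.5.lemma1} and \ref{1.5.lemma2}; the only genuinely new ingredient is an elementary identity which relates the restricted normalized symbol to the full symbol evaluated at a normalized frequency lying in $\R\X M^\perp$. Specifically, I would first record the following. Let $(\tau,\k)\in\R\X\R^N$ satisfy $\rho':=\sqrt{\tau^2+|P_{M^\perp}\k|^2}>0$, and set $(\tau_0,\k_0):=\tfrac{1}{\rho'}(\tau,P_{M^\perp}\k)\in\R\X M^\perp$, so that $\tau_0^2+|\k_0|^2=1$. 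Then
$$\LL(i\tau_0,i\k_0,v)=(\wt{R\LL})(i\tau,i\k,v)\qquad\text{for every }v\in\R,$$
because $\k_0\in M^\perp$ together with $R(\bbf(v))\subset M$ makes the parabolic part $\k_0\cdot\bbf(v)\k_0$ vanish and forces $\abf(v)\cdot\k_0=(P_{M^\perp}\abf)(v)\cdot\k_0$, so that $\LL(i\tau_0,i\k_0,v)=(R\LL)(i\tau_0,i\k_0,v)$, while $(R\LL)(i\tau_0,i\k_0,v)=(\wt{R\LL})(i\tau,i\k,v)$ since the restricted symbol is linear and depends on $\k$ only through $P_{M^\perp}\k$. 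Hence, for each such $(\tau,\k)$,
$$\big\{v\in\supp\eta_{\d,\g};\ |(\wt{R\LL})(i\tau,i\k,v)|\leq\d\big\}\subset\big\{v\in\supp\eta_{\d,\g};\ |\LL(i\tau_0,i\k_0,v)|\leq\d\big\},$$
and the exceptional set of frequencies $\{(\tau,\k):\rho'=0\}=\{0\}\X M$ is Lebesgue-null.

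To prove assertion (1), I would follow the proof of Lemma \ref{1.5.lemma1} step by step, with the operator $T_{\d,\g}$ replaced by
$$(T'_{\d,\g}f)(t,x)=\FF_{t,x}^{-1}\bigg[\int_{\R_v}\eta_{\d,\g}(v)\,\psi\!\bigg(\tfrac{\wt\LL(i\tau,i\k,v)}{\d}\bigg)\lambda\!\bigg(\tfrac{(\wt{R\LL})(i\tau,i\k,v)}{\d}\bigg)\psi\!\bigg(\tfrac{\sqrt{\tau^2+|\k|^2}}{\g}\bigg)(\FF_{t,x}f)\,dv\bigg](t,x),$$
so that $\vf_{m,n}^{(3)}=T'_{\d,\g}\ff_{m,n}$ (this also shows, via the $L^p$ boundedness of the corresponding multiplier, that each $\ff_{m,n}^{(3)}$ is genuinely a function). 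Exactly as for \eqref{1.5.Lqq}, the trivial inequality \eqref{1.3.trivialineq} and the truncation properties of $\sqrt{\tau^2+|\k|^2}$, $\wt\LL$ and $(\wt{R\LL})$ from Proposition \ref{1.5.proposition1} (writing $\psi=1-\lambda$, so that the compactly supported cutoff $\lambda$ is the one to which the truncation property is applied) give $\Vert T'_{\d,\g}\Vert_{\mathscr L(L_{t,x,v}^\sg;L_{t,x}^\sg)}\leq C_\sg\Vert\eta_{\d,\g}\Vert_{L^\infty(\R)}$ for every $1<\sg<\infty$, uniformly in $0<\d,\g<1$.

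When $p=2$, the Plancherel identity and the Cauchy--Schwarz inequality in the $v$ variable bound $\Vert T'_{\d,\g}f\Vert_{L^2_{t,x}}^2$, just as in Lemma \ref{1.5.lemma1}, by
$$\Vert\eta_{\d,\g}\Vert_{L^\infty(\R)}^2\bigg(\sup_{(\tau,\k)}\meas\big\{v\in\supp\eta_{\d,\g};\ |(\wt{R\LL})(i\tau,i\k,v)|\leq\d\big\}\bigg)\Vert f\Vert_{L^2_{t,x,v}}^2,$$
and, by the displayed inclusion, the supremum here is at most
$$\sup_{\substack{(\tau_0,\k_0)\in\R\X M^\perp\\ \tau_0^2+|\k_0|^2=1}}\meas\big\{v\in\supp\eta_{\d,\g};\ |\LL(i\tau_0,i\k_0,v)|\leq\d\big\},$$
that is, the quantity appearing in \eqref{1.6.estvf3}. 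Interpolating this $L^2$ bound with the $L^\sg$ bound by the Riesz--Thorin theorem (with $\sg=\tfrac{1+p}{2}$ if $1<p<2$ and $\sg=2p$ if $2<p<\infty$) produces the $L^p$ bound with some exponent $\rr=\rr_p>0$; then Hölder's inequality, the relation $\ff_{m,n}=f_m-f_n$, and the estimate $\Vert\varphi h\Vert_{L^s_{t,x}}\leq C\Vert\varphi\Vert_{L^s_{t,x}\cap L^\infty_{t,x}}\Vert h\Vert_{L^p_{t,x}}$ yield \eqref{1.6.estvf3}, the constant absorbing $\Vert\varphi\Vert_{L^s_{t,x}\cap L^\infty_{t,x}}$ and $\sup_n\Vert f_n\Vert_{L^r_\omega L^p_{t,x,v}}$.

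Finally, assertion (2) follows by running the contradiction-by-compactness argument of Lemma \ref{1.5.lemma2} verbatim, now with $(\tau,\k)$ ranging over the unit sphere of $\R\X M^\perp$. The only point to note is that for $\k\in M^\perp$ one has $\LL(i\tau,i\k,v)=i(\tau+\abf(v)\cdot\k)$ and $\k\cdot\bbf(v)\k\equiv0$, so the nondegeneracy hypothesis \eqref{1.nondeg} already guarantees that $\{v\in\supp\eta;\ \LL(i\tau,i\k,v)=0\}$ is Lebesgue-null for every normalized $(\tau,\k)\in\R\X M^\perp$; combined with the inclusion $\supp\eta_{\d,\g}\subset\supp\eta+(-\d,\d)$, the uniform continuity of $\LL(i\cdot,i\cdot,\cdot)$ on compact subsets of $\Sbb^N\X\R_v$, and the dominated convergence theorem, this gives \eqref{1.6.lemma2eq}. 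I do not expect any real obstacle here: the whole content sits in the identity $\LL(i\tau_0,i\k_0,v)=(\wt{R\LL})(i\tau,i\k,v)$ and in the vanishing of the parabolic part of $\LL$ along the $M^\perp$-frequencies, after which both assertions reduce to the already-established Lemmas \ref{1.5.lemma1} and \ref{1.5.lemma2}. The one mild bookkeeping item is that $(\wt{R\LL})$ is a Fourier multiplier in the variables $(\tau,P_{M^\perp}\k)$, constant in $P_M\k$, so that its truncation property on $\R_\tau\X\R_\k^N$ is inherited by tensoring with the identity in the $M$-directions.
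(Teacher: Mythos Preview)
Your proposal is correct and follows essentially the same route as the paper: both reduce part (1) to the proof of Lemma \ref{1.5.lemma1} via the observation that $(\wt{R\LL})(i\tau,i\k,v)$ equals $\LL$ evaluated at a normalized frequency in $\R\X M^\perp$ (the paper phrases this as ``$(R\LL)$ and $(\wt{R\LL})$ can be seen as $\LL$ and $\wt\LL$ restricted to $\R\X(M^\perp\setminus\{0\})\X\R_v$''), and both obtain part (2) by specializing the nondegeneracy condition \eqref{1.nondeg} to $\k\in M^\perp$ and rerunning the compactness argument of Lemma \ref{1.5.lemma2}. Your write-up is simply a more explicit unpacking of what the paper sketches in two sentences.
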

\begin{proof}
	Observe that $(R\LL)(i\tau, i\k, v)$ and $(\wt{R\LL})(i\tau, i\k, v)$ can be seen as, respectively, $\LL(i\tau, i\k, v)$ and $\wt \LL(i\tau, i\k, v)$ restricted to $(\tau, \k, v) \in \R \X (M^\perp \setminus \{ 0 \}) \X \R_v $---hence their name. For it was already shown in Proposition \ref{1.5.proposition1} that $(\wt{R\LL})(i\tau, i\k, v)$ satisfies the truncation property uniformly in $v$, the derivation of the first statement becomes now indistinguishable from the proof of Lemma \ref{1.5.lemma1}.
	
	Finally, choosing $\k \in M^\perp$ in the nondegeneracy condition \eqref{1.nondeg}, we deduce that
	\begin{align*}
	\meas\Big\{ v \in \supp \eta; \LL&(i\tau, i\k, v) = 0 \Big\} = 0 \text{ $\forall (\tau, \k) \in \R\X M^\perp$ such that $\tau^2 + |\k|^2 = 1$.}
	\end{align*}
	Therefore, reprising the argument behind Lemma \ref{1.5.lemma2}, \eqref{1.6.lemma2eq} follows.
\end{proof}

\subsection{The analysis of $\vf_{m,n}^{(4)}$.} 
\subsubsection{Initial manipulations.}  It is not difficult to see 
$$(\tau, \k, v) \mapsto \psi\bigg(\frac{\sqrt{\tau^2 + |\k|^2}}{\g}\bigg)\psi\bigg(\frac{\wt\LL(i\tau, i\k, v)}{\d}\bigg) \frac{1}{\LL(i\tau,i\k, v)} $$
is a well-defined function in $(\Cc_\loc^{k, \a} \cap L^\infty)(\R_t\X\R_x^N\X\R_v)$ provided we understand it to be $0$ where $\LL(i\tau,i\k,v) = 0$. Accordingly, if we apply the Fourier transform to \eqref{1.2.eqgmn} and recall the definition $\ff_{m,n}^{(4)}$ as expressed in \eqref{1.2.2.decompg}, we thus are able to justify the formula
\begin{align}
(\FF_{t,x} \ff_{m,n}^{(3)}) &=  \psi\bigg(\frac{\sqrt{\tau^2 + |\k|^2}}{\g}\bigg) \psi\bigg(\frac{\wt\LL(i\tau,i\k, v)}{\d}\bigg) \psi \bigg( \frac{(\wt{R\LL})(i\tau,i\k, v)}{\d}\bigg)  \nonumber\\&\quad\quad\Bigg[\sum_{j \in \JJ}\frac{ \big( \tau^2 + |\k|^2 + 1 \big)^{1/2}}{\LL(i\tau,i\k,v)} \bigg(1 \pm \frac{\del^\lf}{\del v^\lf}(-\Delta_v)^{\zz/2}\bigg)(\FF_{t,x} \gf_{m,n}^{(j)}) \nonumber\\
&\quad\quad\quad - \sum_{j \in  \JJ} \frac{ \Pi_j(v) |P_M \k |^2}{\LL(i\tau,i\k,v)} \bigg(1 \pm \frac{\del^\lf}{\del v^\lf}(-\Delta_v)^{\zz/2}\bigg)(\FF_{t,x} \hh_{m,n}^{(j)}) \nonumber\\
&\quad\quad\quad + \frac{\big(|\k|^2 + 1 \big)^{1/4}}{\LL(i\tau,i\k,v)}\bigg(1 \pm \frac{\del^\lf}{\del v^\lf}(-\Delta_v)^{\zz/2}\bigg)\FF_{t,x}\left(\Psi_{m,n} \frac{dW}{dt} \right) \Bigg]. \nonumber 
\end{align}
Additionally, taking advantage that $\psi(\sqrt{\tau^2 + |\k|^2}/\g)$ cancels near the origin, we may substitute the term $(\tau^2 + |\k|^2 + 1)^{1/2}$ with $\sqrt{\tau^2 + |\k|^2}$ by modifying $\gf_{m,n}$. Therefore, this alteration yields the subdivision
\begin{align}
\vf_{m,n}^{(3)} &= \sum_{j \in \JJ} (I)_{m,n}^{(j)} + \sum_{j \in \JJ} (II)_{m,n}^{(j)} + (III)_{m,n}, \label{1.7.decompvf4}
\end{align}
where these parcels are given by
\begin{align*} 
(I)_{m,n}^{(j)} &=  \FF_{t,x}^{-1} \bigg\{ \int_\R \psi\bigg(\frac{\sqrt{\tau^2 + |\k|^2}}{\g}\bigg)  (\FF_{t,x} \wt\gf_{m,n}^{(j)})  \bigg(1 \pm (-1)^\lf \frac{\del^\lf}{\del v^\lf}(-\Delta_v)^{\zz/2}\bigg) \nonumber\\ &\bigg[ \eta_{\d,\g}(v)  \psi \bigg(\frac{\wt\LL(i\tau, i\k, v)}{\d}\bigg) \psi \bigg( \frac{(\wt{R\LL})(i\tau,i\k, v)}{\d}\bigg) \frac{\sqrt{\tau^2 +|\k|^2}}{\LL(i\tau, i\k, v)}    \bigg] \, dv \bigg\},\\
(II)_{m,n}^{(j)} &=  \FF_{t,x}^{-1} \bigg\{ \int_\R \psi\bigg(\frac{\sqrt{\tau^2 + |\k|^2}}{\g}\bigg) (\FF_{t,x} \hh_{m,n}^{(j)})   \bigg(1 \pm (-1)^\lf \frac{\del^\lf}{\del v^\lf}(-\Delta_v)^{\zz/2}\bigg) \nonumber\\ & \bigg[ \eta_{\d,\g}(v)  \psi \bigg(\frac{\wt\LL(i\tau, i\k, v)}{\d}\bigg) \psi \bigg( \frac{(\wt{R\LL})(i\tau,i\k, v)}{\d}\bigg) \frac{-\Pi_j(v)|P_M \k|^2}{\LL(i\tau, i\k, v)}    \bigg] \, dv \bigg\}, \text{ and}\\
(III)_{m,n} &=  \FF_{t,x}^{-1} \bigg\{ \int_\R \psi\bigg(\frac{\sqrt{\tau^2 + |\k|^2}}{\g}\bigg)  \FF_{t, x} \bigg( \Psi_{m,n} \frac{dW}{dt}\bigg)   \bigg(1 \pm (-1)^\lf \frac{\del^\lf}{\del v^\lf}(-\Delta_v)^{\zz/2}\bigg) \nonumber\\ & \bigg[ \eta_{\d,\g}(v)  \psi \bigg(\frac{\wt\LL(i\tau, i\k, v)}{\d}\bigg) \psi \bigg( \frac{\sqrt{\tau^2 + |\k|^2}}{\d}\bigg) \frac{\left(|\k|^2 + 1\right)^{1/4}}{\LL(i\tau, i\k, v)}    \bigg] \, dv \bigg\},
\end{align*}
and, for any $j \in \JJ$, $(\wt \gf_{m,n}^{(j)})$ still satisfies
\begin{align}
&\lim\limits_{m, n \to \infty } \bbE \bigg( \int_{\R_t} \int_{\R_x^N} \int_{\R_v} |\wt\gf_{m,n}^{(j)}(t, x, v)|^{q_j}\, dv dx dt \bigg)^{r/{q_j}} = 0. \label{1.7.limhtil}
\end{align}
Even though each $\wt \gf_{m,n}^{(j)}$ depends on $\g > 0$, this will not be of substance for now. Let us inspect each term $(I)_{m,n}^{(j)}$, $(II)_{m,n}^{(j)}$ and $(III)_{m,n}$ separately.

\subsubsection{The analysis of $(I)_{m,n}^{(j)}$.}

\begin{lemma}
	There exists a constant, independent of $m$ and $n \in \N$, such that, for all $j \in J$ and almost surely, 
	\begin{equation}
	\Vert(I)_{m,n}^{(j)}\Vert_{L^{q_j}(\R_t\X\R_x^N)} \leq C \Vert \wt \gf_{m,n}^{(j)} \Vert_{L^{q_j}(\R_t\X\R_x^N\X\R_v)}. \label{1.7.1.estlemma1}
	\end{equation}
	Consequently,
	\begin{equation}
	\lim_{m,n\to \infty} \bbE\Big\Vert\varphi\,\sum_{j\in \JJ}(I)_{m,n}^{(j)}\Big\Vert_{L^s(\R_t\X\R_x^N)}^r = 0. \label{1.7.1.estlemma2}
	\end{equation}
\end{lemma}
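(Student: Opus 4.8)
The plan is to realize $(I)_{m,n}^{(j)}$ as a superposition over the velocity $v$ of Fourier multiplier operators acting in the $(t,x)$ variables on the slices $\wt\gf_{m,n}^{(j)}(\,\cdot\,,\,\cdot\,,v)$, to show that each such multiplier is bounded on $L^{q_j}(\R_t\X\R_x^N)$ with operator norm under control uniformly in $v$, and then to integrate out $v$ by a Minkowski-plus-Hölder argument. Concretely, writing $\rho=\sqrt{\tau^2+|\k|^2}$ and
\[
\sigma_v(\tau,\k):=\psi\!\Bigl(\tfrac{\rho}{\g}\Bigr)\Bigl(1\pm(-1)^{\lf}\tfrac{\del^{\lf}}{\del v^{\lf}}(-\Delta_v)^{\zz/2}\Bigr)\Bigl[\eta_{\d,\g}(v)\,\psi\!\Bigl(\tfrac{\wt\LL(i\tau,i\k,v)}{\d}\Bigr)\psi\!\Bigl(\tfrac{(\wt{R\LL})(i\tau,i\k,v)}{\d}\Bigr)\tfrac{\rho}{\LL(i\tau,i\k,v)}\Bigr],
\]
one has $(I)_{m,n}^{(j)}(t,x)=\int_\R\bigl(\FF_{t,x}^{-1}[\sigma_v\,(\FF_{t,x}\wt\gf_{m,n}^{(j)})(\,\cdot\,,\,\cdot\,,v)]\bigr)(t,x)\,dv$, the $v$-operator being the one arising from \eqref{1.2.eqgmn} after $\lf$ integrations by parts in $v$ (together with the self-adjointness of $(-\Delta_v)^{\zz/2}$). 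The argument then rests on three points: (i) each $\sigma_v$ is an $L^{q_j}$ multiplier, uniformly in $v$; (ii) the weight in $v$ produced by (i) is integrable enough; (iii) the limit is taken using \eqref{1.7.limhtil} and the integrability of $\varphi$.

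For (i) I would invoke Lizorkin's Theorem \ref{thm.lizorkin} for the full symbol $\sigma_v$, the point being that its anisotropic Lizorkin derivatives $(\tau\del_\tau)^{a_0}(\k_1\del_{\k_1})^{a_1}\cdots(\k_N\del_{\k_N})^{a_N}\sigma_v$, with $(a_0,\dots,a_N)\le(1,\dots,1)$, are bounded uniformly for $v$ in the compact set $\supp\eta_{\d,\g}$. The crucial observation here is that on the support of the three cut-offs $\psi(\rho/\g)\psi(\wt\LL/\d)\psi(\wt{R\LL}/\d)$ the symbol $\LL(i\tau,i\k,v)$ is quantitatively nondegenerate: distinguishing the cases in which the imaginary part or the real part of $\wt\LL$ dominates and using that $\rho\ge\g/2$ there, one gets $|\LL(i\tau,i\k,v)|\ge c(\d,\g)\,\rho$; incorporating the restricted cut-off $\psi(\wt{R\LL}/\d)$ together with the dichotomy \eqref{1.2.1.alternative} from Lemma \ref{1.2.1.molleta} — so that for $v\in\supp\eta_{\d,\g}$ either $\bbf(v)=0$ or $\bbf(v)\ge\cc_\d P_M$ — one moreover confines $(\tau,\k)$ to a region where $\rho/\LL$ behaves like a bounded, anisotropically nonhomogeneous symbol of order $0$. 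To pass the $v$-differential operator $1\pm(-1)^{\lf}\del_v^{\lf}(-\Delta_v)^{\zz/2}$ through the Lizorkin test, one uses that it commutes with differentiation in $(\tau,\k)$, that for each $(\tau,\k)$ in the relevant region the bracket is a compactly supported $\Cc^{k,\a}$ function of $v$ with $\Cc^{k,\a}$-norm bounded uniformly in $(\tau,\k)$ — here enter the hypotheses $\abf,\bbf,\Pi_j\in\Cc_\loc^{k,\a}$ with $(k,\a)$ as in \eqref{1.condka}, in particular $\a>\zz$ — and that $(-\Delta_v)^{\zz/2}$ maps compactly supported $\Cc^{0,\a}$ functions ($\a>\zz$) into bounded functions with a polynomially decaying tail at infinity (cf.\ Proposition \ref{1.7.prop}). (Alternatively, the factors $\psi(\cdot/\g)$, $\psi(\cdot/\d)$ may be handled by the truncation property recorded in Proposition \ref{1.5.proposition1}, leaving only the ratio $\rho/\LL$ for the Lizorkin estimate.) This yields a pointwise bound $|\sigma_v(\tau,\k)|\le C(\d,\g)\,w(v)$, together with the same uniform control of its Lizorkin derivatives, where $w\in L^1(\R_v)\cap L^\infty(\R_v)\subset L^{q_j'}(\R_v)$.

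With (i) in hand, the generalized Minkowski inequality and the uniform multiplier bound give
\[
\bigl\Vert(I)_{m,n}^{(j)}\bigr\Vert_{L^{q_j}(\R_t\X\R_x^N)}\le C(\d,\g)\int_\R w(v)\,\bigl\Vert\wt\gf_{m,n}^{(j)}(\,\cdot\,,\,\cdot\,,v)\bigr\Vert_{L^{q_j}(\R_t\X\R_x^N)}\,dv,
\]
and Hölder's inequality in $v$ (with $w\in L^{q_j'}_v$) produces \eqref{1.7.1.estlemma1} with a constant $C$ depending on $\d,\g,q_j,N,\eta,\abf,\bbf,\Pi_j$ but \emph{not} on $m,n$. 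For \eqref{1.7.1.estlemma2}: since $s\le q_j$ and $\varphi\in(L^1\cap L^\infty)(\R_t\X\R_x^N)\subset L^t(\R_t\X\R_x^N)$ with $\tfrac1t=\tfrac1s-\tfrac1{q_j}$, Hölder's inequality gives $\Vert\varphi(I)_{m,n}^{(j)}\Vert_{L^s}\le\Vert\varphi\Vert_{L^t}\Vert(I)_{m,n}^{(j)}\Vert_{L^{q_j}}\le C\Vert\wt\gf_{m,n}^{(j)}\Vert_{L^{q_j}(\R_t\X\R_x^N\X\R_v)}$; raising to the power $r$, taking expectations, summing over the finite set $\JJ$, and invoking \eqref{1.7.limhtil} yields \eqref{1.7.1.estlemma2}. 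The hard part will be (i): verifying that the anisotropic, inhomogeneous symbol $\sigma_v$ satisfies the Lizorkin hypothesis \eqref{lizorkinhyp} \emph{uniformly in} $v$ after the fractional differentiation $\del_v^{\lf}(-\Delta_v)^{\zz/2}$ has been carried out on the bracket — this is where the nondegeneracy of $\LL$ on the cut-off support, the dichotomy \eqref{1.2.1.alternative}, and the Hölder calculus of the fractional Laplacian all have to be combined.
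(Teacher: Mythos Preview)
Your outline follows the paper's route almost exactly: transfer $1\pm(-1)^{\lf}\del_v^{\lf}(-\Delta_v)^{\zz/2}$ onto the bracket, use the dichotomy $v\in K_h$ versus $v\in K_p$ from Lemma~\ref{1.2.1.molleta}, check Lizorkin's hypothesis \eqref{lizorkinhyp} on each piece (in a basis adapted to $M\oplus M^\perp$), and integrate out $v$ via Minkowski--H\"older, with the weight $w(v)\sim(1+|v|)^{-1-\zz}$ when $\zz>0$ coming from the singular-integral representation of $(-\Delta_v)^{\zz/2}$ acting on a compactly supported H\"older function.

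One step is stated too loosely, however. The lower bound $|\LL|\ge c(\d,\g)\rho$ that you extract from $\psi(\wt\LL/\d)$ is true but \emph{insufficient} for the Lizorkin derivatives when $v\in K_p$: differentiating $\rho/\LL$ in a direction $e_\nu\in M$ gives a term $\rho\,\del_{e_\nu}\LL/\LL^2$ with $|\del_{e_\nu}\LL|\lesssim 1+|P_M\k|$, and after the Lizorkin weight $|e_\nu\cdot\k|\le|P_M\k|$ one is left with $|P_M\k|(1+|P_M\k|)\rho/(c\rho)^2$, which is unbounded as $\rho\to\infty$ along $|P_M\k|\sim\rho$. The correct intermediate estimate --- isolated in the paper as Lemma~\ref{1.7.boundlemma} --- is the anisotropic bound
\[
|\LL(i\tau,i\k,v)|\ \ge\ c(\d,\g)\bigl(\rho+|P_M\k|^2\bigr)\qquad\text{for }v\in K_p,
\]
and this is exactly where the restricted cutoff $\psi((\wt{R\LL})/\d)$ does real work: it forces $|\tau+(P_{M^\perp}\abf)(v)\cdot\k|\ge\tfrac\d2\sqrt{\tau^2+|P_{M^\perp}\k|^2}$, which combined with $\bbf(v)\ge\cc_\d P_M$ supplies the missing $|P_M\k|^2$ growth. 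With this bound in hand your ``anisotropically nonhomogeneous symbol of order~$0$'' claim is justified and the rest goes through. (As an aside, Proposition~\ref{1.7.prop} concerns multiplication in negative Sobolev spaces, not fractional-Laplacian tails; the polynomial decay you invoke is immediate from the representation~\eqref{1.7.defLapl}.)
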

\begin{proof} 
	\textit{Step \#1}: In order to fix ideas, let us assume firstly that $\zz = 0$, i.e., $\lf = \ell$, so that, if $\ell \geq 1$, $\LL(i\tau, i\k, v)$ is of class $\Cc_{\loc}^{\ell-1,1}$ with respect to the velocity variable $v$ (notice that  $\LL(i\tau, i\k, v)$ is polynomial in $\tau$ and $\k$, and hence infinitely differentiable in these arguments). The crux of our reasoning is based on the construction of $\eta_{\d,\g}$---more specifically on assertion (b.ii) of Lemma \ref{1.2.1.molleta}---; thus, let us engage the same notations of this proposition here as well. Since the integrand defining $(I)_{m,n}^{(j)}$ is supported for $v \in \supp \eta_{\d,\g}$, we may bifurcate our attention between the alternatives that $v \in K_h$ or $v \in K_p$. 
	
	\textit{Step \#1.1}: Let us first investigate the case $v \in K_h$, in which, because $\bbf(v) = 0$, Equation \eqref{1.2.eqgmn} has a hyperbolic character. Letting $(\tau',\k')$ be the normalized frequency as defined in \eqref{2.deftauk'}, it holds that
	\begin{align*}
	\begin{dcases}
	\wt \LL(i\tau, i\k, v) = \LL(i\tau', i\k', v), \text{and}\\
	\LL(i\tau, i\k, v) = \sqrt{\tau^2 + |\k|^2} \LL(i\tau', i\k', v).
	\end{dcases}
	\end{align*}
	Observe that the last relations above remains true if one substitutes $v$ with another $w \in \R$, provided that $|v - w|< \dist(K_h, K_p)$.
	
	As a result, putting $\wt\psi(z) = \frac{1}{z} \psi(z)$ (which is, by all means, a regular function), each integrand of $(I)_{m,n}^{(j)}$ is transformed into
	\begin{align}
	\FF_{t,x}^{-1} &\bigg\{ \frac{1}{\d}\psi\bigg(\frac{\sqrt{\tau^2 + |\k|^2}}{\g}\bigg)  (\FF_{t,x} \wt\gf_{m,n}^{(j)})  \bigg(1 \pm (-1)^\ell \frac{\del^\ell}{\del v^\ell} \bigg) \nonumber\\&\quad\quad \bigg[ \eta_{\d,\g}(v) \wt\psi \bigg(\frac{\LL(i\tau', i\k', v)}{\d}\bigg) \psi \bigg(\frac{(\wt{R\LL})(i\tau', i\k', v)}{\d}\bigg) \bigg] \bigg\} \nonumber\\
	&\quad\quad\quad\quad= \sum_{\nu=0}^\ell \eta_{\d,\g}^{(\nu)}(v) \FF_{t,x}^{-1} \bigg\{ \psi\bigg(\frac{\sqrt{\tau^2 + |\k|^2}}{\g}\bigg) m_\nu(\tau, \k, \xi) (\FF_{t,x} \wt\gf_{m,n}^{(j)})  \bigg\}, \label{1.7.1.estKh}
	\end{align}
	with each $m_\nu(\tau, \k, v)$ being given by
	\begin{equation*}
	\begin{dcases}
	m_0(\tau, \k, v) =  \frac{1}{\d}\bigg( 1 \pm (-1)^\ell \frac{\del^\ell}{\del v^\ell}\bigg) \bigg[ \wt\psi \bigg(\frac{\LL(i\tau', i\k', v)}{\d}\bigg) \psi \bigg(\frac{(\wt{R\LL})(i\tau, i\k, v)}{\d}\bigg) \bigg], \text{ and}\\
	m_\nu(\tau, \k, v) = \pm\frac{(-1)^\ell}{\d} \binom{\ell}{\nu}\bigg(\frac{\del^{\ell-\nu}}{\del v^{\ell - \nu}}\bigg) \bigg[ \wt\psi \bigg(\frac{\LL(i\tau', i\k', v)}{\d}\bigg) \psi \bigg(\frac{(\wt{R\LL})(i\tau, i\k, v)}{\d}\bigg) \bigg]
	\end{dcases}
	\end{equation*}
	for $\nu = 1, \ldots, \ell$. On the grounds Theorem \ref{thm.lizorkin}, all of these symbols $m_{\nu}$ are $L^{q_j}(\R_t\X\R_x^N)$--multipliers for every $j \in \JJ$ and their norms are bounded in $v \in K_h$; in other words, for $v \in K_h$, \eqref{1.7.1.estKh} implies
	\begin{align}
	\bigg\Vert \FF_{t,x}^{-1} &\bigg\{ \frac{1}{\d}\psi\bigg(\frac{\sqrt{\tau^2 + |\k|^2}}{\g}\bigg)  (\FF_{t,x} \wt\gf_{m,n})  \bigg(1 \pm (-1)^\ell \frac{\del^\ell}{\del v^\ell} \bigg) \nonumber\\&\quad\quad\quad\quad \bigg[ \eta_{\d,\g}(v) \wt\psi \bigg(\frac{\LL(i\tau', i\k', v)}{\d}\bigg) \psi \bigg(\frac{(\wt{R\LL})(i\tau', i\k', v)}{\d}\bigg) \bigg] \bigg\} \bigg\Vert_{L^{q_j}(\R_t\X\R_x^N)} \nonumber\\
	&\quad\quad\quad\quad\quad\quad\quad\quad\leq C_j\bigg(\sum_{\nu=0}^\ell |\eta_{\d,\g}^{(\nu)}(v) | \bigg) \Vert  \wt\gf_{m,n}^{(j)}(\,\cdot\,, \,\cdot\,, v) \Vert_{L^{q_j}(\R_t\X\R_x^N)} \text{ a.s.},\label{1.7.1.estKh'}
	\end{align}
	for all $j \in \JJ$, where $C_j$ does not depend on $v \in K_h$, and on $m$ and $n \in \N$.
	
	\textit{Step \#1.2}: The last estimate is enough to control the integral $(I)_{m,n}^{(j)}$ when $v$ ranges over $K_h$. Let us now investigate the other dichotomic option: let $v \in K_p$ be given. Even though now there is no simplification in the integrand of $(I)_{m,n}^{(j)}$, we may still perform the necessary differentiations, arriving at the formula
	\begin{align}
	(I)_{m,n}^{(j)} &= \FF_{t,x}^{-1} \bigg[\pm(-1)^\ell \int_\R \eta_{\d,\g}^{(\ell)}(v) \psi\bigg( \frac{\wt \LL(i \tau, i\k, v)}{\d} \bigg)\psi\bigg( \frac{(\wt {R\LL})(i \tau, i\k, v)}{\d} \bigg)  \nonumber\\&\quad\quad\quad\quad \psi\bigg( \frac{\sqrt{\tau^2 + |\k|^2}}{\g} \bigg) \frac{\sqrt{\tau^2 + |\k|^2}}{\LL(i\tau, i\k, v)} (\FF_{t, x} \wt\gf_{m,n}^{(j)}) \, dv \bigg] + \big[\text{ similar terms }\big]. \label{1.7.1.IKp}
	\end{align} 
	Although the omitted parcels could have been explicitly expressed via the Leibniz's and Faà di Bruno's rules, all portions can be handled analogously. Thus, we will concentrate ourselves  with the sole portion above.
	
	We are thus led to examine the Fourier operator
	\begin{align}
	f \mapsto \FF_{t,x}^{-1}\bigg[ \psi\bigg( \frac{\wt \LL(i \tau, i\k, v)}{\d} \bigg) \psi\bigg( \frac{\sqrt{\tau^2 + |\k|^2}}{\g} \bigg)\psi\bigg( &\frac{(\wt {R\LL})(i \tau, i\k, v)}{\d} \bigg)  \nonumber\\&\quad\quad\frac{\sqrt{\tau^2 + |\k|^2}}{\LL(i\tau, i\k, \xi)} (\FF_{t, x} f)  \bigg]. \label{1.7.1.fouriermult}
	\end{align}
	In order to verify that such an expression defines an $L_{t,x}^{q_j}$--multiplier, let us first establish a simple bound which will be stated as Lemma, since it will again be instrumental later on as well.
	
	\begin{lemma} \label{1.7.boundlemma}
		There exists a constant $C = C(\d,\g) > 0$, such that
		\begin{align}
		\bigg|\psi\bigg( \frac{\wt \LL(i \tau, i\k, v)}{\d} \bigg) \psi\bigg( \frac{\sqrt{\tau^2 + |\k|^2}}{\g} &\bigg)\psi\bigg( \frac{(\wt {R\LL})(i \tau, i\k, v)}{\d} \bigg)\frac{\sqrt{\tau^2 + |\k|^2} + |P_M\k|^2}{\LL(i\tau, i\k, v)}\bigg| \nonumber \\
		&\leq C \text{ for all $(\tau, \k,v) \in \left( \R\X\R^N \setminus\{ 0 \} \X M  \right)  \X K_p$}. \label{1.7.bound} 
		\end{align}
	\end{lemma}
	\begin{proof}
		Fix $v \in K_p$. We can suppose that $(\tau, \k)$ is such that $\psi((\wt {R\cL})(i\tau, i\k, v)/\d)\neq 0$, otherwise the entire expression is equal to zero, trivializing the estimate. In this case,
		\begin{align*}
		|\tau + \abf(v) \cdot \k| &= \left|(\tau + \abf(v) \cdot P_{M^\perp} \k) + (\abf(v) \cdot P_M \k)\right| \\
		&\geq \frac{\d}{2}\sqrt{\tau^2 + |P_{M^\perp} \k|^2} - \bigg(\sup_{v \in K_p} |\abf(v)| \bigg) |P_M \k| \\
		&\geq \frac{\d}{2}\sqrt{\tau^2 + |P_{M^\perp} \k|^2} - \frac{\cc_\d}{2} |P_{M}\k|^2 - A,
		\end{align*}
		where $A$ depends only on $\cc_\d$ and $K_p$, thus solely on $\d$ and $\g$ (recall we are employing the notations of Lemma \ref{1.2.1.molleta}). Hence, from the trivial inequality $\frac{1}{\sqrt 2}(|a| + |b|) \leq \sqrt{a^2 + b^2}$, and the fact that $\k \cdot \bbf(v) \k \geq \cc_\d |P_M\k|^2$ as $v \in K_p$,
		\begin{align*}
		|\LL(i\tau, i\k, v) | \geq \frac{1}{\sqrt 2} \left( \frac{\d}{2} \sqrt{\tau^2 + |P_{M^\perp} \k|^2} + \frac{\cc_\d}{2} |P_M \k|^2 - A \right),
		\end{align*}
		concluding the existence of constants $B$ and $R > 0$, depending only on $\d$ and $\g$, such that
		\begin{equation*}
		|\LL(i\tau, i\k, v) | \geq B \Big( \sqrt{\tau^2 + |\k|^2} + |P_M \k|^2 \Big)
		\end{equation*}
		if $\sqrt{\tau^2 + |\k|^2} \geq R$ and $\psi((\wt {R\LL})(i\tau, i\k, v)/\d)\neq 0$. This shows \eqref{1.7.bound} for $\sqrt{\tau^2 + |\k|^2}$ sufficiently large.

		On the other land, because $\{ \frac{\g}{2} \leq \sqrt{\tau^2 + |\k|^2} \leq R  \} \X K_p$ is compact, the continuity of
		$$(\tau, \k, v) \mapsto \psi\bigg( \frac{\wt \LL(i \tau, i\k, v)}{\d} \bigg)\frac{\sqrt{\tau^2 + |\k|^2} + |P_M\k|^2}{\LL(i\tau, i\k, v)}$$
		in this region proves \eqref{1.7.bound} for $\sqrt{\tau^2 + |\k|^2}$ of ``intermediate size''. Finally, for $\sqrt{\tau^2 + |\k|^2} < \frac{\g}{2}$, $\psi(\sqrt{\tau^2 + |\k|^2}/\g) = 0$, and, therefore, the desired bound is immediate in this region as well. The lemma is hereby demonstrated. 
	\end{proof}
	
	As a consequence, in order to apply Theorem \ref{thm.lizorkin}, we can argue just as in Proposition \ref{1.5.proposition1}: choose an orthonormal basis $e_0, e_1, \ldots, e_N$ in $\R\X\R^N$ such that $e_0 = (1,0)$, and, for $1\leq \nu \leq N$, $e_\nu = (0, \phi_\nu)$, with $\phi_\nu$ belonging either to $M$ or $M^\perp$. In these coordinates, it is not troublesome to verify the estimate \eqref{lizorkinhyp} uniformly for $v \in K_p$. Hence, according to Theorem \ref{thm.lizorkin} and the bound \eqref{lizorkincon}, \eqref{1.7.1.fouriermult} indeed defines an $L_{t,x}^{q_j}$--multiplier whose norm is bounded for $v \in K_p$.
	
	Therefore, reprising this reasoning, and agglutinating all parcels, the $L_{t,x}^{q_j}$--norm of the left-side of \eqref{1.7.1.IKp} can be estimated by
	\begin{align}
	\leq C_j\bigg(\sum_{\nu=0}^\ell |\eta_{\d,\g}^{(\nu)}(v) | \bigg) \Vert  \wt\gf_{m,n}^{(j)}(\,\cdot\,, \,\cdot\,, v) \Vert_{L^{q_j}(\R_t\X\R_x^N)} \text{ a.s.},\label{1.7.1.estKp}
	\end{align}
	where $C_j > 0$ is uniform for $v \in K_p$, and $m$ and $n \in \N$. 
	
	\textit{Step \#1: (Conclusion).} Once \eqref{1.7.1.estKp} is exactly the same estimate as \eqref{1.7.1.estKh'}, it is valid for all $v \in \supp \eta_{\d,\g} = K_h \cup K_p$. Consequently, integrating in $v$, invoking the trivial estimate \eqref{1.3.trivialineq}, and taking the expected value, we deduce \eqref{1.7.1.estlemma1}. Lastly, \eqref{1.7.1.estlemma2} is a direct byproduct of \eqref{1.7.limhtil}.
	
	\textit{Step \#2:} Assume now the fractional case $0 < \zz < 1$. Then, equation \eqref{1.7.1.IKp} reads
	\begin{align}
	(I)_{m,n} &= \FF_{t,x}^{-1} \bigg[\pm(-1)^\lf \int_\R (-\Delta_v)^{\zz/2} \bigg\{ \eta_{\d,\g}^{(\lf)}(v)  \psi\bigg(\frac{\wt \LL(i\tau,i\k, v)}{\d} \bigg)\psi\bigg( \frac{(\wt {R\LL})(i \tau, i\k, v)}{\d} \bigg)  \nonumber \\&\quad\quad\quad\quad\psi\bigg(\frac{\sqrt{\tau^2 + |\k|^2}}{\g}\bigg) \frac{\sqrt{\tau^2 + |\k|^2}}{\LL(i\tau, i\k, v)} \bigg\} (\FF_{t,x} \wt \gf_{m,n}^{(j)})  \, dv \bigg] + \big[\text{ similar terms }\big]. \label{1.6.I1'}
	\end{align} 
	Once more, let us exclusively focus on the leading term. 
	
	First of all, recall that the operator $(-\Delta_v)^{\zz/2}$ can be defined for all sufficiently smooth functions $\phi : \R \to \R$ as
	\begin{equation}
	\Big((-\Delta_v)^{\zz/2} \phi\Big)(v) = c_\zz \int_{\R} \frac{\phi(v)-\phi(w)}{|v-w|^{1 + \zz}} \, dw; \label{1.7.defLapl}
	\end{equation}
	(see, e.g., \textsc{P.R. Stinga} \cite{Sti}). While the numerical constant  $c_{\zz}$ is given by
	$$c_\zz = \frac{2^\zz}{\sqrt \pi} \frac{\Gamma\left(\frac{1+\zz}{2}\right)}{|\Gamma\left(-\frac{\zz}{2}\right)|},$$ its precise value will not be needed. In contrast to the first step, we observe that it is not possible to detach $\eta_{\d,\g}(v)$ from the other factors, and so the inequality \eqref{1.3.trivialineq} is no longer of applicability here. Moreover, due to the nonlocality of the fractional Laplacean, $v$ now varies through the entire real line rather than on the compact $\supp \eta_{\d,\g} = K_h \cup K_p$. As a result, we have no other choice than to show that $(-\Delta_v)^{\zz/2} m$, with $m(\tau, \k, v)$ given by
	\begin{align}
	m(\tau, \k, v) &=   \eta_{\d,\g}^{(\lf)}(v)  \psi\bigg(\frac{\wt \LL(i\tau,i\k, v)}{\d} \bigg)\psi\bigg( \frac{(\wt {R\LL})(i \tau, i\k, v)}{\d} \bigg) \nonumber \\&\quad\quad\quad\quad\quad\quad\quad\quad\quad\quad\quad\quad\quad \psi\bigg(\frac{\sqrt{\tau^2 + |\k|^2}}{\g}\bigg)  \frac{\sqrt{\tau^2 + |\k|^2}}{\LL(i\tau, i\k, v)}, \label{1.7.1.defm}
	\end{align}
	is an $L_{t,x}^{q_j}$--multiplier with a well-behaved norm as $|v| \to \infty$.
	
	\begin{proposition} \label{1.7.1.prop}
		For any $v \in \R_v$ and $j \in \JJ$, $(\tau, \k) \mapsto \big((-\Delta_v)^{\zz/2} m\big)(\tau, \k, v)$ is an $L^{q_j}(\R_t\X\R^N)$--multiplier. Moreover, if $T_v$ is the associated linear transformation
		$$(T_v \phi)(t,x) = \FF_{t, x}^{-1} \Big\{\big((-\Delta_v)^{\zz/2} m\big)(\,\cdot\,, \,\cdot\,, v) \FF_{t, x}\phi(\,\cdot\,, \,\cdot\,) \Big\}(t,x),$$
		then, there exists a constant $C_j > 0$ such that
		\begin{equation}
		\Vert T_v \Vert_{\mathscr{L}(L_{t,x}^{q_j})} \leq \frac{C_j}{(1 + |v|)^{1 + \zz}} \label{1.7.1.decayest}
		\end{equation}
		for all $v \in \R$.
	\end{proposition}
	\begin{proof}
		\textit{Step \#1}: Let us first show that the $\big((-\Delta)^{\zz/2} m\big)(\tau, \k, v)$ is an $L^q(\R_t\X\R^N)$--Fourier multiplier for any $v \in \R$ and $1 < q < \infty$. 
		
		Write
		\begin{align}
		\Big((-\Delta_v)^{\zz/2} m\Big)(\tau, \k, v) &= c_\zz \int_{|w|<\ve_0} \frac{m(\tau,\k,v)- m(\tau,\k, v + w)}{|w|^{1 + \zz}} \, dw \nonumber\\ &\quad+ c_\zz \int_{|w|>\ve_0} \frac{m(\tau,\k,v)-m(\tau,\k, v + w)}{|w|^{1 + \zz}} \, dv, \label{1.7.1.decompLapl}
		\end{align}
		where $\ve_0$ is the least number between $\dist(K_h, K_p)$ and, say, $1$. Evidently, as $m(\tau,\k, v)$ has compact support in $v$ and is an $L_{t,x}^{q}$--multiplier whose norm is globally bounded in $v \in \R_v$, the second integral above poses no difficulty: it represents an $L_{t,x}^{q}$--multiplier as well. 
		
		On the other hand, for any fixed $(\tau, \k) \in \R \X \left(\R^N \setminus M\right)$ and any multi-index $\af$ in $\R\X\R^N$, the function $v \in \R \mapsto (D^\af m)(\tau, \k, v)$ lies in the Hölder class $\Cc_c^\a(\R_v)$. Thus, once that $\a > \zz$, the singular integral in \eqref{1.7.1.decompLapl} not only converges absolutely for any $v \in \R$, but also may be freely differentiated in $(\tau, \k)$ under the integral sign.
		
		Dividing between the cases $v \in K_h$ (in which $m$ is homogeneous of degree $0$), and $v \in K_p$ (for which one may justifiably employ Lemma \ref{1.7.boundlemma}), one can apply Theorem \ref{thm.lizorkin} to once more show that $\big((-\Delta_v)^{\zz/2} m\big)(\tau, \k, v)$ is an $L_{t,x}^{q}$--multiplier.
		
		\textit{Step \#2}: Let us now confirm the decay estimate \eqref{1.7.1.decayest}. Evidently, as a corollary of the previous argument, $\Vert T_v \Vert_{\mathscr{L}(L_{t,x}^{q})}$ is bounded for if $v \in \R$ also remains bounded.
		
		That said, let $L > 0$ be any number for which $\supp \eta_{\d, \g} \subset (-L,L)$, so that
		\begin{align*}
		\big((-\Delta_v)^{\zz/2} m\big)(\tau, \k, v) &= -c_\zz \int_{-L}^{L} \frac{m(\tau,\k, w)}{|v - w|^{1 + \zz}} \, dw
		\end{align*}
		whenever $|v| > 2L$. From this formula, it is easily seen the existence of some constant $C_q > 0$ such that  $$\Vert T_v\Vert_{\mathscr{L}(L_{t,x}^{q})} \leq \frac{C_q }{|v|^{1+\zz}} \text{ for $|v| > 2L$.}$$ 
		
		The amalgamation of the statements of the former two paragraphs yields \eqref{1.7.1.decayest}, proving hereby the proposition.
	\end{proof}
	
	Consequently, in virtue of the last lemma and the Minkowski's and Hölder's inequalities,
	\begin{align*}
	\bigg\Vert \int_\R \FF_{t,x}^{-1} \bigg\{ \Big((-\Delta_v)^{\zz/2} m\Big)(\,\cdot\,,\,\cdot\,,v) &(\FF_{t, x} \wt\gf_{m,n}^{(j)})(\,\cdot\,, \,\cdot\,, v) \, dv \bigg\} \bigg\Vert_{L^{q_j}(\R_t\X\R_x^N)} \\ &\leq C_j \int_\R \frac{\Vert \wt\gf_{m,n}^{(j)}(\,\cdot\,, \,\cdot\,, v) \Vert_{L^{q_j}(\R_t\X\R_x^N)}}{(1+|v|)^{1+\zz}} \, dv   \\
	&\leq C_j\Vert \wt\gf_{m,n}^{(j)} \Vert_{L^{q_j}(\R_t\X\R_x^N\X\R_v)} \text{ a.s.},
	\end{align*}
	where $C_j$ is independent of $m$ and $n$. Returning to \eqref{1.6.I1'} and repeating this investigation to each and every element defining $(I)_{m,n}^{(j)}$, we once more conclude the estimate \eqref{1.7.1.estlemma1}, and hence \eqref{1.7.1.estlemma2} per \eqref{1.7.limhtil}. The lemma is proven.
\end{proof}

\subsubsection{The analysis of $(II)_{m,n}^{(j)}$.}
The investigation of $(II)_{m,n}^{(j)}$ is virtually identical to the one of $(I)_{m,n}^{(j)}$; thus the details will be omitted. In spite of that, let us only indicate that, once $\Pi_j(v) = 0$ whenever $v \in K_h$, one needs to investigate the alternative $v \in K_p$.

\begin{lemma}
	There exists a constant $C$, independent of $m$ and $n \in \N$, such that, almost surely and for all $j \in \JJ$,
	\begin{equation*}
	\Vert(II)_{m,n}^{(j)}\Vert_{L^{q_j}(\R_t\X\R_x^N)} \leq C \Vert  \hh_{m,n}^{(j)} \Vert_{L^{q_j}(\R_t\X\R_x^N\X\R_v)}. \label{1.7.2.estlemma1}
	\end{equation*}
	Consequently,
	\begin{equation}
	\lim_{m,n\to \infty} \bbE\Big\Vert\varphi\sum_{j \in \JJ}(II)_{m,n}^{(j)}\Big\Vert_{L^s(\R_t\X\R_x^N)}^r = 0. \label{1.7.2.estlemma2}
	\end{equation}
\end{lemma}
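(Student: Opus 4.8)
The plan is to transcribe the argument of the preceding lemma, the only change being that the Fourier symbol standing in front of $(\FF_{t,x}\hh_{m,n}^{(j)})$ is now $-\Pi_j(v)|P_M\k|^2/\LL(i\tau,i\k,v)$ rather than $\sqrt{\tau^2+|\k|^2}/\LL(i\tau,i\k,v)$. The first step is a support reduction. Since $\supp\Pi_j\subset\supp\bbf$ and, by assertion (b.ii) of Lemma~\ref{1.2.1.molleta}, the compact set $K_h$ lies in the interior of $\{v\in\R;\bbf(v)=0\}$, the closed sets $\supp\Pi_j$ and $K_h$ are disjoint; being separated by a positive distance, $\Pi_j$ --- together with all of its derivatives up to order $\lf$ --- vanishes on a neighborhood of $K_h$. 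Consequently, once the differentiations in $v$ prescribed by $1\pm(-1)^\lf\frac{\del^\lf}{\del v^\lf}(-\Delta_v)^{\zz/2}$ are carried out through the Leibniz and Faà di Bruno rules, every resulting $v$-integrand is supported in a fixed compact neighborhood of $K_p$, on which the lower bound $\bbf(v)\geq\cc_\d P_M$ of \eqref{1.2.1.alternative} is in force.

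On that region I would invoke Lemma~\ref{1.7.boundlemma}: since $|\Pi_j|$ is bounded on $K_p$, the estimate \eqref{1.7.bound} shows that
\[
\psi\bigg(\frac{\sqrt{\tau^2+|\k|^2}}{\g}\bigg)\,\psi\bigg(\frac{\wt\LL(i\tau,i\k,v)}{\d}\bigg)\,\psi\bigg(\frac{(\wt{R\LL})(i\tau,i\k,v)}{\d}\bigg)\,\frac{\Pi_j(v)\,|P_M\k|^2}{\LL(i\tau,i\k,v)}
\]
is bounded on $(\R\X\R^N\setminus\{0\}\X M)\X K_p$; and, reprising the proof of Lemma~\ref{1.7.boundlemma} in the orthonormal basis $e_0=(1,0)$, $e_\nu=(0,\phi_\nu)$ with each $\phi_\nu$ in $M$ or in $M^\perp$, one checks the hypothesis \eqref{lizorkinhyp} of Theorem~\ref{thm.lizorkin} uniformly for $v\in K_p$ --- the factor $\Pi_j(v)$ being a constant as far as the $(\tau,\k)$-derivatives are concerned. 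Theorem~\ref{thm.lizorkin} then furnishes, exactly as in Step~\#1.2 of the previous lemma, an $L^{q_j}(\R_t\X\R_x^N)$--bound for each fixed $v$ with operator norm dominated by $C_j\big(\sum_{\nu=0}^{\lf}|\eta_{\d,\g}^{(\nu)}(v)|\big)$, with $C_j$ independent of $m,n$ and of $v\in K_p$. In the fractional case $0<\zz<1$ I would run the analogue of Proposition~\ref{1.7.1.prop} with $\sqrt{\tau^2+|\k|^2}$ replaced by $\Pi_j(v)|P_M\k|^2$: the Hölder regularity of $\Pi_j$ together with the representation \eqref{1.7.defLapl} of $(-\Delta_v)^{\zz/2}$ yields an operator norm of order $(1+|v|)^{-1-\zz}$, which is integrable in $v$. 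Integrating in $v$ and invoking the trivial estimate \eqref{1.3.trivialineq} (or the Minkowski inequality in the fractional case) gives the first, almost-sure, assertion of the lemma; multiplying by $\varphi$, taking $L^s$-norms, raising to the $r$-th power, taking expectations, and using \eqref{1.2.limh} then gives \eqref{1.7.2.estlemma2}.

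The only genuinely new ingredient is the support reduction of the first paragraph, and that is also the step I would treat most carefully: it is precisely the vanishing of $\Pi_j$ and of its derivatives of order $\le\lf$ near $K_h$ that makes the hyperbolic regime drop out. Indeed, for $v\in K_h$ one has $\bbf(v)=0$, so $\LL(i\tau,i\k,v)$ is linear in $(\tau,\k)$ whereas the numerator $|P_M\k|^2$ is quadratic, and the corresponding multiplier is genuinely unbounded; without the reduction the argument of the preceding lemma would not transfer. Once this is settled, every remaining estimate is a copy of the corresponding step for $(I)_{m,n}^{(j)}$, so I would merely refer to that argument rather than repeat the Lizorkin verification.
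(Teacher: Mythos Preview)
Your proposal is correct and follows exactly the approach the paper takes: the paper's own proof is a one-line reference to the analysis of $(I)_{m,n}^{(j)}$, noting only that since $\Pi_j(v)=0$ for $v\in K_h$ one needs merely to run the $K_p$ branch. Your write-up simply fills in the details the paper omits---the support reduction via $\supp\Pi_j\subset\supp\bbf$ being disjoint from $K_h$, the invocation of Lemma~\ref{1.7.boundlemma} and Theorem~\ref{thm.lizorkin} on $K_p$, and the fractional analogue of Proposition~\ref{1.7.1.prop}---and does so accurately.
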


\subsubsection{The analysis of $(III)_{m,n}$.}

\begin{lemma} \label{1.7.3.lemma}
	There exists a constant $C$, independent of $m$ and $n \in \N$, such that
	\begin{equation}
	\bbE\Vert(III)\Vert_{L^2(\R_t\X\R_x^N)}^2 \leq C \bbE \int_0^\infty \Vert  \Psi_{m,n}(t) \Vert_{HS(\mathscr H, L^2(\R_x^N\X\R_v))}^2\, dt. \label{1.7.3.estlemma1}
	\end{equation}
	Consequently,
	\begin{equation}
	\lim_{m,n\to \infty} \bbE\Vert\varphi\,(III)\Vert_{L^s(\R_t\X\R_x^N)}^r = 0. \label{1.7.3.estlemma2}
	\end{equation}
\end{lemma}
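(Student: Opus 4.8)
The plan is to combine the Plancherel theorem in the $(t,x)$-variables with the It\^o isometry; because we are working in $L^2$, no multiplier theorem is needed, an $L^2$-Fourier multiplier being nothing but an essentially bounded function. I would start from the defining formula of $(III)_{m,n}$ (see \eqref{1.7.decompvf4} and the displays following it) and write its symbol as
\[
\mathfrak b_v(\tau,\k)=\psi\!\Big(\tfrac{\sqrt{\tau^2+|\k|^2}}{\g}\Big)\Big(1\pm(-1)^\lf\tfrac{\del^\lf}{\del v^\lf}(-\Delta_v)^{\zz/2}\Big)\Big[\eta_{\d,\g}(v)\,\psi\!\Big(\tfrac{\wt\LL(i\tau,i\k,v)}{\d}\Big)\,\psi\!\Big(\tfrac{\sqrt{\tau^2+|\k|^2}}{\d}\Big)\,\tfrac{(|\k|^2+1)^{1/4}}{\LL(i\tau,i\k,v)}\Big],
\]
so that $\FF_{t,x}(III)_{m,n}(\tau,\k)=\int_{\R_v}\mathfrak b_v(\tau,\k)\,\FF_{t,x}\big(\Psi_{m,n}\tfrac{dW}{dt}\big)(\tau,\k,v)\,dv$, where $\FF_{t,x}(\Psi_{m,n}\tfrac{dW}{dt})(\tau,\k,v)=\sum_{k\geq1}\int_0^\infty e^{-it\tau}\,\FF_x\big(\Psi_{m,n}(t)e_k\big)(\k,v)\,d\b_k(t)$ is, for each fixed $(\tau,\k,v)$, a genuine Wiener integral (here $|e^{-it\tau}|=1$ and $\Psi_{m,n}(\cdot)e_k\in L^2([0,\infty);L^2(\R_x^N\X\R_v))$ almost surely — this is the interpretation of the stochastic term already used in \eqref{1.eqftil}; cf.\ Remark \ref{1.remark} and Proposition \ref{1.7.prop}).

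By Plancherel in $(t,x)$, then the It\^o isometry for each fixed $(\tau,\k)$, and finally the Cauchy--Schwarz inequality in $v$, one gets
\begin{align*}
\bbE\|(III)_{m,n}\|_{L^2(\R_t\X\R_x^N)}^2&=\int_{\R_\tau}\int_{\R_\k^N}\bbE\Big|\int_{\R_v}\mathfrak b_v(\tau,\k)\,\FF_{t,x}\big(\Psi_{m,n}\tfrac{dW}{dt}\big)(\tau,\k,v)\,dv\Big|^2 d\k\,d\tau\\
&\leq\int_{\R_\k^N}\Big(\int_{\R_\tau}\int_{\R_v}|\mathfrak b_v(\tau,\k)|^2\,dv\,d\tau\Big)\,\Xi_{m,n}(\k)\,d\k,
\end{align*}
where $\Xi_{m,n}(\k)=\bbE\sum_{k\geq1}\int_0^\infty\|\FF_x(\Psi_{m,n}(t)e_k)(\k,\cdot)\|_{L^2(\R_v)}^2\,dt$ is, crucially, independent of $\tau$. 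Since Plancherel in $x$ gives $\int_{\R_\k^N}\Xi_{m,n}(\k)\,d\k=\bbE\int_0^\infty\|\Psi_{m,n}(t)\|_{HS(\mathscr H;L^2(\R_x^N\X\R_v))}^2\,dt$, the estimate \eqref{1.7.3.estlemma1} follows once I bound $\sup_{\k\in\R^N}\int_{\R_\tau}\int_{\R_v}|\mathfrak b_v(\tau,\k)|^2\,dv\,d\tau$ by a constant depending only on $\d$ and $\g$.

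This last bound is the heart of the proof, and is obtained exactly through the dichotomy $\supp\eta_{\d,\g}=K_h\cup K_p$ of Lemma \ref{1.2.1.molleta} already exploited for $(I)_{m,n}^{(j)}$. On the joint support of the frequency cut-offs one has $\rho:=\sqrt{\tau^2+|\k|^2}\geq\tfrac12\max(\g,\d)$, while $|\wt\LL(i\tau,i\k,v)|\geq\d/2$ forces both $|\LL(i\tau,i\k,v)|\geq c\,\d\,\rho\min(1,\rho)$ and $|\LL(i\tau,i\k,v)|\geq|\tau+\abf(v)\cdot\k|$; together with $(|\k|^2+1)^{1/4}\leq C(1+\rho^{1/2})$ this makes the bracket defining $\mathfrak b_v$ bounded (prior to $v$-differentiation) and, once the factor $(|\k|^2+1)^{1/4}$ is absorbed, decaying like $\rho^{-1}$ in $\tau$, which is what provides the $\tau$-integrability. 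The $v$-derivatives are controlled as for $(I)_{m,n}^{(j)}$: on $K_h$, the identity $\rho/\LL(i\tau,i\k,v)=1/\wt\LL(i\tau,i\k,v)$ exhibits this factor as homogeneous of degree $0$, hence with bounded $v$-derivatives; on $K_p$ one has the lower bound $|\LL(i\tau,i\k,v)|\gtrsim\rho+|P_M\k|^2$ (cf.\ Lemma \ref{1.7.boundlemma}) together with $|\del_v^m\LL(i\tau,i\k,v)|\lesssim\rho+|P_M\k|^2$ — the latter because $\bbf^{(m)}(v)=P_M\bbf^{(m)}(v)P_M$ in a neighbourhood of $K_p$ — whence $|\del_v^\lf(1/\LL(i\tau,i\k,v))|\lesssim 1/|\LL(i\tau,i\k,v)|$; and the nonlocal part $(-\Delta_v)^{\zz/2}$, acting on the $v$-compactly supported function $\eta_{\d,\g}(v)\cdot(\text{the rest})$, is treated verbatim as in Proposition \ref{1.7.1.prop}, producing the decay $(1+|v|)^{-1-\zz}$ that makes the symbol square-integrable in $v$. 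Retaining $|P_M\k|^2$ in the denominators and using $\int_{\R_\tau}(\rho+|P_M\k|^2)^{-2}\,d\tau\lesssim(\max(|\k|,|P_M\k|^2))^{-1}$ (plus the cut-offs near $\rho=0$) then yields the desired uniform-in-$\k$ bound. The main obstacle is precisely this estimate, where the slow $\rho^{-1}$ decay of $1/\LL$ has to be balanced against the possible growth in $\k$ of its $v$-derivatives — it is the $K_h/K_p$ split, the homogeneity on $K_h$ and the bound of Lemma \ref{1.7.boundlemma} on $K_p$ that make it go through.

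Finally, \eqref{1.7.3.estlemma2} is immediate: since $\varphi\in(L^1\cap L^\infty)(\R_t\X\R_x^N)$ and $1\leq s\leq2$, H\"older's inequality (as in Lemma \ref{1.3.lemma}) gives $\|\varphi\,(III)_{m,n}\|_{L^s(\R_t\X\R_x^N)}\leq\|\varphi\|_{L^1_{t,x}\cap L^\infty_{t,x}}\|(III)_{m,n}\|_{L^2(\R_t\X\R_x^N)}$, while $r\leq2$ gives $\bbE\|\cdot\|^r\leq(\bbE\|\cdot\|^2)^{r/2}$; combining these with \eqref{1.7.3.estlemma1} — in which $\d,\g$ stay fixed as $m,n\to\infty$ — and the hypothesis \eqref{1.2.limPhi} completes the argument.
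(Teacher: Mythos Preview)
Your approach is essentially the same as the paper's: both arguments combine Plancherel in $(t,x)$, Cauchy--Schwarz in $v$, and the It\^o isometry, and both reduce the estimate to a pointwise bound on the symbol obtained via the $K_h/K_p$ dichotomy of Lemma~\ref{1.2.1.molleta} together with Lemma~\ref{1.7.boundlemma} and the $(1+|v|)^{-1-\zz}$ decay of Proposition~\ref{1.7.1.prop}. The paper simply records the resulting pointwise bound $|m(\tau,\k,v)|\leq C\,(1+|v|)^{-1-\zz}\sqrt{1+|\k|}/\sqrt{\tau^2+|\k|^2}$ and then integrates explicitly in $\tau$ (producing the function $\zeta_\g(\k)$ with $(1+|\k|)\zeta_\g(\k)$ bounded), whereas you retain $|P_M\k|^2$ in the denominator on $K_p$ before integrating; this is sharper but unnecessary here, and leads to the same conclusion.

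One minor point: the factor $\psi\big(\sqrt{\tau^2+|\k|^2}/\d\big)$ you copied into $\mathfrak b_v$ is a typo inherited from the paper's display for $(III)_{m,n}$; it should be $\psi\big((\wt{R\LL})(i\tau,i\k,v)/\d\big)$, consistent with the decomposition \eqref{1.2.2.decompg}. You implicitly use the correct cut-off anyway when you invoke Lemma~\ref{1.7.boundlemma}, so this does not affect your argument.
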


Before presenting the proof of this lemma, let us explicate and explore the meaning of the expression $\Psi_{m,n} \frac{dW}{dt}$ and its the Fourier transform. As \eqref{1.eqftil} suggests, $\Psi_{m,n} \frac{dW}{dt}$ is defined as the linear functional
\begin{equation}
\phi \in \Ss(\R_t\X\R_x^N\X\R_v) \mapsto \int_0^\infty \int_{\R_x^N} \int_{\R_v} \phi(t,x,v) \Psi_{m,n}(t,x,v) \,dvdxdW(t).  \label{1.7.3.def}
\end{equation}

\begin{proposition}
	$\Psi_{m,n} \frac{dW}{dt}$, given by formula \eqref{1.7.3.def}, is almost surely a tempered distribution in $\R_t\X\R_x^N\X\R_x^N$. Furthermore, its spatio-temporal Fourier transform $\FF_{t, x}(\Psi_{m,n} \frac{dW}{dt})$ is, almost surely, formally given by
	\begin{align}
	\FF_{t,x} \bigg(\Psi_{m,n} \frac{dW}{dt}\bigg)(\tau, \k, v) = \frac{1}{\sqrt{2\pi}} \int_0^\infty e^{-it\tau} (\FF_x \Psi_{m,n}) (t, \k, v)\, dW(t); \label{1.7.3.ftransf}
	\end{align}
	that is, for any $\phi \in \Ss(\R_\tau\X\R_\k^N\X\R_v)$ and almost surely,
	\begin{align}
	\bigg\langle \FF_{t,x} &\left(\Psi_{m,n} \frac{dW}{dt}\right), \phi \bigg\rangle_{\Ss', \Ss} \nonumber \\ &= \frac{1}{\sqrt{2\pi}} \int_{\R_\tau}   \int_0^\infty \int_{\R_\k^N} \int_{\R_v} e^{-it\tau} (\FF_x \Psi_{m,n})(\tau,\k,v) \phi(\tau,\k,v)\, dvd\k dW(t)d\tau. \label{1.7.3.ftransf'}
	\end{align}
\end{proposition}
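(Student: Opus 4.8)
The plan is to prove the two assertions in turn — first that $\Psi_{m,n}\frac{dW}{dt}$ is almost surely an element of $\Ss'(\R_t\X\R_x^N\X\R_v)$, then that its partial Fourier transform is the function \eqref{1.7.3.ftransf}. Write $\Psi_{m,n}^k(t):=\Psi_{m,n}(t)e_k\in L^2(\R_x^N\X\R_v)$, so each $\Psi_{m,n}^k$ is predictable and $\sum_k\Vert\Psi_{m,n}^k(t)\Vert_{L^2(\R_x^N\X\R_v)}^2=\Vert\Psi_{m,n}(t)\Vert_{HS(\mathscr H;L^2(\R_x^N\X\R_v))}^2$, the latter being $(\om,t)$-integrable by hypothesis (cf.\ \eqref{1.2.limPhi}). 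For $\phi\in\Ss(\R_t\X\R_x^N\X\R_v)$ the right-hand side of \eqref{1.7.3.def} is, by definition of the integral against $W(t)=\sum_k\b_k(t)e_k$, the series $\sum_k\int_0^\infty\langle\phi(t,\cdot,\cdot),\Psi_{m,n}^k(t)\rangle_{L^2(\R_x^N\X\R_v)}\,d\b_k(t)$; each summand is a genuine It\^o integral, and summing the It\^o isometries together with the Cauchy--Schwarz inequality in $(x,v)$ gives
\[
\bbE\Big|\big\langle\Psi_{m,n}\tfrac{dW}{dt},\phi\big\rangle\Big|^2=\bbE\int_0^\infty\sum_k\big|\langle\phi(t,\cdot,\cdot),\Psi_{m,n}^k(t)\rangle\big|^2\,dt\le\Big(\sup_{t\in\R}\Vert\phi(t,\cdot,\cdot)\Vert_{L^2(\R_x^N\X\R_v)}\Big)^2\bbE\int_0^\infty\Vert\Psi_{m,n}(t)\Vert_{HS}^2\,dt,
\]
the first factor being a continuous seminorm on $\Ss$. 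Thus $\phi\mapsto\langle\Psi_{m,n}\frac{dW}{dt},\phi\rangle$ is linear and continuous from $\Ss$ into $L^2(\Om)$, and, since $\Ss$ is a separable nuclear Fr\'echet space, the standard regularization theorem for generalized random fields furnishes a single null set off which this map is a continuous linear functional on $\Ss$ — concretely, one expands $\phi=\sum_\a c_\a(\phi)\hbf_\a$ in Hermite functions, uses the bound above to show $\bbE\sum_\a(1+|\a|)^{-2\kappa}\big|\langle\Psi_{m,n}\frac{dW}{dt},\hbf_\a\rangle\big|^2<\infty$ for $\kappa$ large, and checks that the resulting functional agrees almost surely with \eqref{1.7.3.def}. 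The same estimate, after the unitary substitution performed by $\FF_x$ and using $|e^{-it\tau}|=1$, shows that the scalar It\^o integral in \eqref{1.7.3.ftransf} is well-defined for Lebesgue-a.e.\ $(\tau,\k,v)$.

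For the Fourier-transform identity I would start from $\langle\FF_{t,x}(\Psi_{m,n}\frac{dW}{dt}),\phi\rangle=\langle\Psi_{m,n}\frac{dW}{dt},\FF_{t,x}\phi\rangle$, insert $\FF_{t,x}\phi$ into \eqref{1.7.3.def}, and, for each fixed $t$ and $\om$, apply Fubini together with the multiplication formula for the Fourier transform in the $x$-variables ($\int(\FF_xF)\,G=\int F\,(\FF_xG)$, writing $\FF_{t,x}=\FF_t\circ\FF_x$) to rewrite the $dv\,dx$-integral as $\frac{1}{\sqrt{2\pi}}\int e^{-it\tau}\phi(\tau,\k,v)(\FF_x\Psi_{m,n})(t,\k,v)\,dv\,d\k\,d\tau$. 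It then remains only to commute the Lebesgue integrals in $(\tau,\k,v)$ past the It\^o integral in $t$, which I would justify by the stochastic Fubini theorem; its hypothesis
\[
\int_{\R_\tau}\!\int_{\R_\k^N}\!\int_{\R_v}|\phi(\tau,\k,v)|\,\Big(\bbE\int_0^\infty\sum_k|(\FF_x\Psi_{m,n}^k)(t,\k,v)|^2\,dt\Big)^{1/2}\,dv\,d\k\,d\tau<\infty
\]
follows from Cauchy--Schwarz in $(\k,v)$, the Plancherel theorem in $x$ (which collapses the $L^1_{\k,v}$-norm of the bracket to $\bbE\int_0^\infty\Vert\Psi_{m,n}(t)\Vert_{HS}^2\,dt$) and the rapid decay of $\phi$ in $\tau$. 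Carrying out the interchange gives exactly \eqref{1.7.3.ftransf'}, and reading off the inner stochastic integral identifies $\FF_{t,x}(\Psi_{m,n}\frac{dW}{dt})$ with \eqref{1.7.3.ftransf}.

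I expect the genuine difficulties to lie in two places: the passage from mean-square continuity of $\phi\mapsto\langle\Psi_{m,n}\frac{dW}{dt},\phi\rangle$ to an honest $\Ss'$-valued random variable (the nuclearity / Hermite-expansion step merging uncountably many null sets into one), and the verification of the stochastic Fubini hypothesis above; everything else — expanding $W$ in $(e_k)$, the It\^o isometry, and the bookkeeping with Fourier conventions and the $(2\pi)^{-1/2}$ factors — is routine.
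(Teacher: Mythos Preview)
Your argument is correct, and for the Fourier-transform identity it coincides with the paper's: both start from the duality relation, push $\FF_x$ onto $\Psi_{m,n}$ by Plancherel, and then invoke the stochastic Fubini theorem to commute the $d\tau$-integral with $dW(t)$.

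Where you differ is in the first assertion. You establish mean-square continuity of $\phi\mapsto\langle\Psi_{m,n}\tfrac{dW}{dt},\phi\rangle$ and then appeal to the nuclearity of $\Ss$ (via a Hermite expansion) to produce a single $\Ss'$-valued version. The paper instead exhibits $\Psi_{m,n}\tfrac{dW}{dt}$ concretely as the distributional time derivative of the process $(t,x,v)\mapsto\int_0^t\Psi_{m,n}(t',x,v)\,dW(t')$: one writes $\phi(t,x,v)=-\int_t^\infty\partial_{t'}\phi\,dt'$, applies the stochastic Fubini theorem to swap $dW(t)$ with $dt'$, and observes via the Burkholder inequality that $\int_0^\cdot\Psi_{m,n}\,dW$ lies almost surely in $L^\infty_t L^2_{x,v}$, hence defines a tempered distribution outright. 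This route is more elementary---no nuclearity, no countable basis, no merging of null sets---and it yields the intuitive identity $\Psi_{m,n}\tfrac{dW}{dt}=\tfrac{\del}{\del t}\big(\int_0^t\Psi_{m,n}\,dW\big)$ as a byproduct. Your approach, on the other hand, is the canonical one from the theory of generalized random fields and would transfer unchanged to settings where no such primitive is available.
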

\begin{proof}
	Pick $\phi \in \Ss(\R_t\X\R_x^N\X\R_v)$. For the Burkholder inequality asserts that
	\begin{align}
	\bbE \sup_{t > 0}\bigg\Vert \int_0^t \Psi_{m,n}(t')&\,dW(t') \bigg\Vert_{L^2(\R_x^N\X\R_v)}^2 \nonumber \\&\leq C \bbE \int_0^\infty \big\Vert \Psi_{m,n}(t') \big\Vert_{HS(\mathscr H, L^2(\R_x^N\X\R_v))}^2\, dt' < \infty, \label{1.7.burkholder}
	\end{align}
	one may combine the stochastic Fubini theorem (see, e.g., \textsc{P. E. Protter} \cite{P}) with the usual formula $\phi(t,x,v) = - \int_t^\infty \frac{\del \phi}{\del t'}(t',x,v)\,dt'$ to translate the right-hand side of \eqref{1.7.3.def} into
	\begin{equation}
	- \int_0^\infty \bigg( \int_{\R_x^N} \int_{\R_v} \frac{\del \phi}{\del t}(t',x,v) \bigg[\int_0^t \Psi_{m,n}(t,x,v) \,d W(t)\bigg] \,dvdx\bigg)dt'. \label{1.7.3.def'}
	\end{equation}
	Thus, thanks to \eqref{1.7.burkholder} again, it is not difficult to argue from the formula \eqref{1.7.3.def'} that indeed $\Psi_{m,n}\frac{dW}{dt}$ defines almost surely a tempered distribution. (In other words, we have shown the ``intuitive'' relation $(\Psi_{m,n}\frac{dW}{dt}) = \frac{\del}{\del t} (\int_0^t \Psi_{m,n}\, dW)$).
	
	Let us now establish \eqref{1.7.3.ftransf}. Via the stochastic Fubini theorem once more, it may be shown that
	\begin{align*}
	\bigg\langle \Psi_{m,n} \frac{dW}{dt} , \FF_{t, x} \phi \bigg\rangle  &= \int_{0}^\infty \int_{\R_x^N} \int_{\R_v}  (\FF_{t, x}\phi) \Psi_{m,n}\, dvdxdW(t) \\
	&= \int_{0}^\infty \int_{\R_x^N} \int_{\R_v}   (\FF_{ t}\phi) ( \FF_x\Psi_{m,n} )\, dvd\k dW(t) \\
	&= \frac{1}{\sqrt {2\pi}}\int_{\R_\tau} \bigg( \int_{\R_x^N} \int_{\R_v} \bigg[\int_{0}^\infty e^{-i t \tau} \big(\FF_{x} \Psi_{m,n}\big) \, dW(t) \bigg]\phi \, dv dx \bigg)d\tau,
	\end{align*}
	hence \eqref{1.7.3.ftransf'}.
\end{proof}

\begin{proof}[Proof of Lemma \ref{1.7.3.lemma}]
	On the strength of the previous proposition, we deduce that
	\begin{align}
	(III)_{m,n} &=\FF_{t,x}^{-1} \bigg\{ \int_\R m(\tau, \k, v) \bigg( \int_0^\infty e^{-it\tau} (\FF_{x}\Psi_{m,n})(t,\k, v) \, dW(t) \bigg)  \, dv \bigg\},  \label{1.7.4.III'}
	\end{align}
	where $m : \R_\tau \X \R_\k^N \X \R_v \to \C$ is given by
	\begin{align}
	m(\tau, \k, v) = \psi\bigg(\frac{\sqrt{\tau^2 + |\k|^2}}{\g}\bigg) \bigg(&1 \pm (-1)^\lf \frac{\del^\lf}{\del v^\lf}(-\Delta_v)^{\zz/2}\bigg) \bigg[ \eta_{\d,\g}(v)  \psi \bigg(\frac{\wt\LL(i\tau, i\k, v)}{\d}\bigg) \nonumber\\& \psi \bigg(\frac{\wt{R\LL}(i\tau, i\k, v)}{\d}\bigg) \psi \bigg( \frac{\sqrt{\tau^2 + |\k|^2}}{\g}\bigg) \frac{\left(|\k|^2 + 1\right)^{1/4}}{\LL(i\tau, i\k, v)}    \bigg] \label{1.7.4.defm}
	\end{align}
	(a formal fashion to prove \eqref{1.7.4.III'} can be found in \textsc{B. Gess--M. Hofmanová} \cite{GH}). Notice that, mingling the bound \eqref{1.7.bound} of Lemma \ref{1.7.boundlemma} and reasoning of Proposition \ref{1.7.1.prop}, it is not difficult to corroborate the existence of a constant $C = C_{\d,\g} > 0$ such that
	\begin{align}
	|m(\tau, \k, v)| \leq  C \frac{ 1_{(\frac{\g}{2}, \infty)}(\sqrt{\tau^2 + |\k|^2}) }{(1+|v|)^{1 + \zz}} \frac{\sqrt{1 + |\k|}}{\sqrt{\tau^2 + |\k|^2}} \label{1.7.3.bound}
	\end{align}
	for all $(\tau, \k, v) \in \left(\R\X\R^N \setminus \{ 0 \}  \X M\right) \X\R$. Hence, a joint application of the Plancherel formula, the Cauchy--Schwarz inequality, \eqref{1.7.3.bound}, the Fubini theorem, and the Itô isometry to \eqref{1.7.4.III'} yields
	\begin{align}
	\bbE \Vert (III&)_{m,n} \Vert_{L^2(\R_t\X\R_x^N)}^2 \nonumber\\ &= \bbE \int_{\R_\tau} \int_{\R_\k^N}  \bigg|\int_{\R} m(\tau, \k, v) \bigg( \int_0^\infty e^{-it\tau} (\FF_{x}\Psi_{m,n})(t,\k, v) \, dW(t) \bigg)  \, dv  \bigg|^2\,d\k d\tau \nonumber \\
	&\leq \bbE \int_{\R_\tau} \int_{\R_\k^N}  \bigg(\int_{\R} |m(\tau, \k, w)|^2 \, dw \bigg) \nonumber\\ &\quad\quad\quad\quad\quad\quad\quad\quad \bigg( \int_{\R} \bigg|  \int_0^\infty e^{-it\tau} (\FF_{x}\Psi_{m,n})(t,\k, v) \, dW(t) \bigg|^2\ dv \bigg)  \,   d\k d\tau \nonumber\\
	&\leq C \int_0^\infty \int_{\sqrt{\tau^2 + |\k|^2}\geq \frac{\g}{2}} \int_{\R_v}  \frac{1 + |\k|}{\tau^2 + |\k|^2} \Vert  (\FF_{x}\Psi_{m,n})(t,\k,v)\Vert_{HS(\mathscr H; \R)}^2 \,dvd\k d\tau dt  \label{1.7.bound'},
	\end{align}
	where we introduced the notation $$\Vert (\FF_x\Psi_{m,n})(t,\k,v)\Vert_{HS(\mathscr H; \R)}^2 = \text{trace of } \Big\{(\FF_x\Psi_{m,n})(t, \k, v)^\star(\FF_x\Psi_{m,n})(t, \k, v) \Big\},$$
	which, by assumption, lies in $L_\om^1L_{t,\k,v}^1$. Integrating \eqref{1.7.bound'} firstly in the $\tau$-variable, we obtain that
	\begin{align*}
	\bbE \Vert (III)_{m,n} &\Vert_{L^2(\R_t\X\R_x^N)}^2 \\ &\leq C \int_0^\infty \int_{\R_\k^N} \int_{\R_v} (1 + |\k|) \zeta_\g(\k) \Vert  (\FF_{x}\Psi_{m,n})(t,\k,v)\Vert_{HS(\mathscr H; \R)}^2 \,dvd\k d\tau dt,   
	\end{align*}
	with the function $\zeta_\g : \R_\k^N \to \R$ being defined as
	$$ \zeta_\g(\k) \stackrel{\text{def}}{=} \int_{\sqrt{\tau^2 + |\k|^2} \geq \frac{\g}{2}} \frac{1}{\tau^2 + |\k|^2} \, d\tau = 
	\begin{cases}
	\frac{4}{\g} &\text{for } |\k| = 0, \\
	\frac{\pi - 2\arctan\sqrt{\frac{\g^2}{4|\k|^2} -1}}{|\k|} &\text{for } 0 < |\k| < \frac{\g}{2}, \text{ and} \\
	\frac{\pi}{|\k|}  &\text{for } |\k| \geq \frac{\g}{2}.
	\end{cases}$$
	Due to the boundedness of $(1 + |\k|)\zeta_\g(\k)$, \eqref{1.7.3.estlemma1} is thus verified. Finally, \eqref{1.7.3.estlemma2} follows from \eqref{1.2.limPhi}.
\end{proof}

	\subsubsection{The conclusion of the analysis of $\vf_{m,n}^{(4)}$.} Recalling the decomposition \eqref{1.7.decompvf4}, the limits \eqref{1.7.1.estlemma2}, \eqref{1.7.2.estlemma2}, and \eqref{1.7.3.estlemma2} affirm the next proposition.
\begin{lemma}
	It holds the limit
	\begin{equation}
	\lim_{m,n\to \infty} \bbE \Vert \varphi \vf_{m,n}^{(4)} \Vert_{L^s(\R_t\X\R_x^N)}^r = 0. \label{1.7.estvf4}
	\end{equation}
\end{lemma}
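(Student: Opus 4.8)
The plan is to simply assemble the three estimates already in hand for the constituents of $\vf_{m,n}^{(4)}$. Recall the subdivision \eqref{1.7.decompvf4}, which writes $\vf_{m,n}^{(4)}$ as $\sum_{j\in\JJ}(I)_{m,n}^{(j)}+\sum_{j\in\JJ}(II)_{m,n}^{(j)}+(III)_{m,n}$. First I would multiply through by $\varphi$ and apply, for each fixed $\omega\in\Om$, the triangle inequality in $L^s(\R_t\X\R_x^N)$; then I would take $L^r(\Om)$-norms of both sides and use Minkowski's inequality in $L^r(\Om)$ (legitimate since $r\geq 1$). This yields
\[
\Bigl(\bbE\Vert\varphi\vf_{m,n}^{(4)}\Vert_{L^s(\R_t\X\R_x^N)}^r\Bigr)^{1/r}
\leq \Bigl(\bbE\Big\Vert\varphi\sum_{j\in\JJ}(I)_{m,n}^{(j)}\Big\Vert_{L^s}^r\Bigr)^{1/r}
+ \Bigl(\bbE\Big\Vert\varphi\sum_{j\in\JJ}(II)_{m,n}^{(j)}\Big\Vert_{L^s}^r\Bigr)^{1/r}
+ \Bigl(\bbE\Vert\varphi\,(III)_{m,n}\Vert_{L^s}^r\Bigr)^{1/r}.
\]
Then I would invoke \eqref{1.7.1.estlemma2}, \eqref{1.7.2.estlemma2}, and \eqref{1.7.3.estlemma2}, which state precisely that each of the three terms on the right converges to $0$ as $m,n\to\infty$; hence their sum does as well, and \eqref{1.7.estvf4} follows.

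The one point worth flagging is that throughout this step the parameters $\d$ and $\g$ are held fixed, so the blow-up of the constants in the lemmas for $(I)_{m,n}^{(j)}$, $(II)_{m,n}^{(j)}$, and $(III)_{m,n}$ as $\d,\g\to 0_+$ is immaterial here; the passage $\d,\g\to 0_+$ is carried out only afterwards, once the bounds for $\vf_{m,n}^{(0)},\dots,\vf_{m,n}^{(4)}$ are combined into the final limiting argument. Consequently I anticipate no genuine obstacle: the substantive work was already done in the analysis of the individual terms $(I)_{m,n}^{(j)}$, $(II)_{m,n}^{(j)}$, and $(III)_{m,n}$ — where the mollification $\eta_{\d,\g}$ of Lemma \ref{1.2.1.molleta}, Lizorkin's multiplier theorem, the decay estimate of Proposition \ref{1.7.1.prop}, and the Burkholder and It\^o tools were deployed — and the present lemma is a purely organizational corollary of those results.
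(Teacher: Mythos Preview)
Your proposal is correct and follows exactly the paper's approach: the paper's own proof is a single sentence stating that the decomposition \eqref{1.7.decompvf4} together with the limits \eqref{1.7.1.estlemma2}, \eqref{1.7.2.estlemma2}, and \eqref{1.7.3.estlemma2} yields the result. You have merely made explicit the triangle and Minkowski inequalities underlying that sentence, and correctly noted that $\d,\g$ are held fixed at this stage.
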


\subsection{The conclusion of the proof of Theorem \ref{1.firstthm}.} Returning to \eqref{1.1.decompv}, the merger of the estimates \eqref{1.3.estvf0}, \eqref{1.4.estvf1}, \eqref{1.5.estvf2}, \eqref{1.6.estvf3} and \eqref{1.7.estvf4} results in
\begin{align*}
\limsup_{m,n \to \infty} \bbE &\bigg\Vert \varphi \int_{\R_v} (f_m - f_n)\eta \, dv \bigg\Vert_{L^s(\R_t\X\R_x^N)}^r \leq C \Vert \eta_{\d, \g} - \eta \Vert_{L^{p'}(\R)}^r + C\g^\qq  \\
&+ C \Vert \eta_{\d,\g}\Vert_{L^\infty(\R)}^{r} \Bigg[\bigg( \sup_{\tau^2 + |\k|^2 = 1} \meas\Big\{ v \in \supp \eta_{\d, \g};  |\LL(i\tau, i\k, v)| \leq \d \Big\}\bigg)^{r\mathfrak p} \\
& \quad\quad\quad\quad\quad+\bigg( \sup_{\substack{(\tau, \k) \in \R\X M^\perp\\\tau^2 + |\k|^2 = 1}} \meas\Big\{ v \in \supp \eta_{\d, \g};  |\LL(i\tau, i\k, v)| \leq \d \Big\}\bigg)^{r\mathfrak \rr} \Bigg] ,
\end{align*}
where the positive constants $C$, $\pp$, $\qq$, and $\rr$ do not depend on the integers $m$ and $n$, nor on $0 < \d$ and $\g < 1$. Letting first $\d \to 0_+$, Lemmas \ref{1.2.1.molleta}, \ref{1.5.lemma2} and \ref{1.6.lemma} imply that
$$\limsup_{m,n \to \infty} \bbE \bigg\Vert \varphi \int_{\R_v} (f_m - f_n)\eta \, dv \bigg\Vert_{L^s(\R_t\X\R_x^N)}^r \leq C \Vert \nn_\g - \eta \Vert_{L^{p'}(\R)}^r + C\g^\qq.$$
Finally, passing $\g \to 0_+$ and applying Lemma \ref{1.2.1.molleta} one last time, we conclude
$$\lim_{m,n \to \infty} \bbE \bigg\Vert \varphi \int_{\R_v} (f_m - f_n)\eta \, dv \bigg\Vert_{L^s(\R_t\X\R_x^N)}^r = 0.$$
Therefore, the sequence of the averages $(\varphi \int_{\R_v} f_n \eta \, dv )$ is Cauchy on the Banach space $L^r(\Om; L^s(\R_t\X\R_x^N))$. Theorem \ref{1.firstthm} is hereby demonstrated. \qed

\section{Proof of Theorem \ref{1.secondthm}} \label{Sec.Proof2}
We will reduce  Theorem \ref{1.secondthm} to a corollary of Theorem \ref{1.firstthm}. Let $\theta \in \Cc_c^\infty(Q)$ be arbitrary, and consider also some $\vartheta \in \Cc_c^\infty(\R_v)$, such that
$$\begin{dcases}
0 \leq \vartheta \leq 1 \text{ everyhwhere, and} \\
\vartheta \equiv 1 \text{ in $\supp \eta + (-1,1)$}.
\end{dcases}$$
Put $\wt f_n(t,x,v) = \theta(t,x) \vartheta(v)^2 f_n(t,x,v)$. Hence, conserving the notation $\LL(i\tau, i\k, v) = i(\tau + \abf(v)\cdot \k) + \k \cdot \bbf(v)\k$, each $\wt f_n$ obeys the equation
\begin{align}
\frac{\del \wt f_n}{\del t} + \abf(v) \cdot \nabla_x \wt f_n - \bbf(v) : D_x^2 \wt f_n =  f_n&  \LL\bigg(\frac{\del}{\del t}, \nabla_x, v \bigg) (\theta\vartheta^2) + 2   \div_x  (f_n \bbf)  \cdot \nabla_x (\theta \vartheta^2)  \nonumber \\ &+ \sum_{j \in \JJ}\theta \vartheta^2 ( - \Delta_{t,x} + 1)^{1/2}	(- \Delta_v + 1)^{\ell/2}g_{j,n} \nonumber\\ &+ \sum_{j \in \JJ} \theta \vartheta^2 (\Pi_j(v) \Delta_M)	(- \Delta_v + 1)^{\ell/2} h_{j,n} \nonumber \\ &+ \theta \vartheta^2  ( - \Delta_{t,x} + 1)^{1/4} (- \Delta_v + 1)^{\ell/2} \Phi_n \frac{dW}{dt} \label{1.7.eqwtf}
\end{align}
almost surely in the sense of the distributions in $\R_t\X\R_x^N\X\R_v$.

\begin{lemma} \label{1.7.lemma}
	The equation \eqref{1.7.eqwtf} may be written as
	\begin{align}
	\frac{\del \wt f_n}{\del t} &+ \abf(v) \cdot \nabla_x \wt f_n - \bbf(v) : D_x^2 \wt f_n = \sum_{j \in \wt {\mathscr J}}( - \Delta_{t,x} + 1)^{1/2} (- \Delta_v + 1)^{\ell/2}  \wt g_{j,n} \nonumber \\ & -\sum_{j\in \wt {\mathscr J}}(\wt\Pi_j(v) \Delta_M)(- \Delta_v + 1)^{\ell/2} \wt h_{j,n} + ( - \Delta_{x} + 1)^{1/4} (- \Delta_v + 1)^{\ell/2} \wt\Phi_n \frac{dW}{dt}, \label{1.7.eqf}
	\end{align}
	where $\wt{ \mathscr J}$ is a finite index set such that, for any $j \in \wt{\mathscr J}$, 
	\begin{enumerate}
		\item $s \leq q_j < \infty$,
		\item  $(\wt g_{j,n})_{n \in \N}$ and $(\wt h_{j,n})_{n \in \N}$ are relatively compact sequences in $L^r(\Omega; L^{q_j}(\R_t\X\R_x^N\X\R_v))$,
		\item  $\wt {\Pi_j} \in \Cc_\loc^{k, \a}(\R)$ is such that $\supp \wt{\Pi_j} \subset \supp \bbf$, and
		\item  $(\wt {\Phi_n})_{n \in \N}$ is a predictable and relatively compact sequence in $L^2(\Omega\X [0,\infty)_t;$ $HS(\mathscr H;$ $L^2(\R_x^N \X \R_v)))$.
	\end{enumerate}
\end{lemma}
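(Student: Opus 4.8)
The plan is to recast the right-hand side of \eqref{1.7.eqwtf} into the canonical shape \eqref{1.7.eqf}, which comes down to two operations: commuting the localising factor $\theta\vartheta^2$ through the Fourier multipliers already present, and disposing of the two extra source terms $f_n\LL(\tfrac{\del}{\del t},\nabla_x,v)(\theta\vartheta^2)$ and $\div_x(f_n\bbf)\cdot\nabla_x(\theta\vartheta^2)$ — the only terms of \eqref{1.7.eqwtf} built from the merely bounded sequence $(f_n)_{n\in\N}$ — by absorbing them into a finite enlargement $\wt{\mathscr J}$ of the index set and the associated new source functions $\wt g_{j,n}$. Two elementary facts are used throughout: a bounded linear operator between Banach spaces maps a relatively compact sequence to a relatively compact one (pass to convergent subsequences); and, by the standard symbolic calculus, multiplication by a $\Cc^\infty_c$ function of $(t,x)$ — resp. by a $\Cc^{k,\a}_\loc$ function of $v$ with $(k,\a)$ as in \eqref{1.condka} — composes with the Bessel potentials $(-\Delta_{t,x}+1)^{\pm1/2}$ — resp. $(-\Delta_v+1)^{\pm\ell/2}$ — into operators bounded on every $L^q$, $1<q<\infty$; this is the calculus that already underlies the well-posedness of the terms of \eqref{1.eqftil} mentioned after Remark~\ref{1.remark}.

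For the block consisting of $\theta\vartheta^2$ times the right-hand side of \eqref{1.eqf}, I would commute $M_{\theta\vartheta^2}$ inside each multiplier. Since $\theta=\theta(t,x)$ and $\vartheta=\vartheta(v)$ act on disjoint sets of variables, the commutator of $M_{\theta\vartheta^2}$ with $(-\Delta_{t,x}+1)^{1/2}(-\Delta_v+1)^{\ell/2}$ is of strictly lower order, so $\wt g_{j,n}:=(-\Delta_{t,x}+1)^{-1/2}(-\Delta_v+1)^{-\ell/2}\bigl(\theta\vartheta^2\,(-\Delta_{t,x}+1)^{1/2}(-\Delta_v+1)^{\ell/2}g_{j,n}\bigr)$ is the image of $g_{j,n}$ under an $L^{q_j}$-bounded operator, hence relatively compact; the same device applied to $(-\Delta_x+1)^{1/4}(-\Delta_v+1)^{\ell/2}$ produces $\wt\Phi_n$, the correction acting on the $(x,v)$-fibre uniformly in $t$, so that $\wt\Phi_n$ stays predictable and belongs to $L^2(\Om\X[0,\infty);HS(\mathscr H;L^2(\R_x^N\X\R_v)))$. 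For the $h_{j,n}$-term I would first move $\theta$ across $\Delta_M$: the Leibniz remainders carry at most one $x$-derivative and get reassigned to the $\wt g$-family (since $(-\Delta_{t,x}+1)^{1/2}$ dominates $\nabla_x$), while the principal part becomes $\wt\Pi_j(v)\Delta_M(-\Delta_v+1)^{\ell/2}\wt h_{j,n}$ with $\wt\Pi_j:=\vartheta^2\Pi_j\in\Cc^{k,\a}_\loc(\R)$, $\supp\wt\Pi_j\subset\supp\bbf$, and $\wt h_{j,n}:=\theta h_{j,n}$ relatively compact. Every new source function is then relatively compact in some $L^r(\Om;L^q)$ with $q$ equal to $p$ or one of the $q_j$, and hence $q\ge s$, which is property~(1).

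The heart of the proof is the two $f_n$-terms. By the Leibniz rule, $f_n\LL(\tfrac{\del}{\del t},\nabla_x,v)(\theta\vartheta^2)=\rho_0 f_n$ with $\rho_0=\vartheta^2\bigl(\partial_t\theta+\abf(v)\cdot\nabla_x\theta-\bbf(v):D_x^2\theta\bigr)$ compactly supported, $\Cc^\infty$ in $(t,x)$ and $\Cc^{k,\a}_\loc$ in $v$; and an integration by parts in $x$ gives $\div_x(f_n\bbf)\cdot\nabla_x(\theta\vartheta^2)=\div_x(\vartheta^2 f_n\,\bbf\nabla_x\theta)-\vartheta^2 f_n\,\bbf:D_x^2\theta$. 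Choose $\phi\in\Cc^\infty_c(Q)$ with $\phi\equiv1$ near $\supp\theta$ and $\z\in\Cc^\infty_c(\R)$ with $\z\equiv1$ near $\supp\vartheta$; then $\rho_0 f_n=M_{\rho_0}(\phi\z f_n)$, and, since $\div_x(\z f_n\bbf)=\z\,\div_x(f_n\bbf)$ and $\vartheta^2\z=\vartheta^2$, also $\vartheta^2 f_n\bbf\nabla_x\theta=M_{\vartheta^2\bbf\nabla_x\theta}(\phi\z f_n)$ and $\vartheta^2 f_n\bbf:D_x^2\theta=M_{\vartheta^2\bbf:D_x^2\theta}(\phi\z f_n)$. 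In case (a) of hypothesis (1), the sequences $\Theta_n^{(1)}:=(-\Delta_v+1)^{-\ell/2}(-\Delta_{t,x}+1)^{-1/2}(\phi\z f_n)$ and $\Theta_n^{(2)}:=(-\Delta_v+1)^{-\ell/2}(-\Delta_{t,x}+1)^{-1/2}\bigl(\nabla_x\theta\cdot\div_x(\z f_n\bbf)\bigr)$ are relatively compact; decomposing $\rho_0$, $\vartheta^2\bbf\nabla_x\theta$ and $\vartheta^2\bbf:D_x^2\theta$ into finite sums of tensors $a_k(v)\,b_k(t,x)$, and using that $(-\Delta_{t,x}+1)^{-1/2}\div_x$ and $(-\Delta_{t,x}+1)^{-1/2}\nabla_x$ are bounded on $L^p$, one checks that $(-\Delta_{t,x}+1)^{-1/2}(-\Delta_v+1)^{-\ell/2}$ applied to each of these terms is the image of $\Theta_n^{(1)}$ or $\Theta_n^{(2)}$ under an $L^p$-bounded operator (a product of $\Cc^\infty$- and $\Cc^{k,\a}$-symbol Bessel commutators), so it is relatively compact and joins the $\wt g$-family with exponent $p\ge s$. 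In case (b) of hypothesis (1) the argument is the same once $(-\Delta_{t,x}+1)^{-1/2}$ is split as $(-\Delta_{t,x}+1)^{-1/4}\circ(-\Delta_{t,x}+1)^{-1/4}$, the inner factor being absorbed against the relatively compact $(-\Delta_v+1)^{-\ell/2}(-\Delta_{t,x}+1)^{-1/4}(\phi f_n)$; the only delicate point is the first-order term $\div_x(f_n\bbf)\cdot\nabla_x(\theta\vartheta^2)$, but under (b) the diffusion plays no role near $\supp\eta$ — so $\vartheta$ may be taken with $\bbf\equiv0$ on $\supp\vartheta$ — whence that term is absent and only zeroth-order-in-$f_n$ contributions, comfortably handled by a single $(-\Delta_{t,x}+1)^{-1/4}$, remain.

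Collecting the new source functions gives \eqref{1.7.eqf} together with (1)--(4). I expect the third paragraph to be the main obstacle: unlike every other term of \eqref{1.7.eqwtf}, the two $f_n$-terms cannot be regularised by harmonic analysis alone, and one must fit them to precisely the right part of hypothesis~(1) — the second relative-compactness assumption in case (a) is visibly tailored to the $\div_x(f_n\bbf)$ term produced by the second-order Leibniz expansion — while in case (b) one must check that the weaker smoothing is still enough. The attendant technical ingredient is the Hölder-symbol pseudodifferential calculus in $v$ (the very reason for imposing \eqref{1.condka}); it is routine but has to be quoted carefully.
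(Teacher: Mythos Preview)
Your handling of the three ``already structured'' blocks $(II)_n^{(j)}$, $(III)_n^{(j)}$, $(IV)_n$ and of case~(a) for the $f_n$--terms is essentially the paper's argument: commute the localiser through the Bessel potentials via Proposition~\ref{1.7.prop} (your ``H\"older-symbol calculus''), and in case~(a) invoke both parts of hypothesis~(1)(a) directly. That part is fine.

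The genuine gap is in case~(b). Your assertion that ``under~(b) the diffusion plays no role near $\supp\eta$ --- so $\vartheta$ may be taken with $\bbf\equiv0$ on $\supp\vartheta$'' is unfounded. Hypothesis~(1)(b) is a \emph{compactness} assumption on $(-\Delta_v+1)^{-\ell/2}(-\Delta_{t,x}+1)^{-1/4}(\phi f_n)$; it says nothing about $\bbf$ vanishing anywhere, and in general $\bbf\not\equiv0$ on $\supp\eta$. With $\bbf\not\equiv0$ the term $2\,\div_x(f_n\bbf\nabla_x(\theta\vartheta^2))$ is genuinely present, and the weaker smoothing $(-\Delta_{t,x}+1)^{-1/4}$ leaves a residual operator $\partial_{x_\nu}(-\Delta_{t,x}+1)^{1/4}$ of total order~$5/4$, which $(-\Delta_{t,x}+1)^{1/2}$ alone cannot dominate; so this term \emph{cannot} be absorbed into the $\wt g$--family.

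What the paper does instead --- and what you are missing --- is precisely to route this residual into the $\wt h$--family. Since $R(\bbf(v))\subset M$, the vector $f_n\bbf\nabla_x(\theta\vartheta^2)$ lives in $M$, so only $\partial_{x_\nu}$ with $\nu$ in the $M$--block appear; for those indices one has the Lizorkin/Mihlin bound
\[
\partial_{x_\nu}(-\Delta_{t,x}+1)^{1/4}\,\big((-\Delta_{t,x}+1)^{1/2}-\Delta_M\big)^{-1}\in\mathscr L(L^p_{t,x}).
\]
Hence, writing $\bbf=(\bbf_{\mu\nu})$, one obtains
\[
2\,\div_x\big(f_n\bbf\nabla_x(\theta\vartheta^2)\big)=\sum_{\mu,\nu}(\vartheta\,\bbf_{\mu\nu})\big((-\Delta_{t,x}+1)^{1/2}-\Delta_M\big)(-\Delta_v+1)^{\ell/2}\wt K_{\mu\nu,n},
\]
with $(\wt K_{\mu\nu,n})_n$ relatively compact in $L^r(\Omega;L^p)$. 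This produces \emph{new} functions $\wt\Pi_{\mu\nu}:=\vartheta\,\bbf_{\mu\nu}\in\Cc^{k,\a}_\loc(\R)$ with $\supp\wt\Pi_{\mu\nu}\subset\supp\bbf$ and new $\wt h$--sources, enlarging $\wt{\mathscr J}$ accordingly. In short: under~(b) the second--order localisation debris is not killed by assuming $\bbf=0$ but is instead what forces the appearance of the $\wt\Pi_j\Delta_M$ terms in \eqref{1.7.eqf}.
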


In order to rewrite each term in \eqref{1.7.eqwtf} to our liking, let us state and prove the next proposition.

\begin{proposition} \label{1.7.prop}
	Let $d$ be a positive integer, $\mathscr U \subset \R^d$ be a nonempty open set, $1 < p < \infty$ be an exponent, $\ell \geq 0$, and $(k, \a) \in \Z \X [0,1]$ satisfy the relation \eqref{1.condka}. 
	
	Then, for any $\Lambda$ belonging to the Sobolev space $W^{-\ell, p}(\mathscr U)$ and $\phi \in \Cc_c^{k, \a}(\R^d)$, the distribution $\phi \Lambda$ lies in $W^{-\ell,p}(\R^d)$. Moreover, there exists a constant $C = C(d)$ such that
	\begin{equation*}
	\Vert \phi \Lambda \Vert_{W^{-\ell,p}(\R^d)} \leq C  \Vert\phi\Vert_{\Cc^{k,\a}(\R^d)}  \Vert \Lambda \Vert_{W^{-\ell, p}(\mathscr U)}. \label{1.7.prop'}
	\end{equation*}
\end{proposition}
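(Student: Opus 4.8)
The plan is to prove the statement by duality, reducing the estimate on $\phi\Lambda$ in $W^{-\ell,p}(\R^d)$ to a multiplication estimate on the dual side in $W^{\ell,p'}(\R^d)$. Recall that $W^{-\ell,p}(\mathscr U)$ is, by definition, the dual of the closure of $\Cc_c^\infty(\mathscr U)$ in $W^{\ell,p'}(\mathscr U)$ (or equivalently one may realize $\Lambda = (-\Delta+1)^{\ell/2}G$ for some $G\in L^p$, but the duality route is cleaner when $\ell$ is non-integer). Thus, to bound $\|\phi\Lambda\|_{W^{-\ell,p}(\R^d)}$ it suffices to estimate, for every $\psi\in\Cc_c^\infty(\R^d)$,
\begin{equation*}
|\langle \phi\Lambda,\psi\rangle| = |\langle \Lambda,\phi\psi\rangle| \le \|\Lambda\|_{W^{-\ell,p}(\mathscr U)}\,\|\phi\psi\|_{W^{\ell,p'}(\R^d)},
\end{equation*}
noting that $\phi\psi\in\Cc_c^{k,\a}(\R^d)$ has compact support, so after multiplying by a cutoff equal to $1$ on $\supp\phi$ we may view it as an element of (the relevant subspace of) $W^{\ell,p'}(\mathscr U)$. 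Hence everything comes down to the \emph{product rule estimate}
\begin{equation*}
\|\phi\psi\|_{W^{\ell,p'}(\R^d)} \le C(d)\,\|\phi\|_{\Cc^{k,\a}(\R^d)}\,\|\psi\|_{W^{\ell,p'}(\R^d)}
\end{equation*}
for $\psi\in\Cc_c^\infty(\R^d)$ and $\phi\in\Cc_c^{k,\a}(\R^d)$, with $(k,\a)$ tied to $\ell$ by \eqref{1.condka}.

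First I would dispose of the case $\ell=0$: then $k=\a=0$, $W^{0,p'}=L^{p'}$, and the claim is the trivial bound $\|\phi\psi\|_{L^{p'}}\le\|\phi\|_{L^\infty}\|\psi\|_{L^{p'}}$. Next, for integer $\ell\ge 1$ (so $k=\ell-1$, $\a=1$, i.e. $\phi\in\Cc_c^{\ell-1,1}=W^{\ell,\infty}$ with compact support), the product estimate is the classical Leibniz rule: $D^\beta(\phi\psi)=\sum_{\gamma\le\beta}\binom{\beta}{\gamma}D^\gamma\phi\,D^{\beta-\gamma}\psi$ for $|\beta|\le\ell$, and since $\phi\in W^{\ell,\infty}$ every $D^\gamma\phi$ with $|\gamma|\le\ell$ lies in $L^\infty$ with norm $\le\|\phi\|_{\Cc^{\ell-1,1}}$, so each term is bounded in $L^{p'}$ by $\|\phi\|_{\Cc^{\ell-1,1}}\|\psi\|_{W^{\ell,p'}}$. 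Finally, for non-integer $\ell>0$ with $k=\lfloor\ell\rfloor$ and $\a\in(\ell-k,1]$, I would write $\ell=k+\sigma$ with $\sigma=\ell-k\in(0,1)$ and use the characterization of $W^{\ell,p'}=W^{k+\sigma,p'}$ via the Gagliardo–Slobodeckij seminorm on the top derivatives: $\|u\|_{W^{k+\sigma,p'}}^{p'}\sim\|u\|_{W^{k,p'}}^{p'}+\sum_{|\beta|=k}[D^\beta u]_{W^{\sigma,p'}}^{p'}$. Applying the Leibniz rule to $D^\beta(\phi\psi)$ for $|\beta|=k$ reduces matters to estimating, for $\gamma\le\beta$, the fractional seminorm $[D^\gamma\phi\,D^{\beta-\gamma}\psi]_{W^{\sigma,p'}}$; by the elementary inequality $[fg]_{W^{\sigma,p'}}\le\|f\|_{L^\infty}[g]_{W^{\sigma,p'}}+\|g\|_{L^\infty}[f]_{W^{\sigma,p'}}$ (split the difference quotient $f(x)g(x)-f(y)g(y)$ as $f(x)(g(x)-g(y))+g(y)(f(x)-f(y))$), this is controlled once we know $D^\gamma\phi\in L^\infty$ and $[D^\gamma\phi]_{W^{\sigma,p'}}<\infty$ for $|\gamma|\le k$. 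The $L^\infty$ bound is immediate from $\phi\in\Cc^{k,\a}$; for the seminorm, the Hölder condition $|D^\gamma\phi(x)-D^\gamma\phi(y)|\le\|\phi\|_{\Cc^{k,\a}}|x-y|^{\a}$ together with $\a>\sigma$ and the compact support of $\phi$ makes $\int\int|D^\gamma\phi(x)-D^\gamma\phi(y)|^{p'}|x-y|^{-d-\sigma p'}\,dx\,dy$ converge (the singular diagonal part is integrable because $(\a-\sigma)p'>0$, and the tail is controlled by compact support). Here one also needs $D^{\beta-\gamma}\psi\in L^\infty$ when $\gamma<\beta$, which holds because $\psi\in\Cc_c^\infty$; the genuinely fractional contribution $g=D^\beta\psi$ (i.e. $\gamma=0$) is handled by $\|\phi\|_{L^\infty}[D^\beta\psi]_{W^{\sigma,p'}}\le\|\phi\|_{\Cc^{k,\a}}\|\psi\|_{W^{\ell,p'}}$.

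The main obstacle I anticipate is purely bookkeeping rather than conceptual: namely, making the duality pairing rigorous in the \emph{local} space $W^{-\ell,p}(\mathscr U)$ rather than on all of $\R^d$. Because $\phi$ has compact support inside $\R^d$ but $\Lambda$ only lives on $\mathscr U$, one must first fix an open set with $\supp\phi\subset V\Subset\mathscr U$ whenever $\supp\phi\subset\mathscr U$ — or, in the general case where $\supp\phi$ is \emph{not} assumed to sit inside $\mathscr U$, interpret the product $\phi\Lambda$ as a distribution on $\R^d$ supported in $\supp\phi\cap\overline{\mathscr U}$, which only makes sense after checking that the test function $\phi\psi$ can be extended/restricted appropriately; in the application (Remark \ref{1.remark}, Lemma \ref{1.7.lemma}) $\mathscr U=Q\times\R_v$ and $\phi=\theta\vartheta^2$ with $\theta\in\Cc_c^\infty(Q)$, so $\supp\phi\Subset\mathscr U$ and this subtlety evaporates. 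Once the pairing is set up on a fixed $V\Subset\mathscr U$, the remaining work is exactly the product estimate above, and the constant depends only on $d$ (through the combinatorics of the Leibniz rule and the dimensional constant in the Gagliardo seminorm equivalence), never on $\phi$ beyond its $\Cc^{k,\a}$ norm.
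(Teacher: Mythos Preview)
Your overall strategy---reduce by duality to the multiplication estimate $\|\phi\psi\|_{W^{\ell,p'}}\le C\|\phi\|_{\Cc^{k,\a}}\|\psi\|_{W^{\ell,p'}}$---is exactly what the paper does, and your treatment of the integer case via the Leibniz rule is identical to theirs.

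There is, however, a genuine gap in your non-integer case. The elementary inequality you invoke,
\[
[fg]_{W^{\sigma,p'}}\le\|f\|_{L^\infty}[g]_{W^{\sigma,p'}}+\|g\|_{L^\infty}[f]_{W^{\sigma,p'}},
\]
applied with $f=D^\gamma\phi$ and $g=D^{\beta-\gamma}\psi$ for $0<\gamma\le\beta$, produces the term $\|D^{\beta-\gamma}\psi\|_{L^\infty}\,[D^\gamma\phi]_{W^{\sigma,p'}}$. You observe that $\|D^{\beta-\gamma}\psi\|_{L^\infty}<\infty$ because $\psi\in\Cc_c^\infty$, but finiteness is not the point: the duality argument needs a bound \emph{uniform} over $\psi$ in the unit ball of $W^{\ell,p'}$, and $\|D^{\beta-\gamma}\psi\|_{L^\infty}$ is not controlled by $\|\psi\|_{W^{\ell,p'}}$. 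As written, the inequality therefore does not close.

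The repair is easy and keeps your argument self-contained: in the split $f(x)g(x)-f(y)g(y)=f(x)(g(x)-g(y))+g(y)(f(x)-f(y))$, do not pull out $\sup|g|$ in the second piece, but integrate in $x$ first. Since $f=D^\gamma\phi\in\Cc_c^{0,\a}$ with $\a>\sigma$, one has
\[
\sup_{y\in\R^d}\int_{\R^d}\frac{|f(x)-f(y)|^{p'}}{|x-y|^{d+\sigma p'}}\,dx\le C\|\phi\|_{\Cc^{k,\a}}^{p'}
\]
(split at $|x-y|=1$; use the H\"older bound near the diagonal and $|f|\le\|f\|_{L^\infty}$ away from it). This leaves $\|D^{\beta-\gamma}\psi\|_{L^{p'}}\le\|\psi\|_{W^{\ell,p'}}$ on the right, which is exactly what you want. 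For comparison, the paper sidesteps this computation by quoting the Grafakos--Oh $L^\infty$-endpoint Kato--Ponce inequality
\[
\|(-\Delta)^{\sigma/2}(FG)\|_{L^{p'}}\le C\big(\|(-\Delta)^{\sigma/2}F\|_{L^{p'}}\|G\|_{L^\infty}+\|F\|_{L^{p'}}\|(-\Delta)^{\sigma/2}G\|_{L^\infty}\big)
\]
with $G=D^\gamma\phi$; your repaired Gagliardo--Slobodeckij route is more elementary and avoids that nontrivial black box.
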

\begin{proof}
	On account of the definition of multiplication of distributions by regular functions and the duality relation $W^{-\ell, p}(\R^d) = W^{\ell,p'}(\R^d)^\star$, it suffices to show that there exists a constant $C = C(d)$ such that
	\begin{equation*}
	\Vert \phi u \Vert_{W^{\ell,p'}(\R^d)} \leq C \, \Vert\phi\Vert_{\Cc^{k,\a}(\R^d)} \, \Vert u \Vert_{W^{\ell, p'}(\mathscr U)},
	\end{equation*}
	for every $u \in W^{m,p}(\mathscr U)$. 
	
	If $m$ is an integer, then this inequality is derived directly from the Leibniz's rule. In the case that $m$ is not an integer, recall, since $\phi u \in W_0^{\ell,p'}(\mathscr U)$, its $W^{\ell,p'}$--norm is equivalent to
	$$\Vert \phi u \Vert_{L^{p'}(\R^d)} + \sum_{j=1}^d \left\Vert (-\Delta_y)^{\frac{\ell - \lfloor \ell \rfloor}{2}} \frac{\del^{\lfloor \ell \rfloor}}{\del y_j^{\lfloor \ell \rfloor}} (\phi u) \right\Vert_{L^{p'}(\R^d)}.$$ 
	Therefore, in virtue of the \textsc{L. Grafakos--S. Oh}'s Kato--Ponce inequality \cite{GOh}
	\begin{align*}
	\Vert (-\Delta)^{\sg/2} [FG] \Vert_{L^{p'}(\R^d)}  &\leq C \Vert (-\Delta)^{\sg/2} F \Vert_{L^{p'}(\R^d)} \Vert G \Vert_{L^\infty(\R^d)} \\& \quad + C \Vert (-\Delta)^{\sg/2} G \Vert_{L^{\infty}(\R^d)} \Vert F \Vert_{L^{p'}(\R^d)},
	\end{align*}
	which is valid for any $0 < \sg < 1$, and $F$ and $G \in \Ss(\R^d)$, the desired asseveration now follows.
\end{proof}

\begin{proof}[Proof of Lemma \ref{1.7.lemma}]
	Let us write \eqref{1.7.eqwtf} as 
	\begin{equation}
	\bigg(\frac{\del}{\del t} + \abf(v) \cdot\nabla_x - \bbf(v):D_x^2\bigg)\wt f_n = (I)_n + \sum_{j \in \JJ} (II)_{n}^{(j)} + \sum_{j \in \JJ} (III)_{n}^{(j)} + (IV)_n, \label{1.7.eqwtf'}
	\end{equation}
	where we are denoting
	\begin{equation} \label{1.7.decomp}
	\begin{dcases}
	(I)_n = f_n  \LL\bigg(\frac{\del}{\del t}, \nabla_x, v \bigg) (\theta\vartheta^2) + 2   \div_x  (f_n \bbf)  \cdot \nabla_x (\theta \vartheta^2)\\
	(II)_n^{(j)} = \theta \vartheta^2 ( - \Delta_{t,x} + 1)^{1/2}	(- \Delta_v + 1)^{\ell/2}g_{j,n} &[\,j\in \JJ\,] \\
	(III)_n^{(j)} =  \theta \vartheta^2 (\Pi_j(v) \Delta_M)	(- \Delta_v + 1)^{\ell/2} h_{j,n} &[\,j\in \JJ\,] \\
	(IV)_n = ( - \Delta_{x} + 1)^{1/4} (- \Delta_v + 1)^{\ell/2} \Phi_n \frac{dW}{dt}.
	\end{dcases}
	\end{equation}
	
	\textit{Step \#1}: First of all, let us inspect $(I)_n$. If one assumes hypothesis (a) in the statement of the theorem, it is clear that, applying Proposition \ref{1.7.prop} first to the $v$--variable and then to the $(t,x)$--ones,
	\begin{align}
	(I)_n &= (-\Delta_{t,x} + 1)^{1/2} (- \Delta_v + 1)^{\ell/2} Y_n(t,x,v) \label{1.7.i'},
	\end{align}
	where $(Y_n)_{n \in \N}$ is relatively compact in $L_\om^rL_{t,x,v}^p$. 
	
	On the other hand, if (b) holds, the very same argument applies to the first component, and
	\begin{equation}
	f_n  \LL\bigg(\frac{\del}{\del t}, \nabla_x, v \bigg) (\theta \vartheta^2) = (-\Delta_{t,x} + 1)^{1/2} (- \Delta_v + 1)^{\ell/2} Y_n'(t,x,v), \label{1.7.junk}
	\end{equation}
	with  $(Y_n')_{n \in \N}$ still being relatively compact in $L_\om^rL_{t,x,v}^p$. To facilitate the investigation of complementary parcel, notice that we can assume without loss of generality that $$M = \{ x = (x_1, \ldots, x_N) \in \R^N; x_\nu = 0 \text{ if $N' < \nu \leq N$}  \}$$ for some $ 0 \leq N' \leq N$. Hence,
	\begin{equation}
	2   \div_x  (f_n \bbf)  \cdot \nabla_x (\theta \vartheta^2) = -2 f_n\vartheta\bbf : D^2 (\vartheta \theta) +  2\div_x\big(  f_n\bbf \nabla_x(\theta\vartheta^2)\big). \label{1.7.junk'}
	\end{equation}
	Once again,
	\begin{equation}
	-2 f_n\vartheta\bbf : D^2 (\vartheta \theta) = (-\Delta_{t,x} + 1)^{1/2} (- \Delta_v + 1)^{\ell/2} Y_n''
	\end{equation}
	where $(Y_n'')_{n \in \N}$ is relatively compact in $L_\om^rL_{t,x,v}^p$. The second part, however, has the form
	$$2\div_x \big(  f_n \bbf \nabla_x (\theta \vartheta^2)  \big) = (\vartheta \bbf ) : P_M \nabla_x \otimes   	(-\Delta_{t,x} + 1)^{1/4}(-\Delta_v + 1)^{\ell/2}  K_n,$$
	where $(K_n)_{n\in\N} = \big(\, [K_n^{(1)}, \ldots, K_n^{(N)} ] \,\big)_{n\in \N}$ is relatively compact now in $L_\om^r(L_{t,x,v}^p)^N$. According to Theorem \ref{thm.lizorkin}, for any $1 \leq \nu \leq N'$,
	$$\bigg(\frac{\del}{\del x_\nu}\bigg) (-\Delta_{t,x} + 1)^{1/4} ( (-\Delta_{t,x} + 1)^{1/2} - \Delta_M)^{-1}$$
	defines a bounded operator in $L_{t,x}^p$. For this reason, writing $\bbf$ matricially as $\bbf = (\bbf_{\mu,\nu})_{1 \leq \mu,\nu \leq N}$,
	\begin{equation}
	2\div_x \big(  \bbf f_n  \nabla_x (\theta \vartheta^2)  \big) = \sum_{\mu, \nu = 1}^{N'} (\vartheta \bbf_{\mu,\nu}) ((-\Delta_{t,x} + 1)^{1/2} - \Delta_M) (-\Delta_v + 1)^{\ell/2}  \wt{K}_{\mu,\nu,n}, \label{1.7.junk''}
	\end{equation}
	with each $\wt{K}_{\mu,\nu,n}$ being relatively compact in $L_{t,x,v}^p$.
	
	Returning to the representation formulas \eqref{1.7.i'}---\eqref{1.7.junk''}, we conclude that
	$$(I)_n = \sum_{\mu, \nu = 1}^{N'} (\vartheta \bbf_{\mu, \nu})(-\Delta_M)(-\Delta_v + 1)^{\ell/2} K_{n, \mu, \nu}' + (-\Delta_{t,x} + 1)^{\ell/2}(-\Delta_v + 1)^{\ell/2} K_{n}'',$$
	where each and every $K_{n, \mu, \nu}'$ and $K_n''$ is relatively compact in $L_\om^rL_{t,x,v}^p$, as we wanted to show.
	
	\textit{Step \#2}: In an analogous fashion, all the other terms $(II)_n^{(j)}$, $(III)_n^{(j)}$, and $(IV)_n^{(j)}$ may be handled. Let us only point out a difference appearing in the analysis of $(III)_n^{(j)}$, in which we write 
	\begin{align*}
	(II)_n^{(j)} & = \theta \vartheta^2\Pi_j \Delta_M (- \Delta_v + 1)^{\ell/2} h_{j,n} \\
	& =  (\vartheta^2 \Pi_j ) \big( \Delta_M 	(- \Delta_v + 1)^{\ell/2} (\theta h_{j,n}) \big) \\&\quad - 2 (P_M \nabla_x) (\theta \Pi_j \vartheta^2) \cdot (P_M\nabla_x) (- \Delta_v + 1)^{\ell/2} h_{j,n} \\&\quad-  (\Delta_M)( \theta\Pi_j \vartheta^2 ) (- \Delta_v + 1)^{\ell/2} h_{j,n}(t,x,v).
	\end{align*}
	Evidently, the first term has the form $\vartheta \Pi_j (\Delta_M)(-\Delta_v + 1)^{\ell/2} H_{j,n} $, where $H_{j,n}$ is relatively compact in $L_\om^rL_{t,x,v}^{q_j}$. Moreover, according to Proposition \ref{1.7.prop}, the last two parts are equal to $ (-\Delta_{t,x} + 1)^{1/2}(-\Delta_v + 1)^{\ell/2} H_{j,n}' $, with  $H_{j,n}$ being again relatively compact in $L_\om^rL_{t,x,v}^{q_j}$. The lemma is hereby proven.
\end{proof}

Since trivially $(\wt f_n)$ is bounded in $L_\om^r L_{t,x,v}^p$ and $\theta \int_\R\eta f_n \, dv = \int_\R \eta \wt f_n\, dv $, the relative compactness of the averages now in $L_\om^rL_{t,x}^s$ is guaranteed by Theorem \ref{1.firstthm}. The final assertion in the statement of Theorem \ref{1.secondthm} is corollary of Proposition \ref{1.3.lemma}. \qed

\section{Proof of Theorems \ref{1.firstparthm} and \ref{1.secondparthm}}
We will only briefly depict the proof of Theorem \ref{1.firstparthm}, for the remaining details are indistinguishable from the ones found in Theorems \ref{1.firstthm} and \ref{1.secondthm}---as matter of fact, the verification of Theorem \ref{1.secondparthm} is sensibly more unproblematic than that of Theorem \ref{1.secondthm}.

First of all, we may assume that $M \neq \{ 0\}$, otherwise the conclusions can be derived from Theorems \ref{1.firstthm} and \ref{1.secondthm}. Furthermore, we may suppose, passing to a subsequence if necessary, to assume again that all $(g_{j,n})_{n \in \N}$ and $(\Phi_n)_{n\in \N}$ are convergent in their respective spaces. Whereas we will still define $\nn_\g$ as in Lemma \ref{1.2.1.molleta}, we will now simply put $\eta_{\d, \g} = (\varrho_\d \star\nn_\g)$, where $(\varrho_\ve)$ is a mollifier in the real line. Define also the Fourier multiplier
\begin{equation*}
(\wt{R\cE})(i\k, v) = \frac{ (P_M\k) \cdot \bbf(v) (P_M\k)}{(P_M\k\cdot P_M\k) } = \text{``the restricted normalized elliptic symbol''},
\end{equation*}
which can be shown to satisfy the truncation property (recall that $M^\perp \subset N(\bbf(v))$ for all $v \in \R$). Thus, if $\ff_{m,n} = f_m - f_n$, and $0 < \d$ and $\g < 1$ once more, introduce the Fourier decomposition
\begin{align*}
\ff_{m,n}^{(1)} &= \FF_{t, x}^{-1} \bigg[ \lambda\bigg( \frac{\sqrt{\tau^2 + |\k|^2}}{\g} \bigg) (\FF_{t, x}\ff_{m,n}) \bigg], \\
\ff_{m,n}^{(2)} &= \FF_{t, x}^{-1} \bigg[ \psi\bigg( \frac{\sqrt{\tau^2 + |\k|^2}}{\g} \bigg)\lambda\bigg( \frac{(\wt{R\cE})(i\k, v)}{\d} \bigg) (\FF_{t, x}\ff_{m,n}) \bigg], \\
\ff_{m,n}^{(3)} &= \FF_{t, x}^{-1} \bigg[ \psi\bigg( \frac{\sqrt{\tau^2 + |\k|^2}}{\g} \bigg) \psi\bigg(\frac{(\wt{R\cE})(i\k, v)}{\d} \bigg)\lambda\bigg( \frac{(\wt{R\LL})(i\tau, i\k, v)}{\d} \bigg) (\FF_{t, x}\ff_{m,n}) \bigg], \text{ and}\\
\ff_{m,n}^{(4)} &= \FF_{t, x}^{-1} \bigg[ \psi\bigg( \frac{\sqrt{\tau^2 + |\k|^2}}{\g} \bigg) \psi\bigg( \frac{(\wt{R\cE})(i\k, v)}{\d} \bigg) \psi \bigg( \frac{(\wt{R\LL})(i\tau, i\k, v)}{\d} \bigg)(\FF_{t, x}\ff_{m,n}) \bigg],
\end{align*}
where $\psi(z)$, $\lambda(z)$, $\wt{\LL}(i\tau, i\k, v)$ and $(\wt{R\LL})(i\tau,i\k,v)$ are also as before. Finally, write
\begin{align*}
\int_\R \ff_{m,n} \eta\, dv &= \int_\R \ff_{m,n}(\eta - \eta_{\d,\g})\, dv + \sum_{\nu=1}^4 \int_\R \ff_{m,n}^{(\nu)} \eta_{\d,\g}\,dv \stackrel{\text{def}}{=} \sum_{\nu=0}^4 \vf_{m,n}^{(\nu)}.
\end{align*}

Let $\varphi \in (L^1 \cap L^\infty)(\R_t\X\R_x^N)$ be given. Reprising the manipulations performed in the proof of Theorem \ref{1.firstthm}, for any $0 \leq \nu \leq 3$, $\varphi \vf_{m,n}^{(\nu)}$ have all an uniformly ``small'' $L_\om^rL_{t,x,v}^s$--norm as $\d$ and $\g$ tend to $0$ in a regulated manner. On the other hand, once the estimate \eqref{1.7.bound} now reads
\begin{align*}
\bigg| \psi\bigg(\frac{\sqrt{\tau^2 + |\k|^2}}{\g}  \bigg)\psi\bigg( \frac{(\wt {R\cE})(i\k, v)}{\d} \bigg)\psi \bigg( \frac{(\wt{R\LL})(i\k, v)}{\d} \bigg) \frac{\sqrt{\tau^2 + |\k|^2} + |P_M\k|^2}{\LL(i\tau, i\k, v)}\bigg| \leq C_{\d,\g} 
\end{align*}
for all $(\tau, \k, v) \in (\R\X\R^N \setminus( \R\X M^\perp \cup  \{ 0 \}\X M ) )\X(\supp \eta + (-1,1))$ (see \cite{N2} for an explicit calculation), it is clear that
$$\lim_{m,n \to \infty} \bbE \Vert \varphi \vf_{m,n}^{(4)} \Vert_{L^s(\R_t\X\R_x^N)}^r = 0,$$
in  spite of $\bbf(v)$ possibly not having total rank in $M$ and the right-hand side of \eqref{1.eqfpar} being relatively more singular. Based on these observations, Theorem \ref{1.firstthm} follows. \qed

\section{Last remarks} \label{1.secremarks}

\begin{remark}
	[On the spatially periodic case]
	It is not difficult to see that our results may be translated from $\R_x^N$ to $\bbT_x^N$, the $N$-dimensional torus, if one employs the so-called De Leeuw's theorem; see, e.g., \textsc{E. M. Stein--G. Weiss} \cite{SW}, theorem 3.8 in chapter VII. In this case, the local Theorems \ref{1.secondthm} and \ref{1.secondparthm} may even be strengthen as localizations in $x$ are no longer needed.
\end{remark}

\begin{remark} 
	[On the exponents $p$, $q_j$ and $r$] \label{Remark.LPT3}
	
	Should the stochastic terms $(\Phi_n)_{n \in \N}$ be absent in our averaging lemmas---i.e., we are in a deterministic setting---, not only the range  $1 \leq r < \infty$ is allowed, but one also can choose $s$ to be least number between $q_j$ and $p$. This represents a slight improvement on the exponent conditions of \textsc{P.-L. Lions--B. Perthame--E. Tadmor} \cite{LPT}, which assumed $\operatorname{card.} \JJ = 1$, $p = q_j$ and $1 < p \leq 2$. 
\end{remark}

\begin{remark}
	[On the exponents $p$, $q$ and $r$, part II]
	In a nutshell, the role of the function $\varphi \in L_{t,x}^{1}\cap L_{t,x}^\infty$ in Theorems \ref{1.firstthm} and \ref{1.firstparthm} was to convert all the $L^p$--, $L^{q_j}$-- and $L^2$--estimates into $L^s$--ones. Therefore, as Remark \ref{1.4.remarkifty} indicates, $\varphi$ is immaterial if such exponents are identical and one possesses an additional a priori estimate. 
	
	\begin{corollary}
		In the context of Theorems \ref{1.firstthm} and \ref{1.firstparthm}, assume in addition that
		\begin{enumerate}
			\item there exists some  $1 \leq \varsigma < p$ such that $(f_n)_{n \in \N}$ is also bounded in $L^r(\Om; L^{\varsigma}(\R_t$ $\X\R_x^N\X\R_v))$, and
			\item $p = q_j$ for all $j \in \JJ$.
		\end{enumerate} 
		Then, if either $p = 2$, or $\Phi_n \equiv 0$, the sequence of averages $(\int_{\R_v} f_n \eta\, dv)_{n \in \N}$ is relatively compact in $L^r(\Om; L^p(\R_t\X\R_x^N))$.
	\end{corollary}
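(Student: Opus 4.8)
The plan is to reduce this corollary to Theorems~\ref{1.firstthm} and~\ref{1.firstparthm} by a careful bookkeeping of the exponent $s$ together with an interpolation argument that removes the dependence on $\varphi$. First I would observe that, since $p=q_j$ for all $j\in\JJ$ and $2\ge p$ in the relevant regime (or $\Phi_n\equiv 0$, so that the $L^2$-estimate from the stochastic term is simply absent), the least exponent $s$ between $p$, the $q_j$'s and $2$ is exactly $p$ itself: in the deterministic case the $L^2$-term plays no role, and in the case $p=2$ all three coincide. Thus Theorems~\ref{1.firstthm} and~\ref{1.firstparthm} already give relative compactness of $\bigl(\varphi\int_{\R_v}f_n\eta\,dv\bigr)_n$ in $L^r(\Om;L^p(\R_t\X\R_x^N))$ for every $\varphi\in(L^1\cap L^\infty)(\R_t\X\R_x^N)$; the only gap is passing from this localized statement to the global one $\varphi\equiv 1$.

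The mechanism for closing that gap is precisely Remark~\ref{1.4.remarkifty}, which is the reason hypothesis~(1) (the extra bound in $L^r(\Om;L^\varsigma)$ for some $1\le\varsigma<p$) is needed. Concretely, I would re-run the proof of Theorem~\ref{1.firstthm} verbatim but tracking $\varphi$: the decomposition~\eqref{1.1.decompv} expresses $\varphi\bigl(\int f_m\eta\,dv-\int f_n\eta\,dv\bigr)=\sum_{\nu=0}^4\varphi\vf_{m,n}^{(\nu)}$. For $\nu=0$ the estimate~\eqref{1.3.estvf0}, for $\nu=2$ the estimate~\eqref{1.5.estvf2}, for $\nu=3$ the estimate~\eqref{1.6.estvf3}, and for $\nu=4$ the limit~\eqref{1.7.estvf4} were all obtained by first bounding $\vf_{m,n}^{(\nu)}$ in $L^p_{t,x}$ (resp.\ $L^{q_j}_{t,x}=L^p_{t,x}$ or $L^2_{t,x}$, which here all equal $L^p_{t,x}$) \emph{without} using $\varphi$, and only afterward multiplying by $\varphi\in L^1\cap L^\infty$ to interpolate down to $L^s$; when $s=p$ this last step is unnecessary and one may take $\varphi\equiv 1$ directly. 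The genuinely $\varphi$-dependent term is $\vf_{m,n}^{(1)}$, the low-frequency part: Proposition~\ref{1.4.prop} used $\Vert\varphi\Vert_{L^p_{t,x}}$ and $\Vert\eta_{\d,\g}\Vert_{L^\infty}\le\g^{-\chi}$. Here Remark~\ref{1.4.remarkifty} supplies the fix: invoking Young's inequality for convolutions with the $L^\varsigma$-bound on $(f_n)$ gives $\bbE\Vert\vf_{m,n}^{(1)}\Vert_{L^p(\R_t\X\R_x^N)}^r\le C\g^{r(N+1)(1/\varsigma-1/p)}\Vert\eta_{\d,\g}\Vert_{L^{\varsigma'}_v}^r\,\bbE\Vert\ff_{m,n}\Vert_{L^\varsigma_{t,x,v}}^r$, and then choosing the truncation parameter $\chi=\chi(p,\varsigma)$ small enough absorbs the factor $\Vert\eta_{\d,\g}\Vert_{L^{\varsigma'}_v}\le C\g^{-\chi/\varsigma'}$, leaving a bound $C\g^{\qq}$ with $\qq=\qq(p,\varsigma)>0$ and $C$ depending only on $\Vert\varphi\Vert_{L^\infty_{t,x}}$ (finite, as $\varphi\equiv 1$ means we merely integrate over a fixed window; more precisely one runs the argument with $\varphi$ ranging over an exhausting sequence of indicators and passes to the limit, or simply notes the bound is uniform).

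I would then assemble the pieces exactly as in the final subsection of Section~2: from~\eqref{1.1.decompv} with $\varphi\equiv 1$ and $s=p$,
\begin{align*}
\limsup_{m,n\to\infty}\bbE\Bigl\Vert\int_{\R_v}(f_m-f_n)\eta\,dv\Bigr\Vert_{L^p(\R_t\X\R_x^N)}^r
&\le C\Vert\eta_{\d,\g}-\eta\Vert_{L^{p'}(\R)}^r+C\g^\qq\\
&\quad+C\Vert\eta_{\d,\g}\Vert_{L^\infty(\R)}^r\Bigl[\,\cdots\,\Bigr],
\end{align*}
where the bracketed term involves the two measure-quantities $\sup\meas\{v\in\supp\eta_{\d,\g};|\LL|\le\d\}$ raised to positive powers; letting $\d\to 0_+$ these vanish by Lemmas~\ref{1.5.lemma2} and~\ref{1.6.lemma}, and $\Vert\eta_{\d,\g}-\eta\Vert_{L^{p'}}\to\Vert\nn_\g-\eta\Vert_{L^{p'}}$ by Lemma~\ref{1.2.1.molleta}(b.iii); finally letting $\g\to 0_+$ kills $C\g^\qq$ and, by Lemma~\ref{1.2.1.molleta}(a.iii), $\Vert\nn_\g-\eta\Vert_{L^{p'}}\to 0$. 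Hence $(\int_{\R_v}f_n\eta\,dv)_n$ is Cauchy in $L^r(\Om;L^p(\R_t\X\R_x^N))$, which is the claim; the single-phase case (Theorem~\ref{1.firstparthm}) is identical, using the decomposition of Section~4 in place of that of Section~2. The main obstacle is purely bookkeeping: one must verify that in \emph{every} one of the five parcels the $\varphi$-free estimate already lands in $L^p_{t,x}$ (not just $L^s_{t,x}$ after the H\"older step), which is true here precisely because $p=q_j$ for all $j$ and because under the stated hypothesis ($p=2$ or $\Phi_n\equiv 0$) the $L^2$-contribution either coincides with $L^p$ or is absent — so no genuine new difficulty arises, only the replacement of Proposition~\ref{1.4.prop} by the sharper Remark~\ref{1.4.remarkifty}.
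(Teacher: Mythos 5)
Your proof is correct and follows exactly the route the paper intends: the remark immediately preceding the corollary explains that $\varphi$ served only to convert the $L^p$, $L^{q_j}$, and $L^2$ estimates into $L^s$ ones, and that Remark~\ref{1.4.remarkifty} removes the sole remaining genuine use of $\varphi$ (in the low-frequency term $\vf_{m,n}^{(1)}$) once the extra $L^\varsigma$-bound is available. The only slip is cosmetic: your opening clause ``$2\ge p$ in the relevant regime'' is not needed when $\Phi_n\equiv0$ (where $p>2$ is allowed since the $L^2$ constraint is absent), but you immediately correct this, so the argument stands.
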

	
	Although this assumption that $(f_n)_{n\in\N}$ is bounded in $L_\om^rL_{t,x,v}^{\varsigma}$ is commonly not found in the literature, in the applications to kinetic equations, the boundedness in $L_\om^rL_{t,x,v}^p$ is equivalent to one in $L_\om^rL_{t,x,v}^1$, wherefore it is not of extraordinary character. 
\end{remark}

\begin{remark}
	[On the exponents $p$, $q$ and $r$, part III]
	In the same spirit of the last two remarks, notice that essentially the low-frequency truncations $\lambda(\sqrt{\tau^2 + |\k|^2}/\g)$ are introduced so that one could to replace the operators $(-\Delta_{t,x} + 1)$ with its homogeneous counter-part $-\Delta_{t,x}$. Nevertheless, it is clear that, if $\bbf(v) \equiv 0$ and, in Equation \eqref{1.eqf}, $(-\Delta_{t,x} + 1)$ and $(-\Delta_{x} + 1)$ are substituted respectively by $-\Delta_{t,x}$ and $-\Delta_x$, then these truncations may be discarded.	One can thus deduce the next global averaging lemma, which recuperates a relative compactness result of \textsc{B. Perthame--P.E. Souganidis} \cite{PS}. 
	
	\begin{proposition} [The ``global'' hyperbolic averaging lemma]
		Given exponents $1 < p < \infty$, $1 \leq r \leq 2$ and $\ell \geq 0$, let $\abf \in \Cc_\loc^{k, \a}(\R; \R^N)$, where the real numbers $k$ and $\a$ satisfy the relation \eqref{1.condka}.
		
		Assume that, for any integer $n \in \N$, the equation
		\begin{align}
		\frac{\del f_n}{\del t} + \abf(v) \cdot& \nabla_x f_n = (-\Delta_{t,x})^{1/2} (- \Delta_v + 1)^{\ell/2}  g_n + ( - \Delta_{x})^{1/4} (- \Delta_v + 1)^{\ell/2} \Phi_n \frac{dW}{dt} \nonumber
		\end{align}
		is almost surely obeyed in $\Dd'(\R_t\X\R_x^N\X\R_v)$, where
		\begin{enumerate}
			\item  $(f_n)_{n\in\N}$ is a bounded sequence in $L^r\big(\Omega; L^p(\R_t\X\R_x^N\X\R_v))$,
			\item $(g_n)_{n \in \N}$ is a convergent sequence in  $L^r(\Omega; L^p(\R_t\X\R_x^N\X\R_v))$, and
			\item  $(\Phi_n)_{n \in \N}$ is a predictable and convergent sequence in $L^2(\Omega\X [0,\infty)_t;$ $HS(\mathscr H;$ $L^2(\R_x^N \X \R_v)))$.
		\end{enumerate}
		
		Finally, let $\eta \in L^{p'}(\R)$ have compact support, and presume that the \textnormal{nondegeneracy condition}
		\begin{align} 
		\meas \big\{ v \in \supp \eta; \tau + \abf(v) &\cdot \k = 0  \big\} = 0 \text{ for all $(\tau, \k) \in \R\X\R^N$ with $\tau^2 + |\k|^2 = 1$}\nonumber
		\end{align}
		holds.
		
		Then, if either $p = 2$, or $\Phi_n \equiv 0$, the sequence of averages $(\int_{\R_v} f_n \eta\, dv)_{n \in \N}$ is relatively compact in $L^r(\Om; L^p(\R_t\X\R_x^N))$.
	\end{proposition}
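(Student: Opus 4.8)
The plan is to rerun the argument of Theorem \ref{1.firstthm} with the simplifications afforded by $\bbf\equiv0$ (so the effective range is $\{0\}$, there is no parabolic phase, no support splitting of $\eta$, and the restricted symbol coincides with $\wt\LL$) and by the homogeneity of the source operators (which makes the low-frequency cut-off superfluous). First I would use that $(g_n)$ and $(\Phi_n)$ are already convergent to reduce the claim to showing that $\big(\int_\R f_n\eta\,dv\big)$ is Cauchy in $L^r(\Om;L^p(\R_t\X\R_x^N))$; writing $\ff_{m,n}=f_m-f_n$, the difference obeys, almost surely in $\Dd'(\R_t\X\R_x^N\X\R_v)$, the linear equation
\[
\Big(\frac{\del}{\del t}+\abf(v)\cdot\nabla_x\Big)\ff_{m,n}=(-\Delta_{t,x})^{1/2}\Big[1\pm\frac{\del^{\lf}}{\del v^{\lf}}(-\Delta_v)^{\zz/2}\Big]\gf_{m,n}+(-\Delta_x)^{1/4}\Big[1\pm\frac{\del^{\lf}}{\del v^{\lf}}(-\Delta_v)^{\zz/2}\Big]\Big(\Psi_{m,n}\frac{dW}{dt}\Big),
\]
with $\lf+\zz=\ell$, $\gf_{m,n}\to0$ in $L^r(\Om;L^p)$ and $\Psi_{m,n}\to0$ in $L^2(\Om\X[0,\infty);HS(\mathscr{H};L^2(\R_x^N\X\R_v)))$, exactly as in \eqref{1.2.eqgmn}.

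Next I would smooth the weight: with $\nn_\g$ as in part (a) of Lemma \ref{1.2.1.molleta}, I simply take $\eta_{\d,\g}=\varrho_\d\star\nn_\g\in\Cc_c^\infty(\R)$ (no decomposition of the support is needed, the nontransiency condition being vacuous here). Since $(-\Delta_{t,x})^{1/2}$ and $(-\Delta_x)^{1/4}$ are already homogeneous, the low-frequency cut-off $\lambda(\sqrt{\tau^2+|\k|^2}/\g)$ of \eqref{1.2.2.decompg} may be dropped, and as $M=\{0\}$ the restricted symbol coincides with $\wt\LL$; the Fourier decomposition thus collapses to the two pieces $\ff_{m,n}=\ff_{m,n}^{(1)}+\ff_{m,n}^{(2)}$ with $\FF_{t,x}\ff_{m,n}^{(1)}=\lambda(\wt\LL(i\tau,i\k,v)/\d)\FF_{t,x}\ff_{m,n}$ and $\FF_{t,x}\ff_{m,n}^{(2)}=\psi(\wt\LL(i\tau,i\k,v)/\d)\FF_{t,x}\ff_{m,n}$, where now $\wt\LL(i\tau,i\k,v)=i(\tau+\abf(v)\cdot\k)/\sqrt{\tau^2+|\k|^2}$, whence $\sqrt{\tau^2+|\k|^2}/\LL(i\tau,i\k,v)=1/\wt\LL(i\tau,i\k,v)$. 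I then split $\int_\R\ff_{m,n}\eta\,dv=\vf_{m,n}^{(0)}+\vf_{m,n}^{(1)}+\vf_{m,n}^{(2)}$, with $\vf_{m,n}^{(0)}=\int_\R\ff_{m,n}(\eta-\eta_{\d,\g})\,dv$ and $\vf_{m,n}^{(\nu)}=\int_\R\ff_{m,n}^{(\nu)}\eta_{\d,\g}\,dv$ for $\nu=1,2$.

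The three pieces are estimated as in Section 2. For $\vf_{m,n}^{(0)}$ the trivial bound \eqref{1.3.trivialineq} gives $\bbE\Vert\vf_{m,n}^{(0)}\Vert_{L^p}^r\le C\Vert\eta_{\d,\g}-\eta\Vert_{L^{p'}(\R)}^r$. For $\vf_{m,n}^{(1)}$, Proposition \ref{1.5.proposition1} (with $\bbf\equiv0$, so $\wt\LL$ reduces to its hyperbolic part) shows $\lambda(\wt\LL/\d)$ has the truncation property uniformly in $v$; combining the attendant uniform $L^\sigma_{t,x,v}\to L^\sigma_{t,x}$ bound (of size $\lesssim\Vert\eta_{\d,\g}\Vert_{L^\infty}$, independent of $\d$) with the Plancherel estimate at $\sigma=2$ (of size $\lesssim\Vert\eta_{\d,\g}\Vert_{L^\infty}\big(\sup_{\tau^2+|\k|^2=1}\meas\{v\in\supp\eta_{\d,\g}:|\LL(i\tau,i\k,v)|\le\d\}\big)^{1/2}$) and interpolating exactly as in Lemma \ref{1.5.lemma1} yields $\bbE\Vert\vf_{m,n}^{(1)}\Vert_{L^p}^r\le C\Vert\eta_{\d,\g}\Vert_{L^\infty}^r\big(\sup\meas\{\cdots\}\big)^{r\pp}$, and the supremum of measures tends to $0$ as $\d\to0_+$ by the nondegeneracy hypothesis via the compactness argument of Lemma \ref{1.5.lemma2}. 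For $\vf_{m,n}^{(2)}$ I would divide the equation by $\LL(i\tau,i\k,v)$ on $\{|\wt\LL|\ge\d/2\}$; since $1/\wt\LL(i\tau,i\k,v)$ is homogeneous of degree $0$ and $|\k|^{1/2}/\LL(i\tau,i\k,v)$ of degree $-\tfrac12$, no low-frequency truncation is required, and after integrating by parts in $v$ (treating $(-\Delta_v)^{\zz/2}$ through the decay estimate of Proposition \ref{1.7.1.prop}, with every velocity playing the role of the set $K_h$) the deterministic contribution is bounded via Theorem \ref{thm.lizorkin} by $\bbE\Vert(\mathrm{det.})\Vert_{L^p}^r\le C_{\d,\g}\,\bbE\Vert\gf_{m,n}\Vert_{L^p}^r\to0$; the stochastic contribution is treated as in Lemma \ref{1.7.3.lemma} by the Plancherel theorem and the It\^o isometry, noting that on $\supp\psi(\wt\LL/\d)$ one has $|\k|^{1/2}/|\LL(i\tau,i\k,v)|\le\tfrac2\d\,|\k|^{1/2}(\tau^2+|\k|^2)^{-1/2}$, so that $\int_\R\big(\int_\R|m(\tau,\k,w)|^2\,dw\big)\,d\tau\lesssim\d^{-2}\int_\R|\k|(\tau^2+|\k|^2)^{-1}\,d\tau=\pi\d^{-2}$ is bounded uniformly in $\k$ — the homogeneous factor $|\k|^{1/2}$ cancels the low-frequency singularity, so no cut-off is needed — giving $\bbE\Vert(\mathrm{stoch.})\Vert_{L^2}^2\le C_\d\,\bbE\int_0^\infty\Vert\Psi_{m,n}(t)\Vert_{HS}^2\,dt\to0$. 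This is the only place $p=2$ is invoked; if $\Phi_n\equiv0$ it is vacuous. Assembling the three bounds, letting $m,n\to\infty$, then $\d\to0_+$ (using Lemma \ref{1.2.1.molleta} and $\Vert\eta_{\d,\g}\Vert_{L^\infty}\le\g^{-\chi}$), and finally $\g\to0_+$ gives $\lim_{m,n\to\infty}\bbE\Vert\int_\R\ff_{m,n}\eta\,dv\Vert_{L^p}^r=0$, so $(\int_\R f_n\eta\,dv)$ is Cauchy, hence convergent, in $L^r(\Om;L^p(\R_t\X\R_x^N))$.

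The only genuine point requiring care is confirming that dropping $\lambda(\sqrt{\tau^2+|\k|^2}/\g)$ is legitimate: the symbols obtained after dividing by $\LL$ are smooth on $\R_\tau\X\R_\k^N\setminus\{0\}$ and homogeneous of degree $0$ or $-\tfrac12$, and the excluded origin is Lebesgue-null, so Theorem \ref{thm.lizorkin} — and, for the truncated factors, the truncation property of Proposition \ref{1.5.proposition1} — apply verbatim, while the potential low-dimensional low-frequency obstruction in the stochastic term is defused by the homogeneous $|\k|^{1/2}$ as above. The remaining difference from Theorem \ref{1.firstthm} is cosmetic: every one of the three pieces is estimated directly in $L^p(\R_t\X\R_x^N)$ — when $p=2$ the $L^2$ bound of the stochastic term is already an $L^p$ bound, and when $\Phi_n\equiv0$ there is no stochastic term — so there is no reconciliation of distinct $L^p$, $L^{q_j}$, $L^2$ exponents to perform, the auxiliary weight $\varphi$ of Theorem \ref{1.firstthm} is unnecessary, and the conclusion is obtained in $L^r(\Om;L^p)$ rather than only in $L^r(\Om;L^s)$.
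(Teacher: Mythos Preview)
Your proposal is correct and follows precisely the route the paper indicates: the paper does not give a detailed proof but only remarks that when $\bbf\equiv 0$ and the source operators $(-\Delta_{t,x}+1)^{1/2}$, $(-\Delta_x+1)^{1/4}$ are replaced by their homogeneous versions, the low-frequency truncation $\lambda(\sqrt{\tau^2+|\k|^2}/\g)$ may be discarded, after which the argument of Theorem \ref{1.firstthm} runs through with the obvious simplifications. Your sketch fleshes out exactly this observation --- the collapse of the Fourier decomposition (since $M=\{0\}$ forces $\wt{R\LL}=\wt\LL$), the trivialization of the support-splitting of $\eta_{\d,\g}$, the homogeneity computation $\int_\R|\k|(\tau^2+|\k|^2)^{-1}\,d\tau=\pi$ that replaces the $\zeta_\g$ analysis, and the disappearance of $\varphi$ once all estimates live in a single $L^p$ --- and matches the paper's intent.
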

\end{remark}

\begin{remark}[On hypothesis (a) of Theorem \ref{1.secondthm}]
	Although the assumption (b) of Theorem \ref{1.secondthm} is more general than (a), the latter has its own appeal. First, notice that, if $\s(v)$ is the square-root of $\bbf(v)$, the condition on $\nabla_x f_n$ reads that
	$$(-\Delta_v + 1)^{-\ell/2} (-\Delta_{t,x} + 1)^{-1/2} \big(\div_x (\phi\s(v)) \cdot \,\operatorname{div}_x( f_n \z \s(v))\big)$$
	is relatively compact in $L^r(\Om; L^p(\R_t\X\R_x^N\X\R_v))$ for any $\phi \in \Cc_c^\infty(Q\X\R_v)$ and $\z \in \Cc_c^\infty(\R_v)$. This hypothesis is very akin to the impositions arising in the kinetic formulation of \textsc{G.-Q. Chen}--\textsc{B. Perthame} \cite{CP} (see also \textsc{A. Debussche}--\textsc{M. Hofmanová}--\textsc{J. Vovelle} \cite{DHV}); for this reason, it may be regarded as quite natural. 
	
	Additionally, as we have seen in the proof of Theorem \ref{1.secondthm}, should (a) hold, all the byproducts from the localization of $f_n$ itself can be rewritten as
	$$(-\Delta_{t,x} + 1)^{1/2}(-\Delta_v + 1)^{\ell/2} F_n,$$
	with $(F_n)_{n\in \N}$ being convergent in $L_\om^r L_{t,x,v}^p$---expressed in a different way, it does not bring forth derivatives of second order. 
\end{remark}

\begin{remark} [Equations with discontinuous coefficients]
	In certain models, one considers $\bbf(v)$ having the isotropic form \eqref{0.bsedimentation}, where $\mathbf q(v) = 0$ for $v$ belonging to some interval $I$, and $\mathbf q(v) = \mathbf q_c > 0$ for $v \notin I$, making thus \eqref{1.eqf} strongly degenerate; see, e.g., \textsc{R. Bürger--S. Evje--K. H. Karlsen} \cite{BEK} and \textsc{R. Bürger--K. H. Karlsen} \cite{BK}. Despite possessing now discontinuous coefficients, our theory may still apply to Equation \eqref{1.eqf} if one performs the following adjustment.
	
	Assume that, in any of the averaging lemmas we have studied here, all hypotheses are preserved, but one weakens the requirement on $\LL(i\tau, i\k, v)$ to $\abf \in (\Cc_\loc^{k,\a} \cap L_\loc^{p'}) (\R \setminus G; \R^N)$ and $\bbf \in (\Cc_\loc^{k,\a}\cap L_\loc^{p'})(\R \setminus G; \mathscr L(\R^N))$, where $G \subset \R$ is a closed set of zero Lebesgue measure. (The condition that $\abf(v)$ and $\bbf(v)$ belong to $L_\loc^{p'}$ is only made so as to Equations \eqref{1.eqf} and \eqref{1.eqfpar} to make sense).
	
	Following the proof of Lemma \ref{1.2.1.molleta}, one may construct a family of functions $(\Xi_{\ve})_{0 < \ve < 1}$, such that 
	\begin{enumerate}
		\item for all $0< \ve < 1$, $\Xi_{\ve} \in \Cc^\infty(\R_v)$,
		\item $0 \leq \Xi_{\ve}(v) \leq 1$ for all $0 < \ve < 1$ and $v \in \R$,
		\item for all $0 < \ve < 1$, there exists some $c_\ve > 0$ such that $\Xi_{\ve}(v) = 0$ if $\dist(v, G) < c_\ve$, and
		\item $\Xi_{\ve}(v) \rightarrow 1_{\R\setminus G}(v)$ for all $v \in \R$ as $\ve \to 0_+$.
	\end{enumerate}
	Repeating our techniques, it is not difficult to verify that $(\int_\R f_n \Xi_\ve \eta\,dv)$ is relatively compact in $L^r(\Om; L_\loc^s(Q))$ for any $0 < \ve < 1$ (here $Q$ may be $\R_t\X\R_x^N$). Therefore, in virtue of Proposition \ref{1.3.prop}, one derives that the original velocity averages $(\int_\R f_n \eta\,dv)_{n \in \N}$ are indeed relatively compact in $L^r(\Om; L_\loc^s(Q))$. 
	
	(Notice that in the preceding argument, it is not necessary to suppose that $\abf(v)$ and $\bbf(v)$ lie in, respectively, $L_\loc^\infty(\R_v; \R^N)$ and $L_\loc^\infty(\R_v; \mathscr{L}(\R^N))$. Generally, $f_n$ has uniformly bounded $L_\om^rL_{t,x,v}^p$--norms for all $1 \leq p \leq \infty$, permitting one to take $p' = 1$.)
\end{remark}

\begin{remark}[Comparison with the work of \textsc{P.L. Lions}, \textsc{B. Perthame} and \textsc{E. Tadmor}]
	Following the previous Remark \ref{Remark.LPT3}, let us continue juxtaposing our results with the classical averaging lemmas of \textsc{P.-L. Lions--B. Perthame--E. Tadmor} \cite{LPT}.
	
	Regarding the differences between our theory and theirs, let us mention this minor one: when $\ell$ was not an integer, they permitted the indices $(k,\a) = (\lfloor \ell \rfloor, \ell - \lfloor \ell \rfloor)$. Alas, this assumption could not be made in our arguments. Indeed, as we have seen, the operator $(-\Delta_v + 1)^{\ell/2}$ acts (via ``integrations by parts'') on the symbol $\LL(i\tau, i\k, v)$, forcing it to be Hölder--regular enough in order to $(-\Delta_v + 1)^{\ell/2}\LL(i\tau, i\k, v)$ to make sense. As a consequence, except when $\ell$ is an integer---which permits $(-\Delta_v + 1)^{\ell/2}$ to be transformed into a regular derivative---, $\abf(v)$ and $\bbf(v)$ need to have the sort of smoothness ``leeway'' we have imposed in \eqref{1.condka}; see, e.g., \textsc{P. R. Stinga} \cite{Sti}. To illustrate this point, notice that the function $\bbf(v) = |v|^{3/2}I_{\R^N}$ belongs to the Hölder class $C_\loc^{1, 1/2}(\R; \mathscr L(\R^N))$, but not to, say, $H_\loc^{3/2}(\R; \mathscr L(\R^N))$.
	
	In spite of this, we should point out that in most applications $\ell$ can be chosen as any number $>1$, hence the negligibility of this inconvenience.
	
	Therefore, having these observations in mind, we conclude that Theorem \ref{1.secondthm} may be understood as an extension of the hyperbolic compactness result of \textsc{Lions--Perthame--Tadmor} if $\bbf(v) \equiv 0$. The case $\bbf(v) \not\equiv 0$ is, however, distinct, for their theorem was stated for general diffusion matrices. Nevertheless, besides requiring $\bbf(v)$ to be smooth, they do not seem to allow a derivative of order higher than one in the forcing terms, which is instrumental for localization procedures---see the proof of Theorem \ref{1.secondthm}. 
	
	Curiously enough, there is one peculiar instance in which we can treat general diffusion matrices, even though this case is of no pertinence to the theory of entropy solutions.
	
	\begin{proposition} \label{1.8.propjunk}
		Let exponents $1 < p, q < \infty$, and $1 \leq r \leq 2$ be given. 
		Let also $\abf \in \Cc(\R; \R^N)$ and $\bbf \in \Cc(\R; \mathscr{L}(\R^N))$, with $\bbf(v)$ being nonnegative for all $v \in \R$.
		
		Assume that, for any $n \in \N$, the equation
		\begin{align*}
		\frac{\del f_n}{\del t} &+ \abf(v) \cdot \nabla_x f_n - \bbf(v) : D_x^2 f_n = ( - \Delta_{t,x} + 1)^{1/2} g_{n}  + ( - \Delta_{x} + 1)^{1/4} \Phi_n \frac{dW}{dt} 
		\end{align*}
		is almost surely obeyed in $\Dd'(\R_t\X\R_x^N\X\R_v)$, where
		\begin{enumerate}
			\item  $(f_n)_{n\in\N}$ is a bounded sequence in $L^r(\Omega; L^p(\R_t\X\R_x^N\X\R_v))$,
			\item  $(g_{n})_{n \in \N}$ is in $L^r(\Omega; L^{q}(\R_t$ $\X\,\,\R_x^N\X\R_v))$, and
			\item  $(\Phi_n)_{n \in \N}$ is a predictable and convergent sequence in $L^2(\Omega\X [0,\infty)_t;$ $HS(\mathscr H;$ $L^2(\R_x^N \X \R_v)))$.
		\end{enumerate}
		
		Finally, let $\eta \in L^{p'}(\R)$ have compact support, and presume that the \textnormal{nondegeneracy condition} \eqref{1.nondeg} holds.
		
		Then, with $s$ being the least number between $p$, $q$, and $2$, for any $\varphi \in  (L^1 \cap L^\infty)(\R_t\X\R_x^N)$, the sequence of averages $\left(\varphi \int_\R f_n\eta \, dv \right)_{n \in \N}$ converges in $L^r(\Om; L^s(\R_t\X\R_x^N))$. 
	\end{proposition}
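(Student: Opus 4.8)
The plan is to run the argument of Theorem \ref{1.firstthm} in a drastically abridged form. Since here $\ell=0$ and the forcing terms carry only the first--order (in the hyperbolic scaling) operators $(-\Delta_{t,x}+1)^{1/2}$ and $(-\Delta_x+1)^{1/4}$, no integration by parts in the velocity is ever required: the mere continuity of $\abf$ and $\bbf$ suffices, the weight $\eta$ need not be mollified, and the only fact about the symbol one must extract is the lower bound $\sqrt{\tau^2+|\k|^2}\le C\,|\LL(i\tau,i\k,v)|$ on the non--degenerate portion of frequency space, which will turn out to be automatic. This is precisely why, in this special situation, a general nonnegative diffusion matrix $\bbf(v)$ is admissible --- and, dually, why the statement carries no claim of criticality.

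Passing to a subsequence, I would assume $g_n\to g$ in $L^r(\Om;L^q(\R_t\X\R_x^N\X\R_v))$ and $\Phi_n\to\Phi$ in $L^2(\Om\X[0,\infty);HS(\mathscr H;L^2(\R_x^N\X\R_v)))$, set $\ff_{m,n}=f_m-f_n$, $\gf_{m,n}=g_m-g_n$, $\Psi_{m,n}=\Phi_m-\Phi_n$, take the $L^\infty$--truncation $\nn_\g$ of $\eta$ from part (a) of Lemma \ref{1.2.1.molleta}, and, for $0<\d,\g<1$, split $\ff_{m,n}=\ff_{m,n}^{(1)}+\ff_{m,n}^{(2)}+\ff_{m,n}^{(3)}$ via
\begin{gather*}
\ff_{m,n}^{(1)}=\FF_{t,x}^{-1}\bigl[\lambda(\tfrac{\sqrt{\tau^2+|\k|^2}}{\g})\,\FF_{t,x}\ff_{m,n}\bigr],\qquad
\ff_{m,n}^{(2)}=\FF_{t,x}^{-1}\bigl[\psi(\tfrac{\sqrt{\tau^2+|\k|^2}}{\g})\lambda(\tfrac{\wt\LL(i\tau,i\k,v)}{\d})\,\FF_{t,x}\ff_{m,n}\bigr],\\
\ff_{m,n}^{(3)}=\FF_{t,x}^{-1}\bigl[\psi(\tfrac{\sqrt{\tau^2+|\k|^2}}{\g})\psi(\tfrac{\wt\LL(i\tau,i\k,v)}{\d})\,\FF_{t,x}\ff_{m,n}\bigr]
\end{gather*}
(no restricted symbol being needed, there is no $\Delta_M$--term), so that $\int_\R\ff_{m,n}\eta\,dv=\int_\R\ff_{m,n}(\eta-\nn_\g)\,dv+\sum_{\nu=1}^3\int_\R\ff_{m,n}^{(\nu)}\nn_\g\,dv$. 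The first summand is $O(\|\eta-\nn_\g\|_{L^{p'}})$ uniformly in $m,n$ by Lemma \ref{1.3.lemma}; the $\ff_{m,n}^{(1)}$--summand is $O(\g^\qq)$ by the kernel estimate of Propositions \ref{1.4.prop0} and \ref{1.4.prop}; and, since $\wt\LL(i\tau,i\k,v)$ has the truncation property uniformly in $v$ (Proposition \ref{1.5.proposition1}, whose proof uses only continuity of $\abf,\bbf$) while \eqref{1.nondeg} forces $\sup_{\tau^2+|\k|^2=1}\meas\{v\in\supp\nn_\g:|\LL(i\tau,i\k,v)|\le\d\}\to0$ as $\d\to0_+$ (Lemma \ref{1.5.lemma2}, now even simpler since $\supp\nn_\g\subset\supp\eta$), the $\ff_{m,n}^{(2)}$--summand obeys the bound $C\,\|\nn_\g\|_{L^\infty}^r\bigl(\sup_{\tau^2+|\k|^2=1}\meas\{v\in\supp\nn_\g:|\LL(i\tau,i\k,v)|\le\d\}\bigr)^{r\pp}$ of Lemma \ref{1.5.lemma1}.

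The substance is the term built from $\ff_{m,n}^{(3)}$. Taking $\FF_{t,x}$ of the equation for $\ff_{m,n}$ and dividing by $\LL(i\tau,i\k,v)$ on the support of $\psi(\wt\LL/\d)$ (trading $(\tau^2+|\k|^2+1)^{1/2}$ for $\sqrt{\tau^2+|\k|^2}$, which is legitimate since $\psi(\sqrt{\tau^2+|\k|^2}/\g)$ vanishes near the origin, at the cost of a harmless modification of $\gf_{m,n}$) exhibits $\ff_{m,n}^{(3)}$ as a deterministic contribution governed by the multiplier $\psi(\tfrac{\sqrt{\tau^2+|\k|^2}}{\g})\psi(\tfrac{\wt\LL(i\tau,i\k,v)}{\d})\tfrac{\sqrt{\tau^2+|\k|^2}}{\LL(i\tau,i\k,v)}$ acting on $\gf_{m,n}$, plus a stochastic contribution governed by the analogous multiplier with $\sqrt{\tau^2+|\k|^2}$ replaced by $(|\k|^2+1)^{1/4}$, acting on $\Psi_{m,n}\,dW/dt$. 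The observation that unlocks everything is that, on $\{|\wt\LL(i\tau,i\k,v)|\ge\d/2\}\cap\{\sqrt{\tau^2+|\k|^2}\ge\g/2\}$, writing $\rho=\sqrt{\tau^2+|\k|^2}$ and $(\tau',\k')$ as in \eqref{2.deftauk'} one has $\LL(i\tau,i\k,v)=i\rho(\tau'+\abf(v)\cdot\k')+\rho^2(\k'\cdot\bbf(v)\k')$, so that $|\wt\LL|\ge\d/2$ forces either $|\tau'+\abf(v)\cdot\k'|\gtrsim\d$, whence $|\LL|\ge|\Im\LL|\gtrsim\d\rho$, or $\k'\cdot\bbf(v)\k'\gtrsim\d$, whence $|\LL|\ge\Re\LL\gtrsim\d\rho^2\ge\tfrac{\g\d}{2}\rho$; in either case $|\LL(i\tau,i\k,v)|\ge\cc(\d,\g)\rho$. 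Granted this, I would, for each fixed $v\in\supp\nn_\g$, invoke Theorem \ref{thm.lizorkin} in the orthonormal basis of $\R_\tau\X\R_\k^N$ formed by $(1,0)$ and by $(0,\phi_1),\dots,(0,\phi_N)$, where $\phi_1,\dots,\phi_N$ diagonalise the symmetric matrix $\bbf(v)$: in those coordinates the anisotropic hypothesis \eqref{lizorkinhyp} reduces to routine bounds, the only not-quite-immediate terms --- those from differentiating $1/\LL$ in a $\phi_j$--direction --- being tamed by $\l_j(v)\,(\k\cdot\phi_j)^2\le\Re\LL\le|\LL|$ together with $\rho\le\cc(\d,\g)^{-1}|\LL|$, with a final Lizorkin constant depending only on $\d,\g$ and on $\sup_{v\in\supp\nn_\g}(|\abf(v)|+\|\bbf(v)\|)$, hence uniform in $v$. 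Thus the deterministic multiplier defines an $L^q(\R_t\X\R_x^N)$--operator of norm uniform in $v\in\supp\nn_\g$, and \eqref{1.3.trivialineq} with $\gf_{m,n}\to0$ gives that the deterministic part of $\int_\R\ff_{m,n}^{(3)}\nn_\g\,dv$ tends to $0$ in $L^r(\Om;L^q)$; while, in the regime $p=2$ (if instead $\Phi_n\equiv0$ there is nothing to prove), the same lower bound yields $|m(\tau,\k,v)|\le C_{\d,\g}\,1_{\{\rho\ge\g/2\}}(1+|\k|)^{1/2}/\rho$ for the stochastic multiplier, and Plancherel in $(t,x)$, the It\^o isometry, Cauchy--Schwarz in $v$, and the explicit integration of $\int_{\rho\ge\g/2}(\tau^2+|\k|^2)^{-1}\,d\tau$ --- exactly as in Lemma \ref{1.7.3.lemma} --- bound the stochastic part by $C\,\bbE\int_0^\infty\|\Psi_{m,n}(t)\|_{HS(\mathscr H;L^2(\R_x^N\X\R_v))}^2\,dt\to0$.

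Assembling the estimates and using that multiplication by $\varphi\in L^1\cap L^\infty$ converts the $L^q$-- and $L^2$--bounds into $L^s$--ones by H\"older, I obtain
\[
\limsup_{m,n\to\infty}\bbE\Bigl\|\varphi\int_\R(f_m-f_n)\eta\,dv\Bigr\|_{L^s(\R_t\X\R_x^N)}^r\le C\|\eta-\nn_\g\|_{L^{p'}}^r+C\g^\qq+C\|\nn_\g\|_{L^\infty}^r\bigl(\sup_{\tau^2+|\k|^2=1}\meas\{v\in\supp\nn_\g:|\LL(i\tau,i\k,v)|\le\d\}\bigr)^{r\pp};
\]
letting $\d\to0_+$ with $\g$ frozen annihilates the last summand, and then $\g\to0_+$ annihilates the first two, so the averages are Cauchy in $L^r(\Om;L^s(\R_t\X\R_x^N))$ and hence converge. \textbf{The main obstacle} is the uniform--in--$v$ verification of the Lizorkin condition for $\psi(\tfrac{\sqrt{\tau^2+|\k|^2}}{\g})\psi(\tfrac{\wt\LL(i\tau,i\k,v)}{\d})\tfrac{\sqrt{\tau^2+|\k|^2}}{\LL(i\tau,i\k,v)}$ when $\LL$ is genuinely inhomogeneous and, as $v$ varies, its elliptic part is oriented in $v$--dependent directions; once the lower bound $|\LL|\ge\cc(\d,\g)\rho$ is secured, the remainder is a faithful transcription of the lemmas already proved for Theorem \ref{1.firstthm}.
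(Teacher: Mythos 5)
Your proof is correct and follows the same skeleton as the paper's, with two small technical deviations that are worth flagging because they are interesting rather than problematic. First, you measure degeneracy with the normalized symbol $\wt\LL(i\tau,i\k,v)/\d$, exactly as in the proof of Theorem \ref{1.firstthm}, whereas the paper's own sketch for this proposition replaces it by $\LL(i\tau,i\k,v)/(\d\sqrt{\tau^2+|\k|^2})$. The paper's choice makes the key lower bound immediate on the non--degenerate set ($|\LL|\geq\d\rho/2$, $\rho=\sqrt{\tau^2+|\k|^2}$), while yours needs the case split you perform (hyperbolic part $\gtrsim\d$ or parabolic part $\gtrsim\d$) to reach $|\LL|\geq\cc(\d,\g)\rho$; in exchange, your cutoff translates to the unit sphere cleanly and yields the measure set $\{|\LL(i\tau,i\k,v)|\leq\d\}$ rather than the paper's $\{|\LL(i\tau,i\k,v)|\leq 2\d/\g\}$ --- but since $\d\to0_+$ precedes $\g\to0_+$, both are equally serviceable. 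Second, you drop the mollification $\eta_{\d,\g}=\varrho_\d\star\nn_\g$ that the paper keeps for notational uniformity and work directly with the truncation $\nn_\g$; this is legitimate because $\ell=0$ means no integration by parts in $v$ is ever performed and the weight enters only as a bounded multiplicative factor with compact support inside $\supp\eta$ (which also simplifies the Lemma \ref{1.5.lemma2}--type convergence, as you note). Your Lizorkin verification in the $\bbf(v)$--diagonalizing basis is the right way to treat the inhomogeneous symbol $\psi(\wt\LL/\d)\psi(\rho/\g)\rho/\LL$ uniformly in $v$, and the bookkeeping you describe --- taming the parabolic-derivative contributions via $\l_j(v)\k_j^2\le\Re\LL\le|\LL|$ and $\rho\lesssim_{\d,\g}|\LL|$ --- is exactly what is needed. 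One tiny inaccuracy: your parenthetical ``in the regime $p=2$'' for the stochastic piece is unnecessary, since the $L^2$--estimate from Lemma \ref{1.7.3.lemma} is converted into the $L^s$ estimate ($s=\min(p,q,2)$) by H\"older through $\varphi\in L^1\cap L^\infty$ regardless of $p$, as you yourself observe in the assembling paragraph.
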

	\begin{proof}[Sketch of the proof]
		Let us  keep the notations of the proof of Theorems \ref{1.firstparthm} and \ref{1.secondparthm}. If $\eta_{\d,\g}$ is the same as then, define now the decomposition
		\begin{equation*}
		\begin{dcases}
		\ff_{m,n}^{(1)} = \FF_{t, x}^{-1} \bigg[ \lambda\bigg( \frac{\sqrt{\tau^2 + |\k|^2}}{\g} \bigg) (\FF_{t, x}\ff_{m,n}) \bigg], \\
		\ff_{m,n}^{(2)} = \FF_{t, x}^{-1} \bigg[ \psi\bigg( \frac{\sqrt{\tau^2 + |\k|^2}}{\g} \bigg)\lambda\bigg( \frac{\LL(i\tau, i\k, v)}{\d\sqrt{\tau^2 + |\k|^2}} \bigg) (\FF_{t, x}\ff_{m,n}) \bigg], \text{ and} \\ 
		\ff_{m,n}^{(3)} = \FF_{t, x}^{-1} \bigg[ \psi\bigg( \frac{\sqrt{\tau^2 + |\k|^2}}{\g} \bigg)\psi\bigg( \frac{\LL(i\tau, i\k, v)}{\d\sqrt{\tau^2 + |\k|^2}} \bigg) (\FF_{t, x}\ff_{m,n}) \bigg],
		\end{dcases}
		\end{equation*}
		and write
		\begin{align*}
		\int_\R \ff_{m,n} \eta\, dv &= \int_\R \ff_{m,n}(\eta - \eta_{\d,\g})\, dv + \sum_{\nu=1}^3 \int_\R \ff_{m,n}^{(\nu)} \eta_{\d,\g}\,dv \stackrel{\text{def}}{=} \sum_{\nu=0}^3 \vf_{m,n}^{(\nu)}.
		\end{align*}
		The only term which needs some explanation is evidently $\ff_{m,n}^{(2)}$. Based on our techniques, it is not hard to see that $\LL(i\tau, i\k, v)/\sqrt{\tau^2 + |\k|^2}$ satisfies the truncation property uniformly on $v$. Moreover, it is not hard to see that
		\begin{align*}
		\bbE\Vert \vf_{m,n}^{(2)} \Vert_{L^p(\R_t\X\R_x^N)}^r \leq C \Vert &\eta_{\d,\g} \Vert_{L^\infty(\R)}^r \bbE \Vert \ff_{m,n} \Vert_{L^p(\R_t\X\R_x^N\X\R_v)}^r\nonumber\\& \bigg( \sup_{\tau^2 + |\k|^2 = 1} \meas\bigg\{ v \in \supp \eta_{\d, \g};  |\LL(i\tau, i\k, v)| \leq \frac{2\d}{\g}\bigg\} \bigg)^{r\pp}, 
		\end{align*}
		where $C$ and $\pp > 0$ are independent of $m$ and $n$. Since we pass $\d$ to zero prior to applying the same limit to $\g$, the factor $2\d/\g$ brings no hindrances.
		
		Furthermore, it is not hard to see that
		\begin{align*}
		\bigg|\psi\bigg( \frac{\LL(i \tau, i\k, v)}{\d\sqrt{\tau^2 + |\k|^2}} \bigg) &\psi\bigg( \frac{\sqrt{\tau^2 + |\k|^2}}{\g} \bigg)\frac{\sqrt{\tau^2 + |\k|^2}}{\LL(i\tau, i\k, v)}\bigg| \nonumber \\
		&\leq \frac{2}{\d} \text{ for all $(\tau, \k,v) \in \left( \R\X\R^N \setminus\{ 0 \}  \right)  \X \R_v$}.
		\end{align*}
		Hence, the proposition may be demonstrated following the same lines of the proof of Theorem \ref{1.firstthm}.
	\end{proof}
	
	The majority the remarks of this section also applies to Proposition \ref{1.8.propjunk}; for instance, a local theorem is available if one assumes that the same hypothesis (a) of Theorem \ref{1.secondthm} with $\ell = 0$. Notwithstanding, let us stress that the argument above is not valid for Equation \eqref{1.eqf} if $\ell > 0$, as we have discoursed in Subsection \ref{0.example}.
\end{remark}

\begin{remark}[In comparison with the work of \textsc{E. Tadmor} and \textsc{T. Tao}, and of \textsc{B. Gess} and \textsc{M. Hofmanová}] Even though the averaging lemmas of \textsc{E. Tadmor--T. Tao} \cite{TT} and \textsc{B. Gess--M. Hofmanová} \cite{GH} deal with the Sobolev regularity of the averages and hence are of different kind than ours, in several situations these type of result is used in the same context: to corroborate the existence of kinetic solutions to nonlinear degenerate convection--diffusion equations. It is thus interesting to contrast our theory with theirs.
	
	Well-understood, the crux of our argument is the regularizing effects of the Fourier quotient  $\frac{1}{\LL(i\tau, i\k, v)}$. In contrast, as we have commented in the Introduction, theirs was founded on dyadic decompositions and some uniform rates on the quantities $\om(J; \d)$ expressed in \eqref{0.2}.
	Hence, their method treats both the degree of $\LL(i\tau, i\k, v)$ and its behavior (parabolic or hyperbolic) quite indirectly and more abstractly. Even though this leads to a theorem enunciated in more broad terms, not only are their conditions much more arduous to be verified, but also all concrete examples provided by both works are also valid in our setting. 
	
	A particular and fascinating attribute of work of \textsc{B. Gess--M. Hofmanová} \cite{GH} is that, under some conditions on $\abf(v)$ and $\bbf(v)$, they could let the weight function $\eta$ not possess compact support, which seems to be a quite unprecedented assumption in the theory of the velocity averaging lemmas. Furthermore, they did not assume any Hölder regularity on $\abf(v)$ and $\bbf(v)$ (nevertheless, one usually employs some Hölder regularity in order to investigate \eqref{0.2}).
	
	Anyhow, it remains an intriguing conjecture to verify if the nontransient condition is somehow implicit in their hypotheses, or, conversely, if it is essential at all.
	
	A more tangible fashion to pose this conjecture is as follows. Like in Subsection \ref{0.example}, put $N =1$, let $\bbf \in \Cc_c^\infty(0,1)$ be a nonnegative function vanishing exactly in a Cantor set of positive measure in $[0,1]$, and define $\LL : \R_\tau \X \R_\k \X \R_v \to \C$ by
	$\LL(i\tau, i\k, v) = i(\tau + v \k) + \bbf(v) \k^2.$
	Evidently, $\bbf(v)$ does not obey the nontransient condition in $[0,1]$, consequently our theorem does apply to this particular symbol. Do, however, the hypotheses of \textsc{Tadmor--Tao} or \textsc{Gess--Hofmanová} apply? (Notice, since $\bbf(v)$ vanishes at infinite order in this Cantor set, it is not clear how to reproduce the analysis featured in section 4.2 of \cite{TT}; neither seems their condition (2.20) easily verifiable). If not, can an averaging lemma like Theorem \ref{1.firstthm} still be proven to this symbol?
\end{remark}
\textbf{Acknowledgments.} The author has been supported by CNPq Grant 140600/2017-5.


\begin{thebibliography}{999}

\bibitem{A} V.I.~Agoshkov. {\sl Spaces of functions with differential-difference characteristics and smoothness of solutions of the transport equation}. Soviet Math. Dokl. {\bf 29} (1984), 662--666. 

\bibitem{AL} D.~Arsénio, N.~Lerner. {\sl An energy method for averaging lemmas}. Available at https://arxiv.org/abs/2006.16058v1.

\bibitem{AG} N.~Ayi, T.~Goudon. {\sl Regularity of velocity averages for transport equations on random discrete velocity grids}. Anal.\ PDE, {\bf 10} (2017), no.~5, 1201--1225.

\bibitem{BGPS} C.~Bardos, F.~Golse, B.~Perthame, R.~Sentis. {\sl The nonaccretive radiative transfer equations. Existence of solutions and Rosseland approximation}, J.\ Funct.\ Anal. {\bf77} (1988), 434--460.

\bibitem{BJ} F.~Berthelin, S.~Junca. {\sl Averaging lemmas with a force term in the transport equation}, J.\ Math.\ Pures Appl.\ {\bf93} (2010), 113--131.

\bibitem{B} M.~Bézard. {\sl Régularité précisée des moyennes dans les équations de transport}, (Bull.\ Soc.\ Math.\ France, {\bf122} (1994), 29--76.

\bibitem{BD} F.~Bouchut, L.~Desvillettes. {\sl Averaging lemmas without time Fourier transform and application to discretized kinetic equations}, Proc.\ Roy.\ Soc.\ Edinb.\ {\bf129A} (1999), 19--36.

\bibitem{BEK} R.~B\"urger, S.~Evje, K.H.~Karlsen. {\sl On strongly degenerate convection-diffusion problems modeling sedimentation-consolidation processes}. J.\ Math.\ Anal.\ Appl.\   {\bf 247} (2000), 517--556.

\bibitem{BFK} R.~B\"urger, H.~Frid,  K.H.~Karlsen. {\sl On the well-posedness of entropy solutions to conservation laws with a zero-flux boundary condition}. J.\ Math.\ Anal.\ Appl.\   {\bf 326} (2007), 108--120.

\bibitem{BK} R.~B\"urger, K.H.~Karlsen. {\sl A Strongly Degenerate Convection-diffusion Problem Modeling Centrifugation of Flocculated Suspensions}. In: Freistühler H., Warnecke G. (eds) Hyperbolic Problems: Theory, Numerics, Applications. International Series of Numerical Mathematics, vol 140. Birkhäuser, Basel.

\bibitem{BCBT} M.C.~Bustos, F.~Concha, R.~Bürger, E.M.~Tory. {\sl Sedimentation and Thickening: Phenomenological Foundation and Mathematical Theory}. Kluwer Academic, Dordrecht, 1999.

\bibitem{CZ} A.P.~Calderón, A.~Zygmund. {\sl Local properties of solutions of elliptic partial differential equations}. Studia Math. {\bf20} (1961), 171--225.

\bibitem{C} J.~Carrillo. {\sl Entropy solutions for nonlinear degenerate problems}. Arch. Ration. Mech. Anal. {\bf147} (1999), 269--361.

\bibitem{CJ} G.~Chavent, J. Jaffre. {\sl Mathematical Models and Finite Elements for Reservoir Simulation}. North Holland, Amsterdam, 1986.

\bibitem{CF0'} G.-Q.~Chen, H.~Frid. {\sl Large-time behavior of entropy solutions in $L^\infty$ for multidimensional conservation laws}. Advances in Nonlinear Partial Differential Equations and Related Areas (1998), 28--44.

\bibitem{CF0}  G.-Q.~Chen, H.~Frid.  {\sl Decay of Entropy Solutions of Nonlinear Conservation Laws}. Arch.\ Ration.\ Mech.\ Anal.\ {\bf146} (1999), 95--127.



\bibitem{CP} G.-Q.~Chen, B.~Perthame.  {\sl  Well-posedness for non-isotropic degenerate parabolic-hyperbolic equations}. Ann.\ I.\ H.\ Poincar\'e, {\bf20} (2003), 645--668.

\bibitem{CP1} G.-Q.~Chen, B.~Perthame.  {\sl  Large-Time Behavior of Periodic Entropy Solutions to Anisotropic Degenerate Parabolic-Hyperbolic Equations}. Proc.\ Amer.\ Math.\ Soc.\, {\bf137} (2009), 3003--3011.

\bibitem{DHV}  A.~Debussche, M.~Hofmanov\'a, J.~Vovelle. {\sl Degenerate parabolic stochastic partial differential equations: quasilinear case}. Ann. of Prob. {\bf 44}, no.~3 (2016), 1916--1955.

\bibitem{DM} L.~Desvillettes, S.~Mischler. {\sl About the splitting algorithms for Boltzmann and BGK equations}, Math.\ Model.\ Meth.\ Appl.\ Sci.\ {\bf6} (1996), 1079--1101.

\bibitem{DL1} R.J.~DiPerna, P.-L.~Lions. {\sl On the Cauchy problem for the Boltzmann Equation: global existence and weak stability results}. Ann. Math. {\bf130} (1989), 321--366.

\bibitem{DL2} R.J.~DiPerna, P.-L.~Lions. {\sl Global weak solutions of Vlasov--Maxwell systems}. Comm.\ Pure Appl.\ Math.\, {\bf42} (1989), 729--757.

\bibitem{DLM} R.J.~DiPerna, P.-L.~Lions, Y.~Meyer. {\sl $L^p$ regularity of velocity averages}. Ann.\ I.\ H.\ P.\, Analyse non-linéaire {\bf8} (1991), 271--287.

\bibitem{DP} R.~DeVore, G.~Petrova. {\sl $L^p$ The averaging lemma}. J.\ Amer.\ Math.\ Soc.\, {\bf14} (2001), no.~2, 279--296.

\bibitem{DV} A.~Debussche,  J.~Vovelle. {\sl Scalar conservation laws with stochastic forcing}.  J. Funct. Anal. {\bf259} (2010), 1014--1042. Revised version available at https://arxiv.org/abs/1001.5415.

\bibitem{FL} H.~Frid, Y.~Li. {\sl A boundary value problem for a class of anisotropic degenerate parabolic-hyperbolic equations}.  Arch.\ Ration.\  Mech.\  Anal.\ {\bf 226} (2017), no.~3, 975--1008. Revised version available at http://arxiv.org/abs/1606.05795.

\bibitem{FL1} H.~Frid, Y.~Li., D.~Marroquin, J.F.C.~Nariyoshi, Z.~Zeng. {\sl The Neumann problem for stochastic conservation laws}. Submitted.

\bibitem{FL2} H.~Frid, Y.~Li., D.~Marroquin, J.F.C.~Nariyoshi, Z.~Zeng. {\sl A Boundary Value Problem for a Class of Anisotropic Stochastic Degenerate Parabolic-Hyperbolic Equations}. In preparation.

\bibitem{G1} P.~Gérard. {\sl Moyennisation et régularité deux-microlocale}. Ann. Sci. École Norm. Sup.  {\bf1} (1990), 89--121.

\bibitem{G2} P.~Gérard. {\sl Microlocal defect measures}. Comm. in Partial Differential
Equations,  {\bf16} (1991), no.~11, 1761--1794.

\bibitem{GG} P.~Gérard, F.~Golse {\sl Averaging regularity results for PDEs under transversality assumptions}. Comm. Pure Appl. Math,  {\bf45} (1992), no.~1, 1--26.

\bibitem{Ge} B.~Gess. {\sl Optimal regularity for the porous medium equation}. To appear in J.\ Eur.\ Math.\ Soc.

\bibitem{GH} B.~Gess, M.~Hofmanov\'a. {\sl Well-posedness and regularity for quasilinear degenerate parabolic-hyperbolic SPDE}. Ann. of Prob., {\bf46} (2018), no.~5, 2495--2544.

\bibitem{GL} B.~Gess, X. Lamy. {\sl Regularity of solutions to scalar conservation laws with a force}. Ann. Inst. H. Poincaré Anal. Non Linéaire,  {\bf36} (2019),  no.~2, 505--521.

\bibitem{GST} B.~Gess, J.~Sauer, E.~Tadmor. {\sl Optimal regularity in time and space for the porous medium equation}. To appear in Anal. PDE.

\bibitem{GOh} L.~Grafakos, S.~Oh. {\sl The Kato--Ponce Inequality}. Comm.\ PDE, {\bf39} (2014), 1128--1157.

\bibitem{GLPS} F.~Golse, P.-L.~Lions, B.~Perthame, R.~Sentis. {\sl Regularity of the moments of the solution of a transport equation}. J.\ Funct. Anal.\, {\bf76} (1988), 110--125.

\bibitem{GSR1} F.~Golse, L.~Saint-Raymond. {\sl Velocity averaging in $L^1$ for the transport equation}. C.\ R.\ Math.\ Acad.\ Sci.\ Paris, {\bf334} (2002), no.~7, 557--562.

\bibitem{GSR2} F.~Golse, L.~Saint-Raymond. {\sl The Navier–Stokes limit of the Boltzmann equation for bounded collision kernels}. Invent. Math., {\bf155} (2004), 81--161.


\bibitem{HHN} R.~Haller, H.~Heck, A.~Noll. {\sl Mikhlin’s theorem for operator-valued multipliers in $n$-variables}. Math. Nachr., {\bf244} (2002), 110--130.

\bibitem{HMV} T.~Horsin, S.~Mischler, A.~Vasseur. {\sl On the convergence of numerical schemes for the Boltzmann equation}. Ann. Inst. H. Poincaré Anal. Non Linéaire, {\bf20} (2003), 731--758.

\bibitem{JLT} P.-E.~Jabin, H.Y.~Lin, E.~Tadmor. {\sl Commutator Method for Averaging Lemmas}. Available at https://arxiv.org/abs/2003.05047.

\bibitem{JP} P.-E.~Jabin, B.~Perthame. {\sl Regularity in kinetic formulations via averaging lemmas}. ESAIM Control Optim.\ Calc.\ Var.\, {\bf8} (2002), 761--774.

\bibitem{JV1} P.-E.~Jabin, L.~Vega. {\sl Averaging lemmas and the X-ray transform}. C.\ R.\ Math.\ Acad.\ Sci.\ Paris, {\bf337} (2003), 505--510.

\bibitem{JV2} P.-E.~Jabin, L.~Vega. {\sl A real space method for averaging lemmas}. J.\ Math.\ Pures Appl.\, {\bf11} (2004), 1309--1351.



\bibitem{K} S.N.~Kruzkov. {\sl First order quasilinear equations with several independent variables}. Mat. Sb. (N.S.) {\bf81} (1970), 228--255.

\bibitem{KV} Y.-S.~Kwon, A.~Vasseur. {\sl Strong Traces for Solutions to Scalar Conservation Laws with General Flux},  Arch.\ Ration.\  Mech.\  Anal.\ \textbf{185} (2007), 495--513.

\bibitem{KW} P.C.~Kunstmann, L.~Weis. {\sl Maximal $L_p$-regularity for parabolic equations, Fourier multiplier theorems and $H^\infty$--functional calculus}, in: Functional Analytic Methods for Evolution Equations (2004), 65--311.

\bibitem{LM} M.~Lazar, D.~Mitrović. {\sl The velocity averaging for a heterogeneous heat type equation}, Math.\ Commun.\ {\bf16} (2011), 271--282.

\bibitem{Li} P.-L.~Lions. {\sl Regularité optimale des moyennes en vitesses}, C.\ R.\ Acad.\ Sci.\ Paris.\, Série I, {\bf320} (1995), 911--915.

\bibitem{LPT} P.-L.~Lions, B.~Perthame, E.~Tadmor. {\sl A kinetic formulation of multidimensional scalar conservation laws and related equations}, J.\ Amer.\ Math.\ Soc.\ \textbf{7} (1994), 169--191.

\bibitem{L} P.I.~Lizorkin. {\sl $(L_p,L_q)$--multipliers of Fourier integrals}, Dokl.\ Akad.\ Nauk SSSR \textbf{152}:4 (1963), 808–811.

\bibitem{M} S.~Mischler. {\sl Convergence of Discrete-Velocity Schemes for the Boltzmann Equation}. Arch.\ Rational.\ Mech.\, {\bf140} (1997), 53--77.

\bibitem{N1} J.F.~Nariyoshi. {\sl The zero--flux problem for stochastic conservation laws}. In preparation.

\bibitem{N2} J.F.~Nariyoshi. {\sl The relative compactness of entropy solutions to degenerate parabolic--hyperbolic equations}. In preparation.

\bibitem{PS} B.~Perthame, P.E.~Souganidis, {\sl A limiting case for velocity averaging}, Ann.\ sci.\ Éc.\ Norm.\ Sup.\ \textbf{4}(T.31) (1998), 591--598.

\bibitem{P} P.E.~Protter. {\sl Stochastic integration and differential equations}.  Springer-Verlag, 2nd ed,  2004.

\bibitem{SR} L.~Saint-Raymond. {\sl From the BGK model to the Navier--Stokes}. Ann. Scient. Éc. Norm. Sup., $4^{\text e}$ série, t. 36, 2003, 271--317.

\bibitem{Stn} E.M.~Stein. {\sl Singular integrals and differentiability properties of functions}.  Princeton University Press,  1970.

\bibitem{SW} E.M.~Stein, G. Weiss. {\sl Introduction to Fourier Analysis on Euclidean Spaces}.  Princeton University Press,  1971.

\bibitem{Sti} P.R.~Stinga. {\sl User's guide to the fractional Laplacian and the method of semigroups}, in Handbook of Fractional Calculus and its Applications, vol.2, 2018.


\bibitem{TT} E.~Tadmor, T.~Tao. {\sl Velocity averaging, kinetic formulations, and regularizing effects in quasi-linear PDEs}. Comm.\ Pure Appl.\ Math.\ {\bf LX} (2007), 1488--1521.

\bibitem{Va} A.~Vasseur. {\sl Strong traces for solutions of multidimensional scalar conservation laws}. Arch.\ Ration.\  Mech.\  Anal.\  {\bf160} (2001) 181--193.

\bibitem{Vz} J.L. Vázquez. {\sl The Porous Medium Equation. Mathematical Theory}.  Oxford University Press, Oxford, 2007.

\bibitem{W} M.~Westdickenberg. {\sl Some new velocity averaging results}. SIAM J. Math. Anal. {\bf 33} (2002), no.~5, 1007--1032.

\bibitem{Z} F.~Zimmermann. {\sl On vector-valued Fourier multiplier theorems}. Studia Math. {\bf 93} (1989), no.~3, 201--222.

\end{thebibliography}
\end{document}